\theoremstyle{plain}
\newtheorem{theorem}{Theorem}[section]
\newtheorem{lemma}[theorem]{Lemma}
\newtheorem{proposition}[theorem]{Proposition}
\newtheorem{corollary}[theorem]{Corollary}
\newtheorem{definition}[theorem]{Definition}
\theoremstyle{remark}
\newtheorem{remark}[theorem]{Remark}
\newtheorem{example}[theorem]{Example}
\newcommand{\oeis}[1]{\href{http://oeis.org/#1}{#1}}
\newcommand{\dd}{\mathtt d}
\newcommand{\uu}{\mathtt u}
\newcommand{\hh}{\mathtt h}
\newcommand{\Bc}{\mathcal{B}}
\newcommand{\Dc}{\mathcal{D}}
\newcommand{\Pc}{\mathcal{P}}
\newcommand{\Sc}{\mathcal{S}}
\tikzstyle{punkt}=[rectangle, rounded corners, draw=black, very thick, text centered]
\begin{document}
\baselineskip=17pt
\DeclarePairedDelimiter\floor{\lfloor}{\rfloor}

\title[Bijections and congruences involving lattice paths and integer compositions]{Bijections and congruences involving \\ lattice paths and integer compositions}
   \author[M.\ Ghosh Dastidar]{Manosij Ghosh Dastidar}
    
    \author[M.\ Wallner]{Michael Wallner}
    
    \email{manosij.dastidar@tuwien.ac.at, michael.wallner@tuwien.ac.at} 
 \address{TU Wien, Institute of Discrete Mathematics and Geometry, Wiedner Hauptstr.\ 8--10, 1040 Vienna, Austria}

\begin{abstract}
We prove new bijections between different variants of Dyck paths and integer compositions, which give combinatorial explanations of their simple counting formula $4^{n-1}$. 
These give relations between different statistics, such as the number of crossings of the $x$-axis in classes of Dyck bridges or the distribution of peaks in classes of Dyck paths, and furthermore relate them with $k$- and $g$-compositions.  
These allow us to find and prove congruence results for Dyck paths and parity results for compositions. Our investigation uncovers unexpected connections to mock theta functions, Hardinian arrays, little Schröder paths, Fibonacci numbers, and irreducible pairs of compositions, offering new insights into the structures of paths, partitions and compositions.
\vspace{0.3 cm}\\
\noindent \textbf{Keywords:} Integer compositions, lattice paths, Dyck paths, Schröder paths, bijections, congruences for Dyck paths
\end{abstract}   

\maketitle


\section{Introduction}
This paper aims to establish a comprehensive framework connecting the fields of lattice paths, integer compositions, integer partitions and related combinatorial structures. Our investigation is motivated by the rich interplay between these distinct areas and the intriguing bijections and properties that link them. Therefore we start by defining the key concepts of the paper. 
A \emph{Dyck path} is a lattice path in $\mathbb{Z}^ 2$ from $(0,0)$ to $(2n,0) $ with up steps $\uu= (1,1)$ and down steps $\dd=(1,-1)$ that never crosses the $x$-axis. 
Dyck paths are ubiquitous in combinatorics and famously enumerated by the \textit{Catalan numbers}~$\frac{1}{n+1}\binom{2n}{n}$; see, e.g.,~\cite{Stanley2015Catalan}.
An \emph{integer partition} is a representations of a positive integer $n$ as a sum of positive integers, where different orders of the summands are not considered to be distinct. 
Similarly, an \emph{integer composition} is a representation of a positive integer $n$ as a sum of positive integers but in this case the different orders of the summands are considered to be distinct. 

Over the course of history many other structures have been defined to understand the world of paths, partitions and compositions. We provide a few of them here to aid the understanding of the subsequent sections of this paper:
\begin{enumerate}
    \item A \emph{Dyck bridge} is constructed when we relax the spatial restriction on Dyck paths such that it is allowed to traverse below the $x$-axis but must still end on the $x$-axis. 
    Historically, Dyck bridges are also sometimes called \emph{Grand Dyck paths}.
    \item A \emph{Dyck walk} is constructed when we further relax the condition of a bridge terminating on the $x$-axis, i.e., it is completely unconstrained and may end at any altitude. 
    \item A \emph{strict left-to-right maximum} is any peak ($\uu\dd$) that has a greater height than all peaks to its left. Similarly, a \emph{weak left-to-right maximum} is a peak that is greater than or equal to in height with all peaks to its left. 
    Note that all left-to-right maxima in this paper will be strict.
    \item A \emph{pair of compositions} of $n$ is an ordered pair $((a_1,\dots,a_k),(b_1,\dots,b_{\ell}))$ such that $n = a_1 + \dots + a_k =  b_1 + \dots + b_{\ell}$ are two compositions of $n$.
    \item A \emph{$k$-composition} of $n$, as introduced by Andrews~\cite{andrews2007theory}, is a composition of $n$ using parts made of $k$ distinct colors such that the last part is of the first color. 
\end{enumerate}

\begin{figure}[ht]
\begin{center}
\scalebox{0.52}{%
\newcommand{\myverticaldelta}{5mm}
    \begin{tikzpicture}[
        node distance = 3.0cm,
        auto,
        ->,
        >=latex',
        font=\LARGE,
        line width=0.8mm,
        block/.style = {circle, draw, minimum size=2.5cm, align=center},
    ]
    
        \node[block] (n1) {Pairs of \\compositions \\of $n$};
        \node[block, above left=\myverticaldelta and 2cm of n1] (n2) {$3$-compositions \\of $n$
        \cite{andrews2007theory}};
        \node[block, below left=\myverticaldelta and 2cm of n1] (n3) {$g$-compositions \\of $n$
        \cite{OuvryPolychronakos2019Exclusion, HopkinsOuvry2021Multicompositions}};
        \node[block, above right=\myverticaldelta and 2cm of n1] (n4) {Unconstrained \\Dyck walks \\ of length $2n-2$};
        \node[block, below right=\myverticaldelta and 2cm of n1] (n5) {$2$-colored \\Dyck bridges of \\ length $2n-2$ };
        \node[block, right=3cm of n4] (n6) {Dyck paths with \\height-labelled peaks \\of length $2n$};
        \node[block, right=3cm of n5] (n7) {Left-to-right maxima \\ in Dyck bridges \\ of length $2n$};
    
        \draw[<->] (n1) -- node[sloped, above, pos=0.5, font=\small]{\textit{Prop.~\ref{prop:threecompbijection}}} (n2);
   
        \draw[<->] (n1) -- node[sloped, above, pos=0.5, font=\small]{\textit{Prop.~\ref{prop:CompPairsDyckWalk}}} (n4);
        \draw[<->] (n2) -- node [sloped, above, pos=0.5]{\textit{Prop.~\ref{prop:KandG}}} (n3);
        \draw[<->] (n4) -- node[sloped, above, pos=0.5]{\textit{Prop.~\ref{prop:2colunconstrained}}}(n5);
        \draw[<->] (n5) -- node [sloped, above, pos=0.5]{\textit{Prop.~\ref{prop:markedlefttoright2coloredbridge}}} (n7);
        \draw[<->] (n6) -- node[sloped, above, pos=0.5, font=\small]{\textit{Prop.~\ref{prop:LabeltoMaxima}}} (n7);
            
    \end{tikzpicture}}
\end{center}
\caption{Bijections proved in this paper of classes of paths and compositions, all enumerated by $4^{n-1}$.}
\label{fig:bijections}
\end{figure}
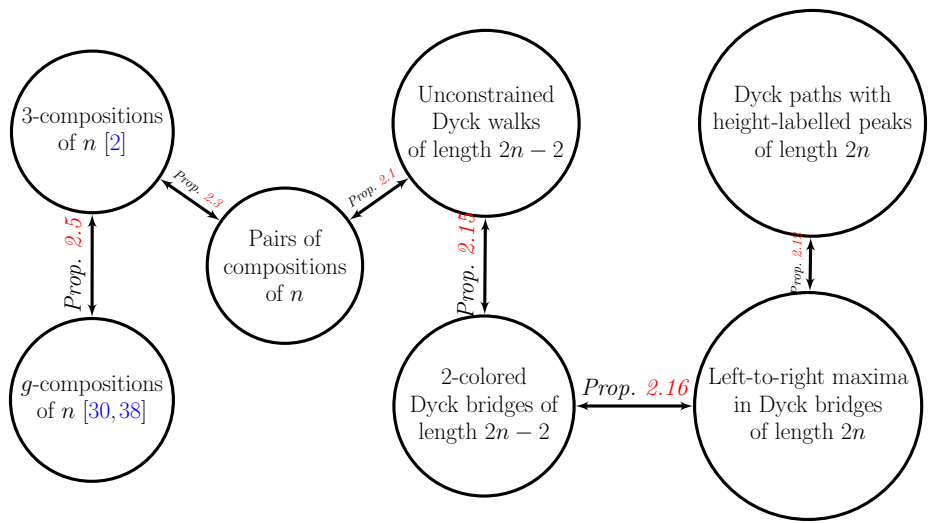

This paper is organized in three parts. 
In the first part, we prove a series of bijections visualized in Figure~\ref{fig:bijections}, linking the world of compositions and lattice paths by their enumeration formula $4^{n-1}$. 
We start
in Section~\ref{sec:bijectionscompositions}, where we give natural bijections between pairs of compositions, Andrews' $3$-compositions, $g$-compositions (a concept from quantum physics), and Dyck walks.  
We further explore bijections between Hardinian arrays and pairs of compositions with equal first parts. 
These bijections bring to light the commonalities between these usually disparate combinatorial objects. 
We continue in Section~\ref{sec:PeaksAndPaths} with the study of peaks and crossings of the $x$-axis in Dyck bridges. 
We present bijections between several classes of Dyck paths with specific peak properties, among which many are also in bijection with the compositions studied in Section~\ref{sec:bijectionscompositions}. 
Our results extend to the enumeration of Dyck bridges in restricted settings, highlighting the versatility of Dyck paths in combinatorial enumerations.
In Section~\ref{sec:compandDyckstrip}, we then derive various links between compositions and paths in a strip. 

In the second part, we focus on congruences in classes of Dyck paths.
Having established a series of bijections between walks and compositions raises the question of whether there are similar underlying structures at play here. For this reason, we ask the questions of whether famous results in the theory of partitions or compositions have analogous results in the theory of lattice paths. We are pleased to answer this question in the affirmative. 
In particular, in Section~\ref{sec:congruences}, we introduce the concept of a peak profile in Definition~\ref{def:peakprofile}, which is a statistic based on the number of peaks of a given height. 
This leads us to creating congruence relations for Dyck paths analogous to the celebrated congruence properties that have existed for the partition function. 
Our main result is Theorem~\ref{theo:rplus1div} based on a divisibility property of Dyck paths with exactly $r$ peaks per reached height.
This analysis also leads us to unexpected connections with little Schr\"oder paths and plane trees.

In the third part, we generalize the key concepts like peaks in Dyck paths and restrictions on parts in compositions.
In Section~\ref{sec:restrictedsummits}, we generalize peaks to summits, which may also occur at the beginning and the end of paths, and analyze them in Dyck bridges together with certain restrictions. 
We show how they are connected to other well-known objects like the Fibonacci numbers. 
In Section~\ref{sec:firstpart}, we consider limitations on the size of the first, greatest and smallest peaks or parts impact the overall nature of paths and partitions (or compositions). Euler's famous pentagonal number theorem motivated Fine to find a curiously analogous theorem for partitions into distinct parts where the first part has parity either odd or even. Extending this result to integer compositions we show in Theorem~\ref{theo:ConminusCenandTriangle} a surprising link back to closed walks. 
Inspired by these results we ask similar questions in the context of lattice paths and we discover an unexpected connection to integer compositions again. 

\section{Bijections between compositiosn and lattice paths}

\subsection{Bijections involving compositions and arrays}
\label{sec:bijectionscompositions}

There are instances in literature where connections between walks and compositions (including pairs of compositions) have been studied in detail. 
Bender, Lawler, Pemantle, and Wilf~\cite{bender2003irreducible} studied pairs of compositions in the context of probability of a first return to the origin of a random walk,
while B\'ona and Knopfmacher \cite{bona2010probability} have looked into the probability that pairs of compositions with part sizes $a$ and $b$ have the same length (i.e., number of parts) and in that context they have established bijections between these objects and weighted lattice paths. 
Dunkl \cite{dunkl2007hook} has given relations between pairs of compositions and hook-lengths in the modified Ferrer's diagram which could also be connected to walks via relations to adjusted Young's tableaux. 

The following straightforward bijection will serve as the link between lattice paths and compositions.

\pagebreak

\begin{proposition}
    \label{prop:CompPairsDyckWalk}
    There exists a natural bijection between pairs of compositions of $n$ and Dyck walks of length $2n-2$.
\end{proposition}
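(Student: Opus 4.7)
The proof plan is to compose two well-known encodings. The total count matches on both sides: compositions of $n$ are enumerated by $2^{n-1}$ and unrestricted Dyck walks of length $2n-2$ by $2^{2n-2}=4^{n-1}$, so a bijection must exist; the task is to exhibit a natural one.

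First, I would recall the classical ``bars and stars'' bijection between compositions of $n$ and binary words of length $n-1$. Given $(a_1,\dots,a_k)$ with $a_1+\cdots+a_k=n$, define $w\in\{0,1\}^{n-1}$ by placing a $1$ at the positions $a_1,\, a_1+a_2,\, \ldots,\, a_1+\cdots+a_{k-1}$ (the ``cut points'') and $0$ elsewhere. The inverse reads off the lengths of the maximal $0$-runs (augmented by $1$ for the cut). Thus pairs of compositions of $n$ are in bijection with pairs $(w,w')\in\{0,1\}^{n-1}\times\{0,1\}^{n-1}$.

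Second, I would interleave such a pair into a single binary word of length $2n-2$, for instance
\[
(w,w')\ \longmapsto\ w_1\,w'_1\,w_2\,w'_2\,\cdots\,w_{n-1}\,w'_{n-1},
\]
which is plainly a bijection $\{0,1\}^{n-1}\times\{0,1\}^{n-1}\to\{0,1\}^{2n-2}$. Finally, interpret the resulting binary word as a Dyck walk by the rule $0\mapsto\uu$, $1\mapsto\dd$; this is a bijection to Dyck walks of length $2n-2$ by the very definition (a Dyck walk is an unconstrained $\{\uu,\dd\}$-sequence). Composing the three bijections yields the desired natural bijection.

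There is no real obstacle here; the only ``choice'' is the interleaving convention (one could equally well concatenate the two words, or alternate $\uu/\dd$ assignments), and all yield equivalent bijections. I would note the convention above because it makes joint statistics transparent: the $i$-th pair of steps of the walk records simultaneously whether a cut occurs after position $i$ in the first and second composition, which is useful for transferring statistics between the two sides in the later sections of the paper.
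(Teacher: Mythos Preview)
Your proof is correct and follows essentially the same approach as the paper: encode each composition of $n$ as a binary word of length $n-1$ via the bars-and-stars map, combine the two words into one of length $2n-2$, and read the result as a $\{\uu,\dd\}$-word. The only difference is the combining step---the paper concatenates the two words (so the inverse simply cuts the walk in the middle), whereas you interleave them; you yourself note concatenation as an equivalent choice.
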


\begin{proof}
Let a pair $(A,B)$ of two compositions of $n$ be given.
First, we convert each composition to a binary sequence: 
For each element $k$ in a composition, append $k-1$ zeros followed by a~$1$.  
By construction, both of these sequences have to end in $1$. 
So we remove these ones and then concatenate the binary sequences, with $A$'s sequence coming first. 
Finally, after replacing each $0$ by an up step $\uu$ and each $1$ by a down step $\dd$ the claim follows. 
For the reverse direction cut the walk in the middle into two parts, and re-add the ones.
\end{proof}

Let us show how the above construction works on a concrete example. 

\begin{example}
Let the pair $(A,B)$ of compositions $A = (2, 1, 3)$ and $B = (3, 2, 1)$ of $6$ be given.
In the first step we get the two binary sequences
$011001$ and $001011$.
Then, after erasing the ones at the end we concatenate them and get $0110000101$.
\end{example}

Next, we show that pairs of compositions are in bijection with a special case of Andrews' $k$-compositions~\cite{andrews2007theory}, namely for $k=3$.

\begin{proposition}
    \label{prop:threecompbijection}
    There exists a natural bijection between $3$-compositions of $n$ and pairs of compositions of $n$.     
\end{proposition}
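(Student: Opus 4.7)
The plan is to encode both $3$-compositions and pairs of compositions of $n$ as length-$(n-1)$ words over a common four-letter alphabet, and then match the alphabets.

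First, viewing a composition of $n$ as the set of cut positions it induces among the $n-1$ gaps between consecutive units, a pair of compositions of $n$ is precisely a word of length $n-1$ over $\{0,1\}^2$: at each slot $i$ we record whether $A$ has a cut there and whether $B$ does. This is the same information carried by the binary encoding used in the proof of Proposition~\ref{prop:CompPairsDyckWalk}, read one slot at a time.

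Second, for a $3$-composition of $n$ with parts $(s_1,\dots,s_m)$ and colors $(c_1,\dots,c_m)$ satisfying $c_m=1$, I would attach the color $c_i\in\{1,2,3\}$ to the cut between the $i$-th and $(i+1)$-th part (for $i<m$) and label every intra-part gap by $0$. This produces a word of length $n-1$ over $\{0,1,2,3\}$. The map is bijective: reading any such word from left to right, every maximal block of the form $0^{s-1}c$ with $c\in\{1,2,3\}$ encodes a part of size $s$ and color $c$, and the trailing zeros form the final part, whose color is recovered as $1$ by the last-part convention. This is essentially the only step requiring any bookkeeping.

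Finally, I would complete the bijection by identifying the alphabets via $0\leftrightarrow(0,0)$, $1\leftrightarrow(1,1)$, $2\leftrightarrow(1,0)$, $3\leftrightarrow(0,1)$, so that the color on a cut records whether the slot carries a cut in both $A$ and $B$, only in $A$, or only in $B$, while a non-cut slot remains non-cut in both. No serious obstacle is anticipated; the whole construction amounts to a careful renaming of alphabets. I would include a small worked example (such as $n=2$, where the four $3$-compositions with parts and colors $2_1$, $1_1 1_1$, $1_2 1_1$, $1_3 1_1$ correspond respectively to the pairs $((2),(2))$, $((1,1),(1,1))$, $((1,1),(2))$, $((2),(1,1))$) to make the correspondence tangible.
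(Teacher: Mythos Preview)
Your argument is correct, and in fact it produces the same bijection as the paper's proof, up to the inessential choice of which of the colors $2,3$ corresponds to ``cut only in $A$'' versus ``cut only in $B$''.  Concretely, the paper labels color~$2$ by $L$ and color~$3$ by $R$, then forms $A$ by merging each $L$-part into the next part and $B$ by merging each $R$-part into the next part; in your language this is exactly the rule $1\leftrightarrow(1,1)$, $2\leftrightarrow(0,1)$, $3\leftrightarrow(1,0)$, so your pairing $2\leftrightarrow(1,0)$, $3\leftrightarrow(0,1)$ just swaps the roles of $A$ and $B$.

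The presentations differ, however.  The paper describes the forward map (``merge the colored part into its successor'') and then establishes bijectivity via a separate recursive inverse based on the maximal run of identical initial parts in $(A,B)$.  Your encoding of both objects as length-$(n-1)$ words over a common four-letter alphabet makes both directions transparent in one stroke: the inverse is just reading the word, so no recursion is needed.  This is a genuinely more economical route to the same bijection; the paper's recursive inverse, on the other hand, makes the structural meaning of the colors (``which side first refines the other'') more visible.
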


\begin{proof}
Anticipating the result, we use the following convention for the three colors $1,2,3$, anticipating a notion of left and right:
Remove the labels of color~$1$, 
use label $L$ for color~$2$,
and label $R$ for color~$3$.

Now we describe a map from $3$-compositions of $n$ to pairs of compositions of $n$.
First, we create two identical copies. 
In the first copy, we remove the labels $R$ and add the parts labeled by $L$ to the next part.
If the next part has also a label $L$, then the addition continues to the next part, etc. 
This gives a composition $A$ without any labels.
Similarly, in the second copy, we remove the labels $L$ and add the parts labeled by $R$ to the next part.
Again, if the next number has also a label $R$, then the addition continues, 
and we get a composition $B$ without any labels. 
Observe that the size of both compositions has not changed.
Therefore, $(A, B)$ is a pair of compositions of $n$. 

To prove that this map is in fact a  bijection, let us consider an arbitrary pair $(A,B)$ of compositions of $n$. 
The key statistic to consider is the \emph{run of identical parts}:
Let $A=(a_1,a_2,\dots,a_{\ell_A})$ and $B=(b_1,b_2,\dots,b_{\ell_B})$.
A run is sequence of maximal length such that $a_1=b_1$, $a_2=b_2$, and so on. 
If $a_1 \neq b_2$ we say the run has length $0$. 

For the inverse map, we will describe a recursive algorithm which reduces the sizes of $(A,B)$ step-by-step and builds a $3$-composition by distinguishing two cases.
First, assume $A=B$. 
Then we map the pair to $A$ with all parts having label $1$.     
Note, that in this case the run is trivial as it consists of the full composition.

Second, assume $A \neq B$.
Let the run have length $k$, i.e., $a_{k+1} \neq b_{k+1}$.
Now we keep the first $k$ identical parts and attach a label $1$.
If $a_{k+1}<b_{k+1}$ we keep $a_{k+1}$ and attach a label $3$;
if $a_{k+1}>b_{k+1}$ we keep $b_{k+1}$ and attach a label $2$.
Finally, we subtract the extracted sequence from the first parts of $A$ and $B$, and remove afterwards initial zeros.

We repeat this process with the new parts. 
As the sizes decrease in each step by at least one, this process terminates. 
Moreover, note that in each step both parts decrease by the same size. 
Hence, in the last step the process ends with case one where both parts are equal, and therefore the final part gets label $1$ as required in $3$-compositions.
\end{proof}

\medskip

\noindent\begin{minipage}{0.66\textwidth}
\begin{example}
Consider the $3$-composition $6_1+1_2+4_3+2_1$ of $n=13$.
First, we remove the labels of color~$1$, 
use label $L$ for color~$2$,
and label $R$ for color~$3$.
Second, we create two identical copies. 
In the first copy, we remove the labels $R$ and add the parts labeled by $L$ to the next part.
If the next part has also a label $L$, then the addition continues to the next part, etc. 
Similarly, in the second copy, we remove the labels $L$ and add the parts labeled by $R$ to the next part.
This gives a pair of compositions of $n$ without any labels, and we have shown in Proposition~\ref{prop:threecompbijection} that this is in fact a bijection. 
\end{example}
\end{minipage}
 \hfill
\begin{minipage}{0.3\textwidth}
\vspace{-0.2\baselineskip}
\begin{center}
\begin{tikzpicture}[node distance=0.75cm, auto, scale=0.5, every node/.style={scale=0.7}]
\node[punkt] (first) at (0,0.1) {\(6_1+1_2+4_3+2_1\)};
\node[punkt] (second) [below=of first] {\((6+1_L+4_R+2)\), \((6+1_L+4_R+2)\)};
\node[punkt] (third) [below=of second] {\((6+1_L+4+2)\), \((6+1+4_R+2)\)};
\node[punkt] (fourth) [below=of third] {\((6,1_L+4,2)\), \((6,1,4_R+2)\)};
\node[punkt] (fifth) [below=of fourth] {\((6,5,2)\), \((6,1,6)\)};

\path[thick,->] 
(first) edge (second)
(second) edge (third)
(third) edge (fourth)
(fourth) edge (fifth);
\end{tikzpicture}
\end{center}
\end{minipage}

\medskip

We end this discussion on compositions, by mentioning another class of compositions introduced by Ouvry and Polychronakos~\cite{OuvryPolychronakos2019Exclusion} while working on closed lattice random walks which confine a given area. 
A \emph{$g$-composition} is a composition that allows both positive integers and zeros, such that at most $g-2$ consecutive zeros may appear between positive parts. 
Hopkins and Ouvry show that $g$-compositions are in bijection with Andrews' $k$-compositions.
We adapt their proof slightly, by considering the zeros after a positive part.

\begin{proposition}[{\!\cite{HopkinsOuvry2021Multicompositions}}]
    \label{prop:KandG}
    There is a natural bijection between $k$-compositions and $g$-compositions with $g=k+1$.  
\end{proposition}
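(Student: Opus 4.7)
The plan is to exhibit an explicit bijection in which each colored part of a $k$-composition is replaced by a positive part trailed by a prescribed number of zeros, the color encoding how many zeros to append.

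First, I would define the forward map. Given a $k$-composition $(a_1^{c_1}, a_2^{c_2}, \ldots, a_m^{c_m})$ of $n$, where $c_i \in \{1, 2, \ldots, k\}$ denotes the color and $c_m = 1$ by the definition of a $k$-composition, I would replace each colored part $a_i^{c_i}$ by the block $(a_i, \underbrace{0, \ldots, 0}_{c_i - 1})$ of length $c_i$, then concatenate the blocks. The total sum is preserved since the zeros contribute nothing. Between any two consecutive positive entries one finds exactly $c_i - 1 \leq k - 1 = g - 2$ zeros, so the $g$-composition constraint is satisfied. Because $c_m = 1$, the final block contributes no trailing zeros, hence the resulting composition terminates in a positive entry.

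For the inverse, I would parse a given $g$-composition uniquely from left to right into maximal blocks of the form $(a, 0^j)$ with $a$ a positive entry and $j \in \{0, 1, \ldots, g-2\}$ counting the zeros following $a$ up to the next positive entry (or the end). Each block would then be encoded as a part $a$ with color $j+1 \in \{1, \ldots, k\}$; since the last block has $j = 0$, its color is $1$, matching the $k$-composition requirement. Both procedures are visibly inverse to each other and preserve the underlying integer $n$, so this yields the claimed bijection.

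The only subtlety I foresee is a boundary check, namely verifying that the "last part has color~$1$" constraint of $k$-compositions corresponds precisely to the $g$-composition ending in a positive entry rather than a run of zeros; this is exactly what the forward map ensures and what the inverse map requires, so the two conditions line up cleanly. Beyond this, the argument is a direct translation, and I would model the exposition on the scheme of Hopkins and Ouvry as the excerpt suggests, emphasizing that zeros are assigned to the positive part immediately \emph{preceding} them.
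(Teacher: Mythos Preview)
Your proposal is correct and follows essentially the same approach as the paper: both assign the zeros trailing a positive part to that part, encoding the color as one plus the number of zeros, and both note that the last-part-color-$1$ condition corresponds exactly to the $g$-composition terminating in a positive entry. Your write-up is simply a more explicit version of the paper's terse argument.
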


\begin{proof}
The number of zeros after a positive part $\lambda$ determine the color of $\lambda$.
If $\lambda$ is followed by $k$ zeros we map it to $\lambda_{k+1}$ (and vice versa). 
The order of appearance of positive parts remains unchanged. 
Since, in $g$-composition there are at most $g-2$ zeros between positive parts, these correspond in $k$-compositions to $k=g-1$ different colors.
\end{proof}

We have already seen how pairs of compositions serve as an important link between the fields of compositions and lattice paths. To reinforce this point we will add some more examples of how they are linked to predominant combinatorial and analytic objects. Generic pairs of compositions can be also be subdivided into two important pairs of subclasses: Concave compositions and convex compositions. These are ubiquitous in contemporary literature and are related to important concepts like mock theta functions and Hardinian arrays. 

\smallskip

{
\small 
\begin{center}
\begin{tikzpicture}[scale=1.85, every node/.style={draw, rectangle, minimum width=3cm, minimum height=0.8cm}]
    \node (pairs) at (0,0) {Pairs of Compositions};
    \node (concave) at (2.5,0.65) {Concave Compositions};
    \node (convex) at (2.5,-0.65) {Convex Compositions};
    \node (mocktheta) at (5.25,0.65) {Mock Theta Functions~\cite{MR3048655}};
    \node (hardinian) at (5.25,-0.65) {Hardinian Arrays~\cite{dougherty2023hardinian}};

    \draw[->, >=latex] (pairs) -- (concave);
    \draw[->, >=latex] (pairs) -- (convex);
    \draw[->, >=latex] (concave) -- (mocktheta);
    \draw[->, >=latex] (convex) -- (hardinian);
\end{tikzpicture}
\end{center}
}

There have been historically two different definitions of concave compositions (which only slightly differ but the basic idea remains the same). 
Simply put, concave compositions are compositions where parts decrease up to the middle after which the parts increase again. Similarly convex compositions are those compositions where the parts increase up to the middle after which they weakly decrease till the end. 
Concave compositions are interesting because their generating function is a mixed mock modular form, which is related to the study of moonshine in Mathieu groups~\cite{zbMATH06254616}.
They are also related to partition functions where the greatest part is odd/even and also partition functions where the smallest non-appearing part is odd/even \cite{zbMATH05903054, MR3048655, MR2864448}. 

We will show now a new link between convex compositions and pairs of compositions to Hardinian arrays, which were introduced by Dougherty-Bliss and Kauers in~\cite{dougherty2023hardinian}.

\begin{definition}[Hardinian Arrays]
\label{def:Hardinian}
For any positive integer \( r \), let \( H_r(n, k) \) be the number of \( n \times k \) arrays which obey the following rules:
\begin{itemize}
    \item The entry in position \( (1, 1) \) is 0, and the entry in position \( (n, k) \) is \( \max(n, k) - r - 1 \).
    \item The entry in position \( (i, j) \) must equal or be one more than each of the entries in positions \( (i - 1, j) \), \( (i, j - 1) \), and \( (i - 1, j - 1) \).
    \item The entry in position \( (i, j) \) must be within \( r \) of \( \max(i, j) - 1 \).
\end{itemize}
\end{definition}

We consider the \emph{hooks} of the Hardinian array, i.e., the numbers appearing above an element of the diagonal and the numbers to the left of the diagonal. (These kind of shapes have been considered in the theory of partitions and compositions so we borrow the definition of a hook from that literature).   
The numbers in a hook in a Hardinian array are arranged such that they are weakly increasing up to the diagonal element whether looked from left to right or top to bottom. 
There are a number of operations that can be performed on a hook in a Young's diagram of a partition. 
The simplest one is just straightening the hook around the diagonal element which is used to give an elementary proof of why the number of partitions into odd, distinct parts is equal to the number of self-conjugate partitions\cite{zbMATH06775939}.
If we straighten the hooks in a Hardinian array we can immediately see that we get for each hook a convex composition; see Figure~\ref{fig:Hardin2ConvexComp}.

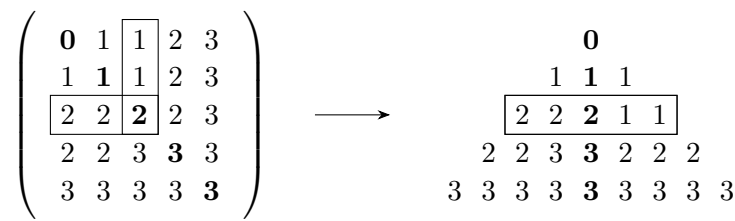
\begin{figure}[ht]
    \begin{center}
    \begin{tikzpicture}
    \matrix (m) [matrix of math nodes, left delimiter=(, right delimiter=)] {
    \boldsymbol{0} & 1 & 1 & 2 & 3\\
    1 & \boldsymbol{1} & 1 & 2 & 3\\
    2 & 2 & \boldsymbol{2} & 2 & 3\\
    2 & 2 & 3 & \boldsymbol{3} & 3\\
    3 & 3 & 3 & 3 & \boldsymbol{3}\\
    };
    \draw (m-3-1.north west) rectangle (m-3-3.south east);
    \draw (m-1-3.north west) rectangle (m-3-3.south east);
    
    \draw[-Stealth] ([xshift=1cm]m.east) -- ++(1,0);
    
    \matrix (mb) [matrix of math nodes] at (6,0) {
      &   &   &   & \boldsymbol{0} &   &   &   &   \\
      &   &   & 1 & \boldsymbol{1} & 1 &   &   &   \\
      &   & 2 & 2 & \boldsymbol{2} & 1 & 1 &   &   \\
      & 2 & 2 & 3 & \boldsymbol{3} & 2 & 2 & 2 &   \\
    3 & 3 & 3 & 3 & \boldsymbol{3} & 3 & 3 & 3 & 3 \\
    };
    \draw (mb-3-3.north west) rectangle (mb-3-7.south east);
    \draw (mb-3-3.north west) rectangle (mb-3-7.south east);
    
    \end{tikzpicture}
    \caption{An equivalent representation of a Hardinian array with $r=1$ as a triangle of convex compositions (obeying rules analogous to Definition~\ref{def:Hardinian}).}
    \label{fig:Hardin2ConvexComp}
\end{center}
\end{figure}

Dougherty-Bliss and Kauers show in~\cite[Theorem~1]{dougherty2023hardinian} that $H_1(n,n)= \frac{1}{3}(4^{n-1}-1)$ for $n\geq 1$, which is immediately connected to the $3$-compositions of Andrews we have just seen. 
If we restrict in any $3$-composition the first part to be the same color as the last part, then we automatically decrease the number of compositions by a factor of~$3$ (except the case where the first part and the last part are equal). 
Therefore the number of $3$-compositions with first and last part of the same color is equal to $\frac{1}{3}(4^{n-1}-1)$ = $H_1(n,n)$. 
Now, Proposition~\ref{prop:threecompbijection} motivates the search for pairs of compositions, or by Proposition~\ref{prop:CompPairsDyckWalk} for Dyck walks. 
Note that by the bijections, if in a $3$-composition the first and last color are the same, then the pairs of compositions have the same first part.

\begin{proposition}
    \label{prop:HardinBijection}
    There is a natural bijection between Hardinian arrays of size $n \times n$ with parameter $r=1$ and pairs of compositions of $n$ with at least two parts and  equal first part. 
\end{proposition}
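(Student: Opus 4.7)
The plan is to construct an explicit bijection via a canonical decomposition of the Hardinian array driven by its diagonal structure. First I would confirm the numerical match $\frac{1}{3}(4^{n-1}-1)$ on both sides: for the Hardinian side this is Theorem~1 of Dougherty-Bliss and Kauers quoted in the text; for the compositions side it follows from Proposition~\ref{prop:threecompbijection} combined with the preceding observation that restricting $3$-compositions of $n$ to have matching first and last colours and at least two parts yields $\frac{1}{3}(4^{n-1}-1)$, the ``$-1$'' accounting for the exclusion of the single-part case $(n_1)$.

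The key structural observation is that the entries of any Hardinian array with $r=1$ are confined to $\{\max(i,j)-2,\max(i,j)-1\}$, and the diagonal must ascend from $(1,1)=0$ to $(n,n)=n-2$ in steps of $0$ or $1$. Hence it has a unique \emph{stall} position $s\in\{1,\ldots,n-1\}$ with $(s+1,s+1)=(s,s)=s-1$. A hook-by-hook analysis (where the $k$-th hook is the set of entries with $\max(i,j)=k$) then shows that each hook is a convex binary composition in $\{k-2,k-1\}$; all hooks with $k\geq s+1$ are forced to be constant equal to $k-2$, while each variable hook $k\in\{2,\ldots,s\}$ has middle value $k-1$ and is characterised by the lengths $\ell_k,r_k\in[0,k-1]$ of its left and right stretches of $(k-2)$'s. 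The diagonal-adjacency constraint $(i+1,j+1)-(i,j)\in\{0,1\}$ then forces: if $\ell_k\geq 1$ then $\ell_{k+1}\geq \ell_k+1$, and analogously for $r_k$. A short induction on $s$ gives exactly $2^{s-1}$ admissible $\ell$-sequences (and the same for $r$-sequences), summing to $\sum_{s=1}^{n-1}4^{s-1}=\frac{1}{3}(4^{n-1}-1)$ as required.

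The bijection is then as follows: the stall position $s$ encodes the common first part $a_1=b_1=n-s$; the admissible $\ell$-sequence $(\ell_2,\ldots,\ell_s)$ encodes the composition $A'$ of $s$ whose first part is $s-\ell_s$ followed by the positive jumps of $\ell$, that is, $\ell_t,\ \ell_{t+1}-\ell_t,\ldots,\ell_s-\ell_{s-1}$, where $t$ is the smallest index with $\ell_t\geq 1$ (and $A'=(s)$ when $\ell\equiv 0$); the $r$-sequence yields a composition $B'$ of $s$ in the same way. The resulting pair $(A,B)=((n-s,A'),(n-s,B'))$ is a pair of compositions of $n$ with equal first parts and at least two parts. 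The main obstacle, to my mind, is deriving the $\ell_{k+1}\geq \ell_k+1$ constraint rigorously from the diagonal adjacency and verifying that the resulting map commutes via Proposition~\ref{prop:threecompbijection} with the natural $3$-composition decomposition $(n-s,\,[\,3\text{-composition of }s\,])$ on the other side; invertibility of each step is then immediate by reversing the construction.
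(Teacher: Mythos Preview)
Your approach is correct and follows the same overall strategy as the paper: identify the unique stall on the diagonal, observe that all later hooks are forced constant while each earlier hook is characterised by the position of the low/high boundary on each of its two arms, note that the two arms are independent of one another, and map each side to a composition of the appropriate size with the common first part recording the stall position. The paper phrases the extraction of the composition via a \emph{walk} on the triangle (moving outward on high entries, upward on low ones, and recording the number of cells visited per row), whereas you record the boundary positions $\ell_k$ directly and read off the composition from their successive differences. These two encodings actually produce \emph{different} compositions for the same array --- on the paper's running example your right-side composition is $(1,2,1)$ while the paper's walk gives $(1,1,2)$ --- but both are valid bijections, and the underlying structural analysis is identical.

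Two minor remarks. First, your flagged ``main obstacle'' --- deriving $\ell_{k+1}\ge\ell_k+1$ from $\ell_k\ge 1$ --- is a one-line case check: if $(k,j)=k-2$ then both the vertical and the diagonal adjacency constraints force $(k+1,j)=(k+1,j+1)=k-1$, so the block of low entries grows by at least one; just carry this out rather than leaving it as an obstacle (and do note briefly why the value $\max(i,j)$ itself is excluded, via $(i,j)\le(i,i)\le i-1$). Second, there is no need to verify compatibility with Proposition~\ref{prop:threecompbijection}: the bijection you describe is already direct, and the link to $3$-compositions is motivation rather than part of the proof.
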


\begin{proof}
    We are going to work in the equivalent number triangle introduced above.
    Then, the center column starts with a $0$, ends with a $n-2$, and increases weakly from top to bottom.
    Hence, one number is used twice and all the others only once. 
    Let $s\geq 2$ be the row of the second appearance. 
    By the rules of a Hardinian array in Definition~\ref{def:Hardinian}, the $i$th row contains numbers from $\{i-1,i\}$.
    Therefore, since each row is a convex composition, row $s$ and all rows below rows are deterministically filled with the minimum entry.

    It remains to consider the rows $2,3,\dots,s-1$, whose center column is $1,2,\dots,s-2$.
    Observe that the center column separates the triangle into a left and right one that are independent of each other.
    We will now map each side to a composition of $s-1$ as illustrated in Figure~\ref{fig:Hardin2CompPair}.
    For this purpose we define the following walk on the elements of the triangles.
    Start at the entry next to row $s-1$. 
    Recursively, for row $i$, if the current entry is $i$ go the the left/right, if it is $i-1$ go up.\footnote{Note that equivalent paths already appear in the original proof of \cite[Theorem~1]{dougherty2023hardinian}.}
    Count in each row the number of visited entries (including a final entry when leaving the triangle). 
    Read this sequence from bottom to top.
    As these walks have $s-1$ steps, this gives a composition of $s-1$, and each such composition corresponds to a unique walk. 
    This gives a composition $C_L$ for the left triangle, and $C_R$ for the right triangle. 
    Finally, attach the value $n-s+1$ as first part, and the claim follows. 

    For the reverse bijection directly follows from reversing these steps:
    The first part determines the center column, and the remaining parts the left and right triangle.
\end{proof}

\begin{figure}[ht]
\begin{center}
    \begin{tikzpicture}
    \matrix (mb) [matrix of math nodes] {
    \text{Row} & \text{Range} & \hspace{6mm} & \text{Comp.\ $C_L$} & \hspace{2mm} &    &   &   &   &   &   &   &   & &  \hspace{2mm} & \text{Comp.\ $C_R$}  \\
    1 & \{0\}    &&   && &&   &   & \boldsymbol{0} &   &   &   &  && \\
    2 & \{0,1\}  &&   && &&  & 1 & \boldsymbol{1} & 1 & \phantom{1}  &   &  && 2 \\
    3 & \{1,2\}  && 2 &  && \phantom{1} & 2 & 2 & \boldsymbol{2} & 1 & 1 &   &   && 1 \\
    4 & \{2,3\}  && 2 & && 2 & 2 & 3 & \boldsymbol{3} & 2 & 2 & 2 &   && 1 \\
    5 & \{3,4\}  && && 3 & 3 & 3 & 3 & \boldsymbol{3} & 3 & 3 & 3 & 3 && \\
    };
    \draw (mb-1-1.south west) -- (mb-1-16.south east);
    \draw (mb-6-6.north west) -- (mb-6-14.north east);
    
    \draw (mb-5-9) circle (5pt);
    \draw (mb-5-8) circle (5pt);
    \draw (mb-4-8) circle (5pt);
    \draw (mb-4-7) circle (5pt);
    
    \draw (mb-5-11) circle (5pt);
    \draw (mb-4-11) circle (5pt);
    \draw (mb-3-11) circle (5pt);
    \draw (mb-3-12) circle (5pt);
    \end{tikzpicture}
    \caption{The Hardinian array from Figure~\ref{fig:Hardin2ConvexComp} is in bijection with the pair of compositions $((1,2,2),(1,1,1,2))$ with equal first part $1$; see Proposition~\ref{prop:HardinBijection}.}
    \label{fig:Hardin2CompPair}
\end{center}
\end{figure}

\begin{remark}
Let $M(n)$ denote integer compositions with equal first and last part and at least two parts. Then we also find that $M(2n-1) = H_1(n,n)$. In general, $M(n)$ is equal to the number of tilings of a $2\times n$ rectangle with $1\times 2$ and $2 \times 1$ dominoes and $2\times 2$ squares, which gives the Jacobsthal numbers \oeis{A001045} and interestingly is also equal to Hopkins and Ouvry's $g$-compositions ($g=2$) of $n+1$ without any 1s.  
\end{remark}

Dougherty-Bliss and Kauers~\cite{dougherty2023hardinian} also prove that $H_r(n,n)$ satisfies linear recurrences with polynomial coefficients for $r \geq 2$, which, however, do not admit simple closed forms. 
But they show that for $r=1$ the rectangular case has the solution $H_1(n, k) = 4^{k-1} (n - k) + H_1(k,k)$ for $n \geq k \geq 1$, which we also suspect to have an interpretation as a family of compositions.

\smallskip

In the next section we will extend the connection from Proposition~\ref{prop:CompPairsDyckWalk} between pairs of compositions and Dyck paths by proving further bijections between different classes of lattice paths, among which some are enumerated by $4^{n-1}$.
In these classes, the distribution of peaks will play a key role.

\subsection{Bijections involving peaks and crossings in paths}
\label{sec:PeaksAndPaths}

Let us first consider the number of peaks $\uu\dd$ in all Dyck paths. 
A peak has \emph{height $h$} if its $\uu$ step ends on height $h$.
A \emph{Dyck path with marked peak} is a Dyck path in which one peak gets a special marker. We consider such paths to be different, if they are different paths, or, if they consist of the same path but have different marked peaks.

Moreover, we will need the concept of \emph{negative Dyck paths}: these consist of steps $\uu$ and $\dd$, start at the origin, end on the $x$-axis, and always stay weakly below the $x$-axis. 
Note that there are many known bijections between Dyck paths and negative Dyck paths, like, e.g., 
flipping the path by swapping each $\dd$ by $\uu$ and vice versa; 
or, alternatively, rotating the full path by $180^{\circ}$.

In our first result we connect Dyck paths with marked peaks and Dyck bridges (i.e., paths ending at $0$ that may traverse below the $x$-axis).

\begin{theorem}
    \label{theo:Dyckpeak}
    There is an explicit bijection between Dyck paths with marked peak of height~$h$ and Dyck bridges starting with a $\dd$ step and $h-1$ crossings of the $x$-axis preserving the length.
    Therefore, the number of peaks in all Dyck paths of length $2n$ is equal to $\binom{2n-1}{n}$.
\end{theorem}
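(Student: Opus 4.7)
The plan is to give an explicit bijection by decomposing both sides along their natural hierarchical structure: the ancestor chain of the marked peak on the Dyck path side, and the crossings of the $x$-axis on the bridge side.

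First I would decompose $P$ around the marked peak $U_p D_p$. Just before $U_p$ the path sits at height $h-1$, so in the standard parenthesization of $P$ there are exactly $h-1$ still-open up-steps; together with $U_p$ these give $h$ ancestral up-steps $U_1, U_2, \ldots, U_h = U_p$, where $U_i$ takes the path from height $i-1$ to $i$. Let $D_1, \ldots, D_h = D_p$ be their matching down-steps. This yields the unique factorization
\[
P = A_0\, U_1 A_1 U_2 A_2 \cdots A_{h-1} U_h D_h B_{h-1} D_{h-1} \cdots B_1 D_1 B_0,
\]
where $A_i$ (resp.\ $B_i$) is the sub-word strictly between $U_i$ and $U_{i+1}$ (resp.\ between $D_{i+1}$ and $D_i$), and $A_0$, $B_0$ are the prefix before $U_1$ and suffix after $D_1$. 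Since $D_1, \ldots, D_i$ all lie after $U_{i+1}$, the stack of currently open up-steps throughout $A_i$ always contains $U_1, \ldots, U_i$, so $A_i$ stays weakly above its base height $i$ and, shifted to the axis, is an ordinary Dyck path. The same argument applies to $B_i$.

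Next I would assemble the bridge. For $k = 1, \ldots, h$ put $E_k = \uu A_{k-1} \dd B_{k-1}$, and flip every step (exchanging $\uu$ and $\dd$) if $k$ is odd. Then each $E_k$ is a nonempty positive (for $k$ even) or negative (for $k$ odd) Dyck excursion of length $|A_{k-1}| + |B_{k-1}| + 2$, and $\tilde P = E_1 E_2 \cdots E_h$ is a length-$2n$ walk from $0$ to $0$ starting with $\dd$. Inside each arch the path stays on one side of the axis, so any internal return to $0$ has both neighbours at height $\pm 1$ of the same sign and is not a crossing; at every junction between consecutive arches the sign flips, producing a genuine crossing. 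Hence $\tilde P$ has exactly $h-1$ crossings. The inverse map is then forced: split the bridge at its $h-1$ crossings into $h$ arches of alternating sign (the first one negative), flip the odd-indexed arches to make them positive Dyck excursions, apply the classical first-return decomposition $E_k = \uu A_{k-1} \dd B_{k-1}$ to recover each pair $(A_{k-1}, B_{k-1})$ uniquely, and reassemble via the displayed formula.

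The enumerative statement follows at once by summing over $h$: the total number of (Dyck path, marked peak) pairs equals the number of Dyck bridges of length $2n$ starting with $\dd$, and such a bridge is determined by placing the remaining $n$ up-steps among the $2n-1$ non-initial positions, giving $\binom{2n-1}{n}$. The most delicate point of the argument is the first step --- showing that the ancestor-based decomposition yields genuine Dyck paths $A_i, B_i$ --- together with the verification that crossings of $\tilde P$ occur exactly at the arch junctions; once these are in place, both directions of the bijection and the enumeration follow immediately.
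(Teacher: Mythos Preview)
Your proof is correct and follows essentially the same construction as the paper. What you call the ``ancestor chain'' decomposition is exactly the paper's last-passage / first-passage decomposition: your $A_{k-1}$ and $B_{k-1}$ coincide with the paper's $L_k$ and $R_k$, your $E_k = \uu A_{k-1}\dd B_{k-1}$ is the paper's $D_k = \uu L_k \dd R_k$, and your flip on odd-indexed arches is the specific instance of the paper's map $\varphi$ obtained by swapping $\uu$ and $\dd$. The only cosmetic difference is that the paper leaves $\varphi$ as an arbitrary bijection between positive and negative Dyck excursions, and derives $\binom{2n-1}{n}$ as $\tfrac12\binom{2n}{n}$ rather than by your direct count of the non-initial up-step positions.
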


    \newcommand{\DD}{D}
    \newcommand{\LL}{L}
    \newcommand{\RR}{R}
\begin{proof}
    First, let a Dyck path $\DD$ with marked peak at height $h$ be given. 
    Using this peak, we decompose the path $\DD$ into a left part $\LL$ from the origin to this peak and a right part $R$ from this peak to the end: $\DD = \LL \RR$ such that $\LL$ ends with $\uu$ and $\RR$ starts with $\dd$.
    In $\LL$ we perform a last-passage decomposition, cutting at the $\uu$ leaving a certain altitude for the last time;
    while in $\RR$ we perform a first-passage decomposition, cutting at the $\dd$ bringing us down to a new altitude the first time; see Figure~\ref{fig:firstlastpassageLR}.
    More formally, we have
    \begin{align}
        \label{eq:DyckPeakDecompLR}
        \begin{aligned}
        \LL &= \LL_1 \uu \LL_2 \uu \dots \uu \LL_{h} \uu,\\
        \RR &= \dd \RR_{h} \dd \RR_{h-1} \dd \dots \dd \RR_{1},
        \end{aligned}
    \end{align}
    where $\LL_i$ and $\RR_i$ for $j=1,\dots,h$ are Dyck paths.
    Now, we pair the paths $\LL_i\uu$ and $\dd \RR_i$ with the same index and map them to non-empty Dyck paths 
    \begin{align*}
        \DD_i = \uu \LL_i \dd \RR_i.
    \end{align*}    
    Then we concatenate these parts, after mapping each other part to its negative pair using any fixed bijection $\varphi$ between Dyck paths and negative Dyck paths. 
    This gives the Dyck bridge
    \begin{align}
        \label{eq:DyckPeakFinalDecomp}
        \varphi(\DD_1) \DD_2 \varphi(\DD_3) \DD_4 \dots \varphi(\DD_{h-1}) \DD_{h}
    \end{align}
    when $h$ is even.
    For odd $h$ it ends with $\varphi(\DD_{h})$.
    This bridge starts with a down  step $\dd$  and crosses the $x$-axis $h-1$ times, as claimed.

   \begin{figure}[t]
    \begin{tikzpicture}[scale=0.25]
        
        \newcommand{\pathLength}{49}
        \draw[step=1cm,lightgray,very thin] (0,0) grid (\pathLength,8);
        \draw[thick,->] (0,0) -- (\pathLength+0.5,0) node[anchor=north west] {x};
        \draw[thick,->] (0,0) -- (0,8+0.5) node[anchor=south east] {y};

        \draw[line width=2pt,blue] (0,0) -- ++(1,1); 
        \draw[line width=2pt,blue] (1,1) -- ++(1,1) -- ++(1,-1) -- ++(1,1); 
        \draw[line width=2pt,blue] (4,2) -- ++(1,1) -- ++(1,-1); 
        \draw[line width=2pt,blue] (6,2) -- ++(1,-1); 
        \draw[line width=2pt,blue] (7,1) -- ++(1,-1) -- ++(1,1); 
        \draw[line width=2pt,black] (8,0) -- ++(1,1) -- ++(1,1); 
        \draw[line width=2pt,blue] (10,2) -- ++(1,1); 
        \draw[line width=2pt,blue] (11,3) -- ++(1,1) -- ++(1,-1) -- ++(1,1); 
        \draw[line width=2pt,blue] (14,4) -- ++(1,1); 
        \draw[line width=2pt,blue] (15,5) -- ++(1,1); 
        \draw[line width=2pt,blue] (16,6) -- ++(1,1); 
        \draw[line width=2pt,blue] (17,7) -- ++(1,1) -- ++(1,-1); 
        \draw[line width=2pt,blue] (19,7) -- ++(1,-1); 
        \draw[line width=2pt,blue] (20,6) -- ++(1,-1); 
        \draw[line width=2pt,blue] (21,5) -- ++(1,-1); 
        \draw[line width=2pt,blue] (22,4) -- ++(1,-1); 
        \draw[line width=2pt,blue] (23,3) -- ++(1,-1) -- ++(1,1);
        \draw[line width=2pt,black] (24,2) -- ++(1,1) -- ++(1,1); 
        \draw[line width=2pt,black] (26,4) -- ++(1,1); 
        \draw[line width=2pt,blue] (27,5) -- ++(1,1) -- ++(1,-1) -- ++(1,1); 
        \draw[line width=2pt,black] (29,5) -- ++(1,1) -- ++(1,-1) -- ++(1,-1); 
        \draw[line width=2pt,blue] (32,4) -- ++(1,-1) -- ++(1,1)-- ++(1,1); 
        \draw[line width=2pt,black] (32,4) -- ++(1,-1); 
        \draw[line width=2pt,blue] (35,5) -- ++(1,1); 
        \draw[line width=2pt,blue] (36,6) -- ++(1,1); 
        \draw[line width=2pt,blue] (37,7) -- ++(1,1) -- ++(1,-1); 
        \draw[line width=2pt,blue] (39,7) -- ++(1,-1);
         \draw[line width=2pt,blue] (40,6) -- ++(1,-1); 
        \draw[line width=2pt,blue] (41,5) -- ++(1,-1); 
        \draw[line width=2pt,blue] (42,4) -- ++(1,-1) -- ++(1,-1) -- ++(1,1) -- ++(1,-1); 
        \draw[line width=2pt,black] (43,3) -- ++(1,-1); 
         \draw[line width=2pt,black] (46,2) -- ++(1,-1); 
         \draw[line width=2pt,black] (47,1) -- ++(1,-1); 
        
        \filldraw[red] (30,6) circle (8pt);
        \draw[red, dashed] (30,0) -- (30,6);

        \node at (8,6) {L};
        \node at (42,6) {R};
    \end{tikzpicture}

      \begin{tikzpicture}[scale=0.25]
    \newcommand{\pathLength}{49}
        \draw[step=1cm,lightgray,very thin] (0,-7) grid (\pathLength,5);
        \draw[thick,->] (0,0) -- (\pathLength+0.5,0) node[anchor=north west] {x};
        \draw[thick,->] (0,-7) -- (0,5+0.5) node[anchor=south east] {y};
    
    \draw[line width=2pt,blue] (0,0)
    -- ++(1,-1)
    -- ++(1,-1)
    -- ++(1,-1)
    -- ++(1,1)
    -- ++(1,-1)
    -- ++(1,-1)
    -- ++(1,1)
    -- ++(1,1)
    -- ++(1,1)
    -- ++(1,1)
    -- ++(1,1)
    -- ++(1,-1)
    -- ++(1,-1)
    -- ++(1,-1)
    -- ++(1,-1)
    -- ++(1,1)
    -- ++(1,-1)
    -- ++(1,-1)
    -- ++(1,-1)
    -- ++(1,-1)
    -- ++(1,-1)
    -- ++(1,1)
    -- ++(1,1)
    -- ++(1,1)
    -- ++(1,1)
    -- ++(1,1)
    -- ++(1,1)
    -- ++(1,1)
    -- ++(1,-1)
    -- ++(1,1)
    -- ++(1,1)
    -- ++(1,-1)
    -- ++(1,1)
    -- ++(1,1)
    -- ++(1,1)
    -- ++(1,1)
    -- ++(1,1)
    -- ++(1,-1)
    -- ++(1,-1)
    -- ++(1,-1)
    -- ++(1,-1)
    -- ++(1,-1)
    -- ++(1,-1)
    -- ++(1,1)
    -- ++(1,1)
    -- ++(1,1)
    -- ++(1,-1)
    -- ++(1,-1)
;

    \draw[line width=2pt,black] (0,0) -- (1,-1); 
    \draw[line width=2pt,black] (9,-1) -- ++(1,1) -- ++(1,1) -- ++(1,-1) -- ++(1,-1); 
    \draw[line width=2pt,black] (27,-1) -- ++(1,1); 
    \draw[line width=2pt,black] (30,0) -- ++(1,1) -- ++(1,-1); 
    \draw[line width=2pt,black] (42,0) -- ++(1,-1) -- ++(1,1) -- ++(1,1); 
    \draw[line width=2pt,black] (47,1) -- ++(1,-1);

    \filldraw[red] (10,0) circle (8pt);
    \filldraw[red] (12,0) circle (8pt);
    \filldraw[red] (30,0) circle (8pt);
    \filldraw[red] (42,0) circle (8pt);
    \filldraw[red] (44,0) circle (8pt);
    
\end{tikzpicture}
    \caption{Dyck path with a marked peak (red dot) at height 6 and image under bijection from Theorem~\ref{theo:Dyckpeak} given by a Dyck bridge starting with a $\dd$~step and 5=6-1 crossings (red dots). The black steps are used in the last-passage (resp., first-passage) decomposition in the proof. } 
    \label{fig:firstlastpassageLR}
\end{figure}
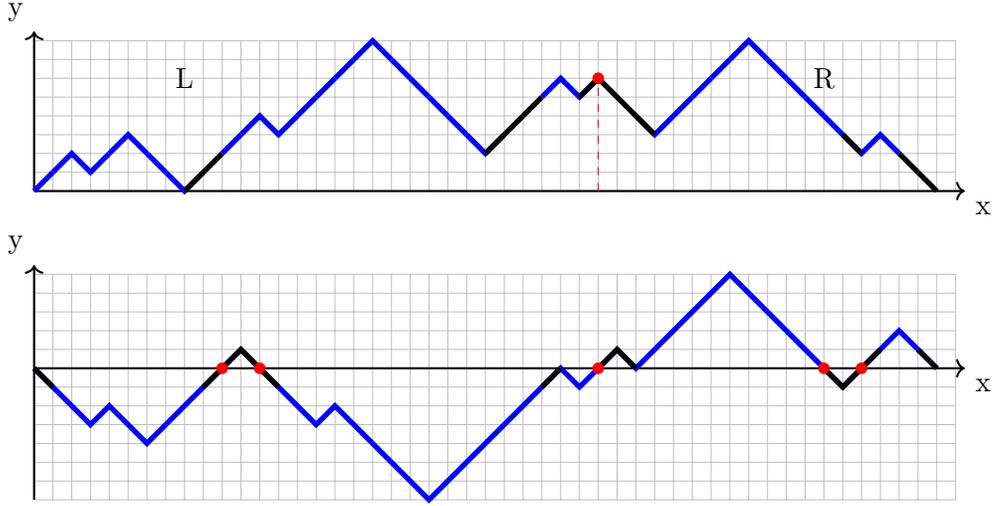

    Second, let a Dyck bridge starting with a $\dd$ step be given. 
    We cut at each crossing of the $x$-axis and recover the components $\DD_i$ and $\varphi(\DD_i)$. 
    Hence, it is straightforward to recover the components $\LL_i$ and $\RR_i$ and to rebuild the Dyck path $\DD$ with marked peak. 

    Finally, bridges of length $2n$ are counted by $\binom{2n}{n}$, as there is an equal number of up and down steps. As half of them start with a down step, we get $\frac{1}{2}\binom{2n}{n} = \binom{2n-1}{n}$.
\end{proof}

\begin{remark}
    The choice of $\varphi$ and the mapped pairs $L_i$ and $R_i$ that form $D_i$ is arbitrary and many other choices are possible, which would lead to different bijections between these two classes.
    For the map $\varphi$ any bijection between Dyck paths and negative Dyck paths is usable.
    Moreover, the above proof immediately also gives a bijection to Dyck bridges starting with an $\uu$ step, by applying $\varphi$ in \eqref{eq:DyckPeakFinalDecomp} only to components with even index. 
\end{remark}

\begin{remark}
    The counting sequence for the total number of peaks in Dyck paths is \oeis{A001700} in the OEIS\footnote{The On-Line Encyclopedia of Integer Sequences: \url{http://oeis.org/}}, and counts many other combinatorial objects.
    One of those is a special class of pairs of compositions, namely
    pairs $(A,B)$ of compositions of $n$ such that $A$ and $B$ have equal number of parts or $A$ has one more part than $B$.
    It is easy to see that these pairs are in bijection with compositions of $2n$ such that the sum of the elements at odd positions is equal to the sum of the elements at even positions, mentioned in~\oeis{A001700}.
\end{remark}

Next we consider the sum of all heights in all Dyck paths of length $2n$.
Our chain of bijections will explain its remarkably simple formula $4^{n-1}$.
For this purpose we generalize the class of Dyck paths with marked peak. 
A \emph{Dyck path with height-labeled peak}, is a Dyck path in which one peak gets a label from $\{1,2,\dots,h\}$ where $h$ is the height of the specific peak.

\begin{proposition}
\label{prop:LabeltoMaxima}
    There is an explicit bijection between Dyck paths with height-labeled peak with label $\mu$ at height $h$ and Dyck bridges of the same length with marked strict left-to-right maximum at height $\mu$ and $h-\mu$ crossings of the $x$-axis after this maximum.
\end{proposition}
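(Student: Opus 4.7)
The plan is to extend the decomposition-based bijection of Theorem~\ref{theo:Dyckpeak} by using the label $\mu$ as a ``cut parameter''. Given a Dyck path $D$ with marked peak at height $h$ and label $\mu \in \{1, \ldots, h\}$, I would first apply the last-passage/first-passage decomposition from Theorem~\ref{theo:Dyckpeak} to obtain the subpaths $L_i, R_i$ and the associated Dyck paths $D_i = \uu L_i \dd R_i$ for $i = 1, \ldots, h$. The label $\mu$ then designates a boundary: the first $\mu$ components are reassembled into a Dyck path $\tilde D$ with peak at height $\mu$ (via the inverse of Theorem~\ref{theo:Dyckpeak} applied with $\mu$ in place of $h$), while the remaining $h - \mu$ components are arranged with alternating signs to produce crossings of the $x$-axis.

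Explicitly, I would set
\[
\tilde D := L_1\, \uu\, L_2\, \uu \cdots L_\mu\, \uu\, \dd\, R_\mu\, \dd\, R_{\mu-1} \cdots \dd\, R_1,
\]
whose central peak at height $\mu$ serves as the marked strict left-to-right maximum of the target bridge. Then form the target bridge
\[
B := \tilde D \cdot \varphi(D_{\mu+1}) \cdot D_{\mu+2} \cdot \varphi(D_{\mu+3}) \cdots
\]
(with the sign of the final component determined by the parity of $h - \mu$), mimicking the alternating-sign construction of Theorem~\ref{theo:Dyckpeak}. By the same counting argument used there, the $h - \mu$ appended components contribute exactly $h - \mu$ crossings of the $x$-axis after the marked peak.

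The inverse map would be constructed by locating the marked strict left-to-right maximum of $B$ at height $\mu$, isolating the prefix $\tilde D$ by cutting at the first return to $0$ after this maximum at which $B$ next moves to the negative side, decomposing the resulting alternating-sign tail into the Dyck paths $D_{\mu+1}, \ldots, D_h$, applying Theorem~\ref{theo:Dyckpeak} to $\tilde D$ with its central peak at height $\mu$ marked to recover $D_1, \ldots, D_\mu$, and finally reassembling $D$ from all $h$ components.

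The main technical obstacle is to show that the central peak of $\tilde D$ at height $\mu$ is genuinely a \emph{strict} left-to-right maximum of the bridge $B$, i.e., that all preceding peaks in $\tilde D$ are strictly lower than $\mu$. This requires a careful analysis of the heights reached by the subpaths $L_i, R_i$ within the reassembled $\tilde D$, showing that the last-passage/first-passage structure inherited from $D$ forces the intermediate pieces to stay below the central peak's height. This is the analog of the claim in Theorem~\ref{theo:Dyckpeak} that the alternating-sign construction produces exactly the right number of crossings, and it is where the particular decomposition chosen in the forward map plays its crucial role.
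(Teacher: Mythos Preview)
Your overall architecture is exactly that of the paper: decompose around the marked peak into $L_1,\dots,L_h$ and $R_1,\dots,R_h$, use the first $\mu$ pieces to build a prefix carrying a peak at height~$\mu$, and attach the remaining $h-\mu$ pieces as alternating positive/negative arches to create the crossings. The divergence is in how the prefix is built, and this is where your proposal has a genuine gap.

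You set
\[
\tilde D = L_1\,\uu\,L_2\,\uu\cdots L_\mu\,\uu\,\dd\,R_\mu\,\dd\cdots\dd\,R_1,
\]
keeping the $L_i$ as (positive) Dyck paths, and then hope to argue that the central peak at height~$\mu$ is a strict left-to-right maximum. This is not a technical obstacle to be overcome; it is simply false. In the last-passage decomposition the $L_i$ are arbitrary Dyck paths with no bound on their height. Concretely, take $D=\uu\uu\dd\dd\,\uu\uu\dd\dd$ with the second peak (height $h=2$) marked and label $\mu=1$. Then $L_1=\uu\uu\dd\dd$, $L_2=R_1=R_2=\varepsilon$, and your $\tilde D=L_1\,\uu\,\dd\,R_1=\uu\uu\dd\dd\,\uu\dd$: the marked peak sits at height~$1$ but is preceded by a peak at height~$2$, so it is not a left-to-right maximum at all. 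The map is therefore not into the claimed target set, and the ``careful analysis'' you propose cannot succeed.

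The paper's fix is small but essential: it applies $\varphi$ to each $L_i$ in the prefix, producing
\[
\varphi(L_1)\,\uu\,\varphi(L_2)\,\uu\cdots\uu\,\varphi(L_\mu)\,\uu\,\dd\,R_\mu\,\dd\cdots\dd\,R_1.
\]
Since each $\varphi(L_i)$ is a negative excursion at level $i-1$, the path stays weakly below level $i-1$ until the next $\uu$, so height~$\mu$ is reached for the first time at the marked $\uu\dd$. This forces the marked peak to be a strict left-to-right maximum by construction, with no further argument needed. Once you make this one change, the rest of your outline (appending $\varphi(D_{\mu+1})\,D_{\mu+2}\cdots$ and inverting by cutting at the crossings) coincides with the paper's proof.
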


\begin{proof}
    This bijection follows directly from the one described in the proof of Theorem~\ref{theo:Dyckpeak}, whose notation we will use here.
    The difference is that here we concatenate the (positive and negative) Dyck paths differently.

    Let a Dyck path with height-labeled peak be given.
    Let $h$ be the height of this peak and $\mu \in \{1,\dots,h\}$ be its label.
    First, we apply the bijection $\varphi$ onto all parts $\LL_i$ in \eqref{eq:DyckPeakDecompLR}.
    From that we get the following bridge in which the height-labeled peak is now a left-to-right maximum (underlined):
    \begin{align*}
        \varphi(\LL_1) \uu \varphi(\LL_2) \uu \dots \uu \varphi(\LL_{h})
        \, \underline{\uu \dd} \,
        \RR_{h} \dd \RR_{h-1} \dd \dots \dd \RR_{1}.
    \end{align*}
    Next, we transform this bridge, such that in the end the height-label $\mu$ constitutes the height of the left-to-right maximum.
    For this purpose, we create and concatenate the paths $\DD_i$ and $\varphi(\DD_i)$ in an alternating fashion at the end:
    \begin{align}
        \label{eq:heightlabeled2LRbridge}
        \varphi(\LL_1) \uu \varphi(\LL_2) \uu \dots \uu \varphi(\LL_{\mu})
        \, \underline{\uu \dd} \,
        \RR_{\mu} \dd \RR_{\mu-1} \dd \dots \dd \RR_{1} 
        \, \varphi(\DD_{\mu+1}) \DD_{\mu+2} \varphi(\DD_{\mu+3}) \dots \DD_h,
    \end{align}
    when $h-\mu$ is even. Otherwise, the last $\DD_h$ is replaced by $\varphi(\DD_h)$.

    For the reverse direction, let a Dyck bridge with marked left-to-right maximum be given. 
    It is then straightforward to decompose it into~\eqref{eq:heightlabeled2LRbridge} and to reverse the steps above to build a Dyck path.
    The left-to-right maximum becomes the height-labeled peak, labeled by its current height. 
    Observe that the height-labeled peak is lifted by the number of crossings of the $x$-axis to the left of this peak.
\end{proof}

Next, we connect our results with yet another class of paths: \emph{$2$-colored bridges}; see~\cite[Section 6.4]{BanderierKubaWallner2022}.
They are defined as the concatenation of two bridges such that the first bridge is colored in color $1$ and the second one in color $2$.
Note that contrary to \cite{BanderierKubaWallner2022}, we allow each part to be empty.
Hence, it is easy to see that its generating function is equal to the square of the  generating function $B(z) = \frac{1}{\sqrt{1-4z^2}}$ of bridges:
\[
    B(z)^2 = \frac{1}{1-4z^2}.
\]
The following Proposition~\ref{prop:2colunconstrained} provides a combinatorial explanation for this equality with the generating function of unconstrained walks.
For its proof we will need the following lemma, which we enrich by the statistics of \emph{negative returns to zero}, which are $\uu$ steps that end on the $x$-axis, i.e., they come from below.

\begin{lemma}[{\!\!\cite{Marchal2003}}]
    \label{lem:bijbridgesmeanders}
    There is an explicit bijection between Dyck bridges with $k$ negative returns to zero and Dyck meanders of the same length~$2n$ ending at altitude~$2k$.
    Therefore the number of such paths is $\binom{2n}{n}$.
\end{lemma}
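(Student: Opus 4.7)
The plan is to exhibit an explicit bijection $\Phi$ between Dyck bridges of length $2n$ with exactly $k$ negative returns to zero and Dyck meanders of length $2n$ ending at altitude $2k$. Summing over $k \geq 0$ refines the elementary count that bridges of length $2n$ number $\binom{2n}{n}$ (choose which $n$ of the $2n$ steps are $\uu$), yielding both halves of the lemma.

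The forward map $\Phi$ proceeds in two stages. First, decompose the bridge $B$ into its sequence of maximal arches $B = C_1 C_2 \cdots C_j$; each $C_i$ is either a positive arch (weakly above the $x$-axis) or a negative arch (weakly below), and the number of negative arches equals $k$. Reflect each negative arch $\dd e \uu$ (with $e$ staying weakly $\leq -1$) across the $x$-axis to obtain the positive arch $\uu \bar e \dd$, where $\bar e$ swaps $\uu \leftrightarrow \dd$ in $e$. Concatenating all arches after these reflections produces a Dyck path of length $2n$. Second, replace the final $\dd$ step of each of the $k$ reflected arches by $\uu$. Since turning a $\dd$ into a $\uu$ only raises subsequent altitudes, the result remains non-negative; a direct inductive check shows that after processing the $i$-th modified arch the whole suffix is lifted by $+2$, and after all $k$ modifications the terminal altitude is $2k$.

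For the inverse $\Psi$, given a meander $M$ of length $2n$ ending at altitude $2k$, locate for each $m = 1, \ldots, k$ the \emph{last} $\uu$ step of $M$ that goes from altitude $2m-1$ to $2m$, and change it to $\dd$. The result is a Dyck path whose arches with just-modified final step are precisely the images of the original negative arches of $B$; reflecting these arches back across the $x$-axis and reassembling recovers $B$. The well-definedness of $\Psi$ rests on the claim that in $\Phi(B)$ the $m$-th modified step really is the globally last $\uu$ from $2m-1$ to $2m$: immediately after that step the path is at altitude $2m$, and all subsequent content (lifted positive arches at altitude $2m$, followed by later modified arches starting at $2m$) stays weakly $\geq 2m$ and therefore never returns to $2m-1$.

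The main obstacle is this altitude-tracking argument. I would handle it by induction on $k$, processing the modifications in left-to-right order and maintaining the invariant that after the $m$-th modification the path is at altitude $2m$ and its future altitudes stay weakly $\geq 2m$. A small check on toy cases such as $B=\dd\dd\dd\uu\uu\uu$ and $B=\uu\dd\dd\dd\uu\uu$ confirms that the selected last $\uu$ step and the forward-modified step really coincide. Combined with the count $\binom{2n}{n}$ for bridges of length $2n$ and the refinement by $k$, the bijection then delivers the lemma.
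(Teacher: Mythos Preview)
Your construction is correct and is precisely the Marchal bijection that the paper cites; the paper does not reproduce the argument but only points to \cite[p.~185 and Fig.~3]{Marchal2003}, so your write-up actually supplies what the paper omits. One cosmetic difference worth noting: the paper's parenthetical remark indicates that in Marchal's formulation the positive portion between two consecutive negative arches is kept as a single (possibly non-minimal) Dyck path of the form $\uu D \dd D$, whereas you further split it into minimal positive arches. Since the positive pieces are left untouched by the map, this changes nothing in the output; your finer decomposition simply makes the altitude-tracking argument for the inverse (locating the last $\uu$ from $2m-1$ to $2m$) a little more uniform to state.
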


\begin{proof}
    See \cite[p.\ 185 and Fig.~3]{Marchal2003}.
    Note that in this paper negative Dyck paths are defined as minimal Dyck paths, i.e., they touch the $x$-axis only at the beginning and at the end: $\dd \varphi(\DD) \uu$. 
    However, classical Dyck paths (i.e., stay always weakly above the $x$-axis) do not need to be minimal and are therefore either empty or the shape $\uu \DD \dd \DD$.
\end{proof}

\begin{remark}[Alternative bijection between even-length Dyck bridges and meanders]
    A different bijection can be derived using the ideas developed for Theorem~\ref{theo:Dyckpeak}. It then connects the number of crossings in the Dyck bridge with the final altitude of the Dyck meander as follows.
    Given a Dyck bridge, cut at each crossing of the $x$-axis. 
    This gives decomposition~\eqref{eq:DyckPeakFinalDecomp} with an additional Dyck path $\DD_0$ at the beginning, that could be empty in contrast to the $\DD_i$ with $i > 0$. 
    Then, we get a meander ending at altitude $2h$ by flipping each negative Dyck path $\varphi(D_i)$ and replacing its final step by $\uu$.
    If the initial excursion $\DD_0$ is non-empty, then $h$ was the number of crossings of the $x$-axis, while, 
    if $\DD_0$ is empty, then the number of crossings was $h-1$.    
\end{remark}

\begin{proposition}
    \label{prop:2colunconstrained}
    There is an explicit bijection between $2$-colored Dyck bridges and unconstrained Dyck walks of the same length $2n$.
\end{proposition}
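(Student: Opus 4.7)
The plan is to decompose an unconstrained Dyck walk $W$ of length $2n$ at its \emph{last visit} to the $x$-axis. Set $t^{*} := \max\{t \in \{0, 2, \ldots, 2n\} : W(t) = 0\}$, which exists because $W(0)=0$. Then $W$ splits uniquely as $W = B \cdot T$, where $B = W[0\,{:}\,t^{*}]$ is a Dyck bridge (of even length $t^{*}$) and $T = W[t^{*}\,{:}\,2n]$ is a \emph{tail} of length $2n - t^{*}$: either empty, a strictly positive walk (first step $\uu$, staying above $0$ thereafter), or a strictly negative walk. This directly yields the identity $4^{n} = \sum_{m=0}^{n} \binom{2m}{m}\binom{2(n-m)}{n-m}$ combinatorially, since both bridges and tails of length $2k$ are counted by $\binom{2k}{k}$.

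The second ingredient is a length-preserving bijection $\phi$ between tails and Dyck bridges, mapping the empty tail to the empty bridge, strictly positive tails to bridges starting with $\uu$, and strictly negative tails to bridges starting with $\dd$. The count matches within each sign class: a strictly positive tail of length $2k$ is an initial $\uu$ followed by a walk from height $1$ of length $2k-1$ staying $\geq 1$, giving $\binom{2k-1}{k-1}$ choices, which matches the number of bridges of length $2k$ starting with $\uu$. The restriction of $\phi$ to each sign class is obtained by a reflection-principle argument, in the same spirit as Marchal's bijection (Lemma~\ref{lem:bijbridgesmeanders}); equivalently, $\phi$ can be realised as the composition of Lemma~\ref{lem:bijbridgesmeanders} with the natural bijection between meanders and tails via reflection at the last zero.

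Composing these two steps produces the desired bijection $W \longleftrightarrow (B_{1}, B_{2})$ where $B_{1} = B$ and $B_{2} = \phi(T)$, with inverse $(B_{1}, B_{2}) \mapsto B_{1} \cdot \phi^{-1}(B_{2})$. Injectivity and surjectivity are immediate from the uniqueness of the last-zero decomposition and the bijectivity of $\phi$.

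The main obstacle I expect is the careful specification of the bijection $\phi$ within each sign class, which requires checking that the reflection operation correctly exchanges ``returning to $0$'' in a bridge with ``staying strictly away from $0$'' in a tail, while preserving the length and the first step. Once this is verified, the rest of the argument is a clean composition.
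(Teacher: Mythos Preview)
Your approach is correct and takes a genuinely different route from the paper's proof. The paper also leaves one of the two bridges intact, but it encodes the gluing point as the \emph{last crossing} of the $x$-axis rather than the last zero: in the generic case (both $B_1,B_2$ nonempty) it converts $B_2$ into a meander via Lemma~\ref{lem:bijbridgesmeanders} and appends it to $B_1$ on whichever side of the axis forces a crossing at the junction; walks with no crossing at all are then handled by three separate degenerate cases. Your last-zero decomposition is conceptually cleaner in that it avoids this four-way case split, but it pushes all the nontrivial work into the auxiliary bijection $\phi$ between tails and bridges, whereas the paper gets its auxiliary map essentially for free from the already-stated Lemma~\ref{lem:bijbridgesmeanders}.

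One caution: your proposed realisation of $\phi$ as ``Lemma~\ref{lem:bijbridgesmeanders} composed with the natural bijection between meanders and tails via reflection at the last zero'' does not parse as stated. A tail has its only zero at time~$0$, so there is nothing to reflect there; and if instead you start from a meander and reflect the excursion preceding its last zero, the result has an interior zero and is not a tail. What you actually need is a direct length-preserving bijection between strict tails and bridges (equivalently, between paths of length $2k$ that never return to $0$ and paths of length $2k$ ending at $0$). Such bijections are classical---e.g.\ the Feller-type construction matching first returns with last-step flips, or a Cycle-Lemma rotation---but Lemma~\ref{lem:bijbridgesmeanders} (bridges $\leftrightarrow$ meanders) is not quite the right tool on its own, and you should spell out an explicit $\phi$ rather than gesture at one.
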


\begin{proof}
    In all cases we will use Lemma~\ref{lem:bijbridgesmeanders} to transform bridges into meanders.
    Sometimes, it will be necessary to transform a meander further into a negative meander, by flipping all steps, i.e., exchanging $\uu$ by $\dd$ and vice versa. 
    
    We distinguish four cases.    
    First, the first and second bridges are non-empty. 
    The idea is that the change in color corresponds to the last crossing of the $x$-axis.    
    For this purpose we transform the second bridge into a meander or negative meander and 
    attach it to the first bridge such that the attached meander continues on the other side of the $x$-axis. 
    We can easily reverse this procedure by cutting at the last crossing of the $x$-axis.
    All the other cases will have no crossings of the $x$-axis.
    Second, if the first bridge is non-empty and the second one is empty, we transform the first bridge into a meander.
    Third, if the first bridge is empty and the second on is non-empty, we transform the second bridge into a negative meander.
    Finally, if both bridges are empty, we map them to the empty walk.    
\end{proof}

The following proposition connects Dyck bridges with marked strict left-to-right maximum and $2$-colored Dyck bridges and ends the chain of bijections of objects all enumerated by $4^{n-1}$.
For a proof using generating functions see~\cite[Theorem~1]{BlecherKnopfmacherProdinger2024Lefttoright}.

\begin{proposition}
    \label{prop:markedlefttoright2coloredbridge}
    There is an explicit bijection between Dyck bridges of length $2n$ with marked strict left-to-right maximum at height $h$ and $2$-colored Dyck bridges of length $2n-2$ with $h-1$ crossings of the $x$-axis in color $1$.
\end{proposition}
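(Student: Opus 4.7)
The plan is to give an explicit bijection by splitting the Dyck bridge at the marked maximum and then applying a last-passage decomposition to the prefix and a first-passage decomposition to the suffix, producing precisely two bridges. Let $B$ be a Dyck bridge of length $2n$ with marked strict left-to-right maximum $p = \uu\dd$ at height $h$, and write $B = P \cdot \uu\dd \cdot S$, where $P$ is the prefix ending at height $h-1$ and $S$ is the suffix starting at height $h-1$. Because $p$ is a \emph{strict} left-to-right maximum, every peak of $P$ has height at most $h-1$; since the step set is $\{\pm 1\}$, this forces $P$ to remain at heights $\leq h-1$ throughout, for any visit to height $h$ would later produce a peak of height $\geq h$ on the way back down.

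Next I apply a last-passage decomposition to $P$: for each $i = 1, \ldots, h-1$, cut $P$ at the final $\uu$ step that leaves height $i-1$. Using the fixed bijection $\varphi$ between Dyck paths and negative Dyck paths from the proof of Theorem~\ref{theo:Dyckpeak}, this yields
\[
P \;=\; \varphi(D_1)\, \uu\, \varphi(D_2)\, \uu\, \cdots\, \uu\, \varphi(D_h)
\]
for Dyck paths $D_1, \ldots, D_h$. Symmetrically, a first-passage decomposition of $S$, cutting at the $\dd$ steps that first reach heights $h-2, h-3, \ldots, 0$, yields
\[
S \;=\; R_{h-1}\, \dd\, R_{h-2}\, \dd\, \cdots\, \dd\, R_1\, \dd\, B'
\]
for Dyck paths $R_1, \ldots, R_{h-1}$ (each placed at its own level) and an arbitrary bridge $B'$ representing the walk after the first return to $0$.

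Then I assemble the 2-colored bridge by interleaving the negative bridges $\varphi(D_i)$ with the positive arches $\uu R_i \dd$ as color~$1$, and placing $B'$ as color~$2$:
\[
C_1 \;=\; \varphi(D_1)\,(\uu R_1 \dd)\,\varphi(D_2)\,(\uu R_2 \dd)\,\cdots\,(\uu R_{h-1}\dd)\,\varphi(D_h), \qquad C_2 = B'.
\]
Tracing heights, $C_1$ is a bridge composed of $h$ negative excursions alternating with $h-1$ positive arches, so color~$1$ exhibits exactly $h-1$ crossings of the $x$-axis (one per positive arch above it). The length bookkeeping
\[
|C_1| + |C_2| \;=\; \textstyle\sum_i |D_i| + \sum_i |R_i| + 2(h-1) + |B'| \;=\; 2n - 2
\]
is exactly two less than $|B|$, accounting for the removed marked peak. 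The inverse map reads the Dyck paths $D_1, \ldots, D_h$ and $R_1, \ldots, R_{h-1}$ off the decomposition of $C_1$ at its positive arches, reinserts the marked peak $\uu\dd$, and sets $B' = C_2$.

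The main obstacle is the careful verification that both decompositions are well defined — for $P$ one uses its confinement to the strip $[0, h-1]$ together with its endpoint $h-1$ to guarantee that the $h-1$ last-passage $\uu$ steps exist, and for $S$ one uses the fact that it ends at $0$ — and that the inverse reconstruction indeed produces a Dyck bridge whose inserted peak is a strict left-to-right maximum (which holds because the reconstructed prefix stays in $[0, h-1]$, so no prior peak can reach height $h$). This provides a bijective alternative to the generating-function argument of~\cite{BlecherKnopfmacherProdinger2024Lefttoright}.
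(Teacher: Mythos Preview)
Your argument has two genuine gaps.

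First, the prefix $P$ is not confined to $[0,h-1]$: since $B$ is a bridge, $P$ may dip below the $x$-axis; your reasoning only establishes $P\le h-1$. Relatedly, the last-passage decomposition you describe --- cutting at the \emph{final} $\uu$ step leaving height $i-1$ --- does not produce negative excursions. After the last such $\uu$, the path stays $\ge i$, so the piece between consecutive cuts starts and ends at height $i-1$ and stays $\ge i-1$: a \emph{positive} excursion, not $\varphi(D_i)$. To obtain negative excursions you must cut at the \emph{first} $\uu$ reaching each new maximum height; then the piece between first reaching $i-1$ and first reaching $i$ stays $\le i-1$. This is the decomposition the paper actually uses (and why it calls its $L_i$ ``negative excursions'').

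Second, and more seriously, even after that repair your $C_1$ does not have exactly $h-1$ crossings of the $x$-axis. It has $h-1$ non-empty positive arches $\uu R_j\dd$ interspersed with $h$ \emph{possibly empty} negative excursions $\varphi(D_i)$, and the number of crossings depends entirely on which $\varphi(D_i)$ are non-empty: it is $0$ when all are empty and $2(h-1)$ when all are non-empty. Hence your map does not land in the target class, and your proposed inverse (``read off the positive arches'') is not the same as ``read off the crossings''. The paper sidesteps this by pairing each $L_i$ with an $R_i$ into a single \emph{guaranteed non-empty} Dyck path $D_i=\uu\,\varphi(L_i)\,\dd\,R_i$ and then laying down $L_1,\,D_2,\,\varphi(D_3),\,D_4,\ldots$ with alternating signs; the forced non-emptiness of $D_2,\ldots,D_h$ is exactly what pins the crossing count and makes the inverse well defined.
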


\begin{proof}
    Let us start with a Dyck bridge with marked strict left-to-right maximum of length $2n$.
    Then, we cut the bridge at the first return to the $x$-axis after this maximum. 
    The second part to the right is a bridge, which we give color $2$.
    Onto the first part we apply a similar idea as in the bijection of Theorem~\ref{theo:Dyckpeak}.
    As before, we cut the path at the marked left-to-right maximum into a left and right part given by $\LL \RR$, such that $\LL$ ends with $\uu$ and $\RR$ starts with~$\dd$.
    Then, we further decompose it similar to~\eqref{eq:DyckPeakDecompLR} into
    \begin{align*}
        \LL &= \LL_1 \uu \LL_2 \uu \dots \uu \LL_{h} \uu,\\
        \RR &= \dd \RR_{h} \dd \RR_{h-1} \dd \dots \RR_{2} \dd,
    \end{align*}
    where $h$ is the height of the peak, the $\LL_i$ are negative excursions, and the $\RR_i$ are excursions.
    Note that in this case $\RR$ ends with a $\dd$ step and contains only $h-1$ Dyck paths $\RR_i$.
    As in the proof of Theorem~\ref{theo:Dyckpeak} we form Dyck paths $D_i = \uu \varphi(\LL_i) \dd \RR_i$ for $i=2,\dots,h$, however, we need to flip the parts $\LL_i$ here.
    Finally, we remove the two steps of the marked peaks, and get the following bridges with $2$ steps less (compare with Equation~\eqref{eq:DyckPeakFinalDecomp}):    
    \begin{align*}
        \LL_1 \DD_2 \varphi(\DD_3) \DD_4 \dots \varphi(\DD_{h-1}) \DD_h,
    \end{align*}
    when $h$ is even.
    For odd $h$ it ends with $\varphi(\DD_h)$.
\end{proof}

After having proven the chain of bijections shown in Figure~\ref{fig:bijections}, we will explore connections between Dyck paths with more space constraints and integer compositions in the next section.

\subsection{Compositions and paths in a strip}
\label{sec:compandDyckstrip}

In this section, we will explore connection between integer compositions and Dyck paths and bridges in strips.

\begin{theorem}
    \label{theo:Dyckbound2}
    There is an explicit bijection between Dyck paths of length $2n$ and height at most~$2$ with $k$ returns to zero and integer compositions of~$n$ with $k$ parts.
\end{theorem}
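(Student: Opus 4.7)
The plan is to exploit the very rigid structure of Dyck paths of height at most $2$ via an arch decomposition. Given such a path $D$ of length $2n$ with $k$ returns to zero, I would cut $D$ at each return to the $x$-axis to obtain a unique factorization $D = A_1 A_2 \cdots A_k$ where each $A_i$ is a minimal Dyck path (touching the $x$-axis only at its endpoints). Each arch $A_i$ has the form $\uu P_i \dd$ where $P_i$ is a Dyck path; because $D$ has height at most $2$, the inner path $P_i$ has height at most $1$, and Dyck paths of height at most $1$ are precisely the concatenations $(\uu\dd)^{a_i - 1}$ for some $a_i \geq 1$. Hence each arch has the normal form
\begin{equation*}
    A_i \;=\; \uu (\uu\dd)^{a_i-1} \dd
\end{equation*}
of length $2a_i$, determined uniquely by a single positive integer $a_i$.

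Next I would define the map by sending $D$ to the tuple $(a_1, a_2, \ldots, a_k)$. Since $|D| = 2n = \sum_i 2a_i$, the tuple is an integer composition of $n$ with exactly $k$ parts. For the inverse, given a composition $(a_1,\dots,a_k)$ of $n$, I would concatenate the blocks $\uu(\uu\dd)^{a_i-1}\dd$ in order; by construction the resulting path has length $2n$, height at most $2$, and exactly $k$ returns to zero (one at the end of each block). It is immediate that the two maps are mutually inverse, since an arch of the above form is fully determined by its length, and the factorization at returns to zero is unique.

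The main (and only mild) obstacle is simply to argue cleanly that a Dyck path of height $\leq 1$ must be of the form $(\uu\dd)^{j}$; but this follows at once by induction, since the first step must be $\uu$ (otherwise we would go negative), it must be immediately followed by $\dd$ (otherwise the height would reach $2$), and then we apply the induction hypothesis to the remainder. With this observation in hand, the proposition follows with no further computation.
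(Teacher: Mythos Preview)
Your proposal is correct and follows essentially the same approach as the paper: both cut the path at its returns to zero into arches $A_1\cdots A_k$ and use the height constraint to show each arch is determined by a single positive integer, yielding a composition of $n$ into $k$ parts. The paper phrases this via the ``$2$-peak profile'' $p_{2,i}$ (number of peaks at height $2$ in $A_i$) and then shifts by $+1$, which is exactly your $a_i = p_{2,i}+1$; your explicit normal form $A_i = \uu(\uu\dd)^{a_i-1}\dd$ makes the same observation.
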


\begin{proof}
    Let a Dyck path $\Dc$ with the claimed properties be given.
    We cut at each return to zero and decompose it into a sequences of arches: $\Dc = A_1 A_2 \dots A_k$. 
    Let now $p_{2,i}\geq 0$ be the number of peaks at level $2$ of $A_i$.
    Note that each $\uu$ step that does not leave the $x$-axis, contributes to a peak by the constraint of height at most $2$.
    Hence the \emph{$2$-peak profile} $(p_{2,i})_{i=1}^{k}$ uniquely characterizes $\Dc$.
    Finally, we need to shift the $2$-peak profile by $+1$ to get the claimed integer composition of $n$. Note that this $+1$ may be associated to the unique $\uu$ leaving the $x$-axis. Hence, each $\uu$ step contributes to exactly one part.
    \begin{align*}
        n = \left( p_{2,1}+1 \right) + 
            \left( p_{2,2}+1 \right) + 
            \dots +
            \left( p_{2,k}+1 \right).
    \end{align*}
    For the inverse bijection one simply reverses the above steps.
\end{proof}

The previous bijection yields the following interpretation of Andrews' $k$-compositions.
\begin{corollary}
    \label{coro:Dyckbound2}
    There is an explicit bijection between Dyck paths of length $2n$ and height at most $2$ in which each return to the $x$-axis bar the last one has one of $k$ colors and Andrews' $k$-compositions. 
    Therefore, they are enumerated by $(k+1)^{n-1}$.
\end{corollary}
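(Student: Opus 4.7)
The plan is to build directly on Theorem~\ref{theo:Dyckbound2}, whose bijection already sends a Dyck path of length $2n$ and height at most $2$ with $j$ returns to zero to an integer composition of $n$ with $j$ parts. In that construction the path is decomposed at each return as $A_1 A_2 \dots A_j$ and the $i$-th arch $A_i$ produces the $i$-th part of the composition via its $2$-peak profile. Hence the first $j-1$ returns to the $x$-axis (i.e.\ all returns \emph{bar the last}) are in natural bijection with the first $j-1$ parts of the composition: the return that closes $A_i$ is paired with the part coming from $A_i$, for $i=1,\dots,j-1$.

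Given a colored Dyck path of the claimed type, I would simply transfer the color of the $i$-th non-last return to the $i$-th part of the associated composition, and leave the last part uncolored---equivalently, forcing it to carry the first of the $k$ colors by convention. Since there are $k$ choices for each of the first $j-1$ returns and exactly one valid color for the last part, the resulting object is precisely an Andrews' $k$-composition of $n$. The inverse map is immediate: from a $k$-composition with $j$ parts, discard the forced color of the last part, apply the inverse of Theorem~\ref{theo:Dyckbound2} to reconstruct the uncolored Dyck path, and redistribute the colors of the first $j-1$ parts back to the corresponding returns to zero.

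For the enumeration I would use a short generating-function computation: a colored non-last part contributes $kz/(1-z)$ and the last part contributes $z/(1-z)$, so the generating function is
\[
    \frac{1}{1 - \tfrac{kz}{1-z}} \cdot \frac{z}{1-z} = \frac{z}{1-(k+1)z} = \sum_{n \geq 1} (k+1)^{n-1} z^n.
\]
There is no genuine obstacle here, since the statement is essentially a coloring enhancement of Theorem~\ref{theo:Dyckbound2}. The only subtle point is aligning the convention that an Andrews composition's last part carries the \emph{first} color with the fact that the terminal return of the Dyck path carries no color at all; once one agrees on that convention, both sides are counted by $k^{j-1}$ given a fixed underlying uncolored object with $j$ returns/parts, and the identification is forced.
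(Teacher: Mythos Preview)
Your proposal is correct and follows essentially the same approach as the paper: the paper's proof simply observes that by Theorem~\ref{theo:Dyckbound2} each return corresponds to one part, so the colors transfer directly, and then remarks that the counting formula also follows from the explicit generating function. Your write-up is more detailed (spelling out the color transfer and the generating-function computation explicitly), but the underlying argument is the same.
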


\begin{proof}
    By Theorem~\ref{theo:Dyckbound2} each return corresponds to one part, hence, each color may be associated to this part and the claim holds.
    An alternative proof of the counting formula follows directly from the explicit generating function of the considered Dyck paths.
    %
\end{proof}

We can further generalize this class of Dyck paths as follows.

\begin{theorem}
    The number of Dyck bridges of length $2n$ in the strip $[-2,2]$ such that each crossing of the $x$-axis is colored by one of $k$ colors, is equal to $2(k+2)^{n-1}$
\end{theorem}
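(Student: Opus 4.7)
The plan is to extend the arch decomposition from Theorem~\ref{theo:Dyckbound2} to bridges in the two-sided strip $[-2,2]$. The key observation is that, because the strip has only two available altitudes on each side of the $x$-axis, every arch will be completely rigid once its length and side are chosen; so a bridge reduces to two independent combinatorial choices: a composition of $n$ recording the arch half-lengths, and a sign vector recording the side of each arch.

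First, I would cut the bridge at every visit to the $x$-axis into a sequence of arches $A_1,\dots,A_\ell$, each staying strictly on one side of $0$. A positive arch of length $2m$ is then forced to be $\uu(\uu\dd)^{m-1}\dd$: after the initial $\uu$ one is at height $1$, from which the only admissible step without returning to $0$ is $\uu$, and from height $2$ the only admissible step inside the strip is $\dd$, so the intermediate walk alternates and must return to $1$ before the final $\dd$. The same rigidity, with steps flipped, holds for negative arches. Hence every arch is encoded by a pair $(m_i,\varepsilon_i)\in\mathbb{Z}_{\geq 1}\times\{+,-\}$, and a crossing of the $x$-axis occurs precisely at those indices $i$ where $\varepsilon_i\neq\varepsilon_{i+1}$.

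The weighted count then becomes a direct computation. For a fixed composition $(m_1,\dots,m_\ell)$ of $n$ with $\ell$ parts, summing $k^{\#\text{crossings}}$ over all sign vectors factorizes as $2(k+1)^{\ell-1}$: two choices for $\varepsilon_1$, and for each subsequent index the contribution is $1$ (keep the sign, no crossing) plus $k$ (flip the sign, giving one of $k$ colored crossings). Summing against the $\binom{n-1}{\ell-1}$ compositions of $n$ with $\ell$ parts and applying the binomial theorem yields
\[
    \sum_{\ell=1}^{n} \binom{n-1}{\ell-1}\cdot 2(k+1)^{\ell-1} = 2\bigl(1+(k+1)\bigr)^{n-1} = 2(k+2)^{n-1}.
\]
The only step requiring any care is the rigidity of arches in the strip, but this is immediate from the shape of the admissible transitions; everything else reduces to this single binomial identity. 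As a cross-check, a generating-function derivation is also available: each positive and each negative arch has half-length generating function $\frac{z}{1-z}$, and solving for the bridge generating function weighted by $k$ at each sign change returns $\frac{2z}{1-(k+2)z}$, whose coefficients recover $2(k+2)^{n-1}$.
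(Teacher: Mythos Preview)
Your proof is correct, but it proceeds by a genuinely different decomposition than the paper's. The paper cuts the bridge only at the \emph{crossings} of the $x$-axis, so each piece is a one-sided bridge in $[0,2]$ (after a possible flip), and then invokes Theorem~\ref{theo:Dyckbound2} to identify each piece with a composition of its half-length $n_i$, contributing a factor $2^{n_i-1}$; summing over the number of crossings and the ordered sums $n_1+\dots+n_i=n$ with the extra $k^{i-1}$ for the colored crossings gives $(k+2)^{n-1}$, and the global factor $2$ comes from the choice of the first step. You instead cut at \emph{every} return to $0$, observe that in the strip an arch of half-length $m$ is completely rigid on each side, and then encode the bridge as a single composition of $n$ together with a sign vector; the $k$-weight on crossings becomes the transfer factor $1+k$ between consecutive signs, and a single binomial sum finishes.

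The trade-off: the paper's argument reuses Theorem~\ref{theo:Dyckbound2} as a black box and therefore emphasizes the modular structure (bridge $=$ colored sequence of height-$2$ Dyck paths), at the cost of a double sum. Your argument is self-contained---it does not need Theorem~\ref{theo:Dyckbound2} at all, since the arch rigidity in $[-2,2]$ is immediate---and collapses everything to one binomial identity; it also makes the sign/crossing bookkeeping more transparent. Both routes are short, but yours is the more elementary one here.
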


\begin{proof}
    Note that half of the paths start with an up step and half of them with a down step, which explains the factor $2$.
    Let us enumerate the paths starting with an up step.
    First, we cut at each crossing of the $x$-axis.
    Each part (after a flip along the $x$-axis) is an instance of a Dyck path of length $2n_i$ of height at most $2$.
    Hence, by Theorem~\ref{theo:Dyckbound2} the $i$th part is in bijection with a composition of $n_i$ and therefore enumerated by $2^{n_i-1}$.
    Finally, we attribute the color of the crossing to the part starting at that crossing. 
    Hence, each, except the first part, may have one of $k$ colors.
    Therefore, the total number of such paths is
    \begin{align*}
        \sum_{i=1}^n \sum_{\substack{n_1 + \dots + n_i = n \\ n_i \geq 1}} k^{i-1} 2^{n_1 + \dots + n_i-i}
        = 2^{n-1} \sum_{i=1}^n \left( \frac{k}{2} \right)^{i-1} \binom{n-1}{i-1}
        = \left( k + 2 \right)^{n-1},
    \end{align*}
    proving the claim.
\end{proof}

We leave it as an open problem to build an explicit bijection between these bridges starting with an up step and $(k-1)$-compositions.
Inspired by our proved bijections, we will study arithmetic properties of classes of Dyck paths that are hard to enumerate in the next section.

\section{Congruences and Peak Profiles for Paths}
\label{sec:congruences}

Arithmetic properties of integer compositions and partitions have been studied in detail for the greater part of the last century \cite{MR2280873, MR1745012, MR3373236, MR0929094, MR3223291, MR0266853, MR4344032}. 
Motivated by our previous connections between compositions and Dyck paths, we show now arithmetic properties of certain classes of Dyck paths without explicit and ``simple'' closed forms in terms of formulas or generating functions. 

One such class are Dyck paths in which we restrict the number of allowed peaks per level. 
Recall from Section~\ref{sec:PeaksAndPaths} that a peak $\uu\dd$ has height $h$ if its $\uu$ step ends on height $h$.
\begin{definition}[Peak profile]
    \label{def:peakprofile}
    The \emph{peak profile} $(p_{i})_{i >0}$ of a Dyck path $\Pc$ is a sequence of non-negative integers such that $p_i$ is equal to the number of peaks of height $i$ in $\Pc$
\end{definition}

Previously, the equivalent concept of a \emph{peak heights multiset} appeared in the literature. 
For a given Dyck path, it is the multiset of all heights of peaks. 
Callan and Deutsch proposed the problem~\cite{CallanDeutsch2012PeakHeightsProblem} of counting all distinct peak height multisets (or equivalently, peak profiles) for Dyck paths of length $2n$. 
The answer is $2^{n}  - \sum_{k=0}^{n-1} p(k)$ where $p(k)$ is the number of integer partitions of $k$; see~\cite{Delorme2014PeakHeightsSolution} and \oeis{A208738}.
This formula can also be interpreted as the difference of the number of compositions of $n+1$ and the number of $1$s in all partitions of $n$~\cite[Result~4]{DastidarGupta2013Stanley}.

Note that in the proof of Theorem~\ref{theo:Dyckbound2}, we already encountered a special case of this concept in terms of the $2$-peak profile.
This definition is motivated by the profile of rooted trees, which associates to each tree the sequence $(t_i)_{i \geq 0}$, where $t_i$ is the number of nodes at depth $i$; see~\cite{Drmota2009}.
In particular, by the glove bijection~\cite{FlaSe2009}, which maps Dyck paths of length~$2n$ to rooted plane trees with $n+1$ nodes, the peaks of height $i$ are exactly the leaves at depth~$i$. 
Thus, the peak profile corresponds to the \emph{leaf profile} of rooted plane trees.

\smallskip

In this section, we will study Dyck paths with given peak profile.
For a given parameter $r \in \mathbb{N}$ we will focus on the class of profiles $P_r = \{ (p_i)_{i > 0} ~|~ \exists k \geq 0:  p_1 = \dots =p_k = r, p_i = 0 \text{ for } i > k \}$. In other words, the Dyck paths have exactly $r$ peaks at each reached depth. 
By the previous discussion, these are in bijection with trees that have exactly $r$ leaves in each reached level.
Let $D_r(n)$ be the number of Dyck paths whose profile is in $P_r$.
Note that the counting sequences for $r=1$ and $r=2$ are listed in the OEIS:
 $(D_1(n))_{n \geq 0} = (1, 1, 0, 2, 0, 4, 6, 8, 24, \dots)$ is \oeis{A287846} 
and 
$(D_2(n))_{n \geq 0} = (1, 0, 1, 0, 0, 3, 6, 0, 9, \dots)$ is \oeis{A287845}.
Even though these numbers are difficult to compute, our techniques will allow us to derive certain arithmetic properties. 
The main result of this section will be Theorem~\ref{theo:rplus1div}, in which we will show that $r+1$ divides $D_r(n)$ for $n>r$.

\smallskip 

As a motivation of such a result, let us first consider some simpler peak profiles.
First, let $\Delta = \{(p_i)_{i>0} ~:~ 0 \leq p_i \leq 1 \text{ for all } i >0 \}$, which means that no two peaks occur at the same height.
Let $D_{\Delta}(n)$ be the  number of such Dyck paths of length $n$. 
For these no closed form is known, but we can derive combinatorially the following congruence.

\begin{lemma}
    We have $D_{\Delta}(n) \equiv 1 \mod (2)$.
\end{lemma}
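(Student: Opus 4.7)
The plan is to use a swap-of-arches involution. Recall that every Dyck path decomposes uniquely as a concatenation of arches $P = A_1 A_2 \cdots A_k$, where an arch is a primitive Dyck path (touching the $x$-axis only at its two endpoints). In a path whose profile lies in $\Delta$, the peak heights of distinct arches must be pairwise disjoint. I would define the map $\sigma$ on $\Delta$-paths with at least two arches by swapping $A_1$ and $A_2$ and leaving the remaining arches fixed. Clearly $\sigma$ is an involution, it sends Dyck paths to Dyck paths, and it preserves the peak profile, so it indeed acts on the $\Delta$-class.

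A fixed point of $\sigma$ would satisfy $A_1 = A_2$; but every arch contains at least one peak, so this would produce two arches with identical, non-empty peak-height sets, violating the $\Delta$-condition. Hence $\sigma$ is fixed-point free, and the number of $\Delta$-paths with $\geq 2$ arches is even. It therefore suffices to determine the parity of the number of $\Delta$-paths with at most one arch.

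For $n = 0$ this is only the empty path, contributing $1$. For $n \geq 1$ such a path is primitive, of the form $\uu Q \dd$ where $Q$ is a Dyck path of length $2n-2$; the peak profile of $\uu Q \dd$ is either $\{1\}$ (if $Q$ is empty) or that of $Q$ shifted up by $1$, so $\uu Q \dd$ lies in $\Delta$ iff $Q$ does. Thus the count of primitive $\Delta$-paths equals $D_{\Delta}(n-1)$, which gives the parity reduction $D_{\Delta}(n) \equiv D_{\Delta}(n-1) \pmod{2}$, and an immediate induction from $D_{\Delta}(0) = 1$ yields the claim. I do not anticipate any significant obstacle; the only subtle point is confirming that $A_1 = A_2$ is impossible in $\Delta$, which follows at once from the fact that every arch carries at least one peak.
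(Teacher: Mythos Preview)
Your argument is correct, but it differs from the paper's. The paper uses a single global involution: reversal of the path (read right-to-left). A $\Delta$-path fixed by reversal would have its peaks paired symmetrically at equal heights, so the only possible fixed point is the pyramid $\uu^n\dd^n$; hence exactly one fixed point and the count is odd. Your route instead combines a fixed-point-free involution on the multi-arch $\Delta$-paths (swap $A_1\leftrightarrow A_2$) with the stripping bijection $\uu Q\dd \leftrightarrow Q$ on primitive paths, reducing the parity question to $D_\Delta(n-1)$ and finishing by induction. The paper's proof is shorter and pinpoints the ``odd'' witness in one stroke, whereas your arch-swap has the virtue that its lack of fixed points is immediate (no analysis of palindromic paths is needed), at the cost of an extra inductive step.
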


\begin{proof}
    For a given path, we can associate another path to it by traversing it from right-to-left instead of left-to-right. 
    This involution maps the path with only one peak to itself, 
    while it maps all other paths to a different path. 
    Hence the total number is always odd. 
\end{proof}

\smallskip

As a second introductory example, consider the profile class $S_r = \{(p_i)_{i>0} ~:~ \sum_{i >0 } p_i = r \}$ that characterizes Dyck paths with exactly $r$ peaks. 
It is well known, that the number $D_{S_r}(n)$ of associated Dyck paths of length $n$ is equal to the famous Narayana numbers $N(n,r)=\frac{1}{n} \binom{n}{r}\binom{n}{r-1}$.
The following divisibility result follows directly from \cite[Theorem~1.2]{BonaSagan2005NarayanaDiv}.

\begin{lemma}
    Let $e \geq 2$ and $n,k\geq0$ be integers. Then it holds that
    \begin{align*}
        N(2^e n + 2, 2^e k + 1) &\equiv N(2^e n + 2, 2^e k +2 ) \mod (2), \\
        N(2^e n + 2, 2^e k + i) &\equiv 0 \mod (2) & \text{ for } 3 \leq i \leq 2^e.
    \end{align*}
\end{lemma}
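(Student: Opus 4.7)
The plan is to invoke the divisibility criterion of Bóna and Sagan, \cite[Theorem~1.2]{BonaSagan2005NarayanaDiv}, which expresses the $2$-adic valuation of a Narayana number in terms of Kummer-style carry counts in base-$2$ arithmetic. Once that translation is set up, the lemma reduces to a short case analysis on the bottom $e$ binary digits of the arguments.

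Concretely, I would start from the identity
\[
    v_2\!\bigl(N(n,r)\bigr) = v_2\!\tbinom{n}{r} + v_2\!\tbinom{n}{r-1} - v_2(n),
\]
specialized to $n = 2^e m + 2$. Since $e \geq 2$, the integer $n = 2(2^{e-1}m+1)$ satisfies $v_2(n) = 1$, and its bottom $e$ binary digits form the fixed pattern $(0,1,0,\dots,0)$. Kummer's theorem identifies each $v_2\!\binom{n}{s}$ with the number of carries produced when adding $s$ and $n-s$ in base $2$, so the parity of $N(n,r)$ is controlled entirely by these carry counts.

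For $i \in \{1,2\}$, a direct inspection of the low $e$ bits of $r$, $n-r$, $r-1$, and $n-r+1$ shows that exactly one of the two binomials contributes a single low-bit carry, while the high-bit contributions are identical for both binomials (they depend only on the base-$2$ addition of $k$ and $m-k$, with no carry entering bit $e$). Consequently the two binomials have valuations $c$ and $c+1$ for some $c \geq 0$, giving $v_2(N(n,r)) = 2c$. Hence the parity of $N(2^e m + 2,\, 2^e k + i)$ is independent of $i \in \{1,2\}$, which establishes the first stated congruence.

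For $3 \leq i \leq 2^e$, the aim is to show that the combined number of low-bit carries in the two additions is at least two, yielding $v_2(N(n,r)) \geq 1$. I expect the main obstacle to be the bookkeeping in the edge case $i = 2^e$, where $r$ is a multiple of $2^e$ and the addition $(r-1)+(n-r+1)$ propagates carries across the entire bottom block of $e$ bits; here one must verify that the carry exiting bit $e-1$ does not cancel a high-bit carry contribution. Once this is checked case by case on the binary expansion of $i$, the stated divisibility follows, and invoking Bóna and Sagan's general theorem absorbs the remaining arithmetic uniformly.
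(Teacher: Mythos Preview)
The paper does not actually give its own proof of this lemma: the entire argument in the paper is the single sentence ``follows directly from \cite[Theorem~1.2]{BonaSagan2005NarayanaDiv}.'' So there is nothing to compare your argument against beyond that citation.

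Your proposal also cites B\'ona--Sagan, but what you then write is not really an invocation of their theorem; it is an independent direct computation from the trivial identity $v_2(N(n,r)) = v_2\binom{n}{r} + v_2\binom{n}{r-1} - v_2(n)$ together with Kummer's carry theorem. That is a perfectly valid route, and your treatment of the cases $i\in\{1,2\}$ is correct: with $n=2^e m+2$ one has $v_2(n)=1$, the low $e$ bits of the four relevant summands are exactly as you describe, no carry enters bit~$e$, and both Narayana numbers come out with valuation $2c_0$ where $c_0$ is the carry count in $k+(m-k)$. For $3\le i\le 2^e$ your sketch is also on the right track, but be aware that the clean statement ``the combined number of low-bit carries is at least two'' is not quite uniform: at $i=3$ one of the two binomials has \emph{zero} low-bit carries (the pair of low blocks is $(2,0)$), and at $i=2^e$ the high blocks shift from $(k,m-k)$ to $(k+1,m-k-1)$ and $(k,m-k-1)$ with a carry entering bit~$e$. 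In both edge cases one still gets $v_2(N)\ge e-1\ge 1$, but the bound comes from the factor $e$ appearing in the other binomial rather than from ``two low carries.'' So the case analysis you defer does need to be carried out, and the justification is slightly different from the heuristic you state. None of this is an obstruction; it just means the final paragraph of your plan should be executed rather than waved at, after which your argument stands on its own without needing B\'ona--Sagan at all.
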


It remains to find a combinatorial interpretation of these results. 
As a direct corollary, one can consider the profile class $S_{\leq r} = \{(p_i)_{i>0} ~:~ \sum_{i >0 } p_i \leq r \}$ that characterizes Dyck paths with at most $r$ peaks. 
Then it holds that the associated number $D_{S_{\leq r}}(n)$ of Dyck paths of length $n$ is divisible by $2$ for all $n \equiv 2 \mod (2^e)$ and $r \not\equiv 1 \mod (2^e)$.

\smallskip

Our goal in this section is to study the divisibility properties of Dyck paths with exactly $r$ peask \emph{per reached level} associated with $P_r$.
For this purpose, we will need the class of \emph{little Schröder paths} that we discuss in the next section.

\subsection{Little and large Schr\"oder paths}

These are lattice paths with steps $\uu = (1,1)$, $\dd = (1,-1)$, and $\hh_2 = (2,0)$, without $\hh_2$ steps at height zero, that start at $(0,0)$, end at $(2n,0)$, and always stay weakly above the $x$-axis.
\emph{Large Schröder paths} are defined as above, but the steps $\hh_2$ are allowed everywhere.
Note that the semi-length is given by the sum of $\uu$ and $\hh_2$ steps.
The number of little Schröder paths of semi-length $n$ is given by~\oeis{A001003}.
When fixing the number of $\uu$ steps, they satisfy the following simple formula.
    
\begin{lemma}
    \label{lem:Schroederusteps}
    Let $s_{n,i}$/$\ell_{n,i}$ be the number of little/large Schröder paths of semi-length $n$ with $i$ steps $\uu$. They satisfy the following closed forms
    \begin{align*}
        s_{n,i} &= \frac{1}{n+1} \binom{n+i}{i} \binom{n-1}{i-1}, \\
        \ell_{n,i} &= \frac{1}{i+1} \binom{n+i}{i} \binom{n}{i}.
    \end{align*}
    The sequence for $s_{n,i}$ is \oeis{A033282}, also counting the dissections of a complex $(n+1)$-gon into $i$ regions.
    The sequence for $\ell_{n,i}$ is \oeis{A088617}.
\end{lemma}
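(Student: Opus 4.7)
The plan is to handle the two formulas separately, with a direct counting argument for the large case and a generating-function approach (via Lagrange inversion) for the little case.

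For $\ell_{n,i}$, I would first observe that any Schr\"oder path of semi-length $n$ with exactly $i$ up-steps must have $i$ down-steps and $n-i$ horizontal steps $\hh_2$, for a total of $n+i$ steps. In the large case, the $\hh_2$ steps may be freely interspersed, so one selects their $n-i$ positions among the $n+i$ step-slots in $\binom{n+i}{n-i}$ ways; the remaining $2i$ slots must then be filled with a Dyck word of length $2i$ (independently of the $\hh_2$ placements, since horizontal steps do not alter the height process), contributing $C_i = \tfrac{1}{i+1}\binom{2i}{i}$ choices. A short binomial rearrangement yields the stated $\ell_{n,i} = \tfrac{1}{i+1}\binom{n+i}{i}\binom{n}{i}$.

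For $s_{n,i}$, the constraint that no $\hh_2$ step sits at height $0$ obstructs this direct approach, so I would pass to the bivariate generating functions $L(z,u) = \sum \ell_{n,i} z^n u^i$ and $S(z,u) = \sum s_{n,i} z^n u^i$. The standard arch decompositions give $L = 1 + zL + uzL^2$ and $S = 1 + uz L S$. A short algebraic manipulation combining these two equations produces the clean relation $S - 1 = \tfrac{u}{1+u}(L-1)$. Setting $N = L-1$, so that $N = z(1+N)(1+u+uN) =: zG(N)$, Lagrange inversion yields $[z^n] N = \tfrac{1}{n}[t^{n-1}] G(t)^n$. Factoring $G(t)^n = (1+u)^n(1+t)^n\bigl(1+\tfrac{u}{1+u}t\bigr)^n$ and extracting $[t^{n-1}u^i]$ from $\tfrac{u}{1+u}[z^n]N$ expresses $s_{n,i}$ as an explicit single sum of triple binomial products.

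The main obstacle is then collapsing that sum into the claimed closed form. I would use the identity $\binom{n}{k}\binom{n-k}{n-i} = \binom{n}{i}\binom{i}{k}$ to factor $\binom{n}{i}$ out of the sum, then apply Vandermonde's convolution to evaluate the remaining sum as $\binom{n+i}{i-1}$; a final binomial rearrangement using $\binom{n+i}{i-1} = \tfrac{i}{n+1}\binom{n+i}{i}$ and $\binom{n}{i} = \tfrac{n}{i}\binom{n-1}{i-1}$ produces the stated formula. As an independent combinatorial check, I would invoke the classical fact that $s_{n,i}$ equals the number of dissections of a convex $(n+2)$-gon into $i$ regions by $i-1$ non-crossing diagonals, for which the Kirkman--Cayley formula gives the same count.
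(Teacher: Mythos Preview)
Your proof is correct. Both your approach and the paper's ultimately pass through generating functions and Lagrange inversion for $s_{n,i}$, but the routes differ in interesting ways.

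For $\ell_{n,i}$, the paper simply says the result ``follows analogously'' from the generating-function machinery, whereas you give a direct bijective count: choose the $n-i$ positions of the $\hh_2$ steps among the $n+i$ slots, then fill the remaining $2i$ slots with a Dyck word. This is more elementary and more transparent than the paper's argument, and it explains immediately why the Catalan factor appears.

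For $s_{n,i}$, the paper sets up the same pair of equations $S = 1/(1-tuL)$ and $L = 1 + tL + tuL^2$, but then eliminates $L$ to obtain a quadratic for $S$ alone, identifies $tS(t,u)$ with the known generating function for polygon dissections from Flajolet--Noy, and cites that reference for the Lagrange-inversion extraction. Your route is more self-contained: you derive the clean linear relation $S-1 = \tfrac{u}{1+u}(L-1)$, apply Lagrange inversion directly to $N = L-1$ with $G(t) = (1+u)(1+t)\bigl(1+\tfrac{u}{1+u}t\bigr)$, and then carry out the coefficient extraction and Vandermonde collapse by hand. This avoids the external reference at the cost of one page of binomial algebra; the paper's version is shorter but less explicit.

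One small remark: your independent check via the Kirkman--Cayley formula refers to dissections of a convex $(n+2)$-gon into $i$ regions, which is indeed the correct indexing for $s_{n,i}$ (consistent with the shift $D(t,u) = tS(t,u)$ in the paper's proof); the lemma statement's ``$(n+1)$-gon'' reflects a slightly different indexing convention.
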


\begin{proof}
    Define the bivariate generating functions $S(t,u) = \sum_{n,i \geq 0 } s_{n,i} t^n u^i$  and $L(t,u) = \sum_{n,i \geq 0} \ell_{n,i} t^n u^i$.
    From the definitions, we directly get
    \begin{align*}
        S(t,u) &= \frac{1}{1-tu L(t,u)}, \\
        L(t,u) &= 1 + tL(t,u) + tuL(t,u)^2.
    \end{align*}
    Therefore $S(t,u)$ satisfies the equation
    \begin{align*}
        t(u+1)S(t,u)^2 - (t+1)S(t,u) + 1 = 0.
    \end{align*}		
    Thus $D(t,u)=t S(t,u)$ is the generating function of dissections of a convex $(n+1)$-gon into $i$ regions; see~\cite[Theorem~3(i) and Section~3.1]{FlajoletNoy1999Non-crossing}.
    The coefficients of $D(t,u)$ can be extracted using Lagrange inversion. The result for large Schröder paths follows analogously.
\end{proof}

\begin{remark}
    An alternative derivation of $S(t,u)$ uses the well-known generating function $D(t,u) = \sum_{n,i\geq0} d_{n,i} t^n u^i$ of Narayana numbers 
    $d_{n,i} = \frac{1}{n} \binom{n}{i-1} \binom{n}{i}$: number of Dyck paths of semi-length $n$ with $i$ peaks (and $i-1$ valleys).
    Choose a subset of the $i-1$ valleys and replace each $\dd \uu$ in the chosen valley by $\hh_2$. 
    The semi-length stays the same and there are no $\hh_2$ steps on the $x$-axis. 
    Moreover, all such paths are different and we create all little Schröder paths.
    Hence, this proves $S(t,u) = D(t+1,u)$; see the comment by Paul Boddington in \oeis{A033282}.
\end{remark}

Before we give the main result of this section, let us comment on the following ``combinatorial curiosity''~\cite[p.~216]{FlajoletNoy1999Non-crossing} nicely linking with the results of the previous section. 
Let $s_r = \sum_{i \geq 0} s_{r,i}$ be the total number of little Schröder paths, also known as \emph{Schröder numbers}, \emph{super-Catalan numbers}, or \emph{Schröder--Hipparchus numbers}; see~\cite{Stanley1997Hipparchus}.

\begin{proposition}
    \label{prop:rplus1tuples}
    Dyck paths of height $2$ with exactly $r$ peaks at each height are in bijection with $(r+1)$-tuples of integer compositions (including the composition of $0$ into $0$) with grand total sum $r$. Both are enumerated by $(r+1)s_r$.
\end{proposition}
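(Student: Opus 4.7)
The plan is to combine an arch decomposition of the paths with a direct count of the composition tuples, matched against the closed form from Lemma~\ref{lem:Schroederusteps}.

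First I would use the observation that every Dyck path of height at most~$2$ factors uniquely as a concatenation of arches (minimal subpaths returning to the $x$-axis), where each arch is either a \emph{low arch} $\uu\dd$ contributing one peak at height $1$ (and none at height $2$), or a \emph{high arch} $\uu(\uu\dd)^m\dd$ for some $m \geq 1$ contributing exactly $m$ peaks at height $2$ (and none at height $1$). Consequently, the profile condition of having $r$ peaks at each reached height is equivalent to requiring exactly $r$ low arches together with some high arches whose $m$-values sum to~$r$.

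For the bijection, the $r$ low arches divide the path into $r+1$ ordered slots (before the first low arch, between consecutive ones, and after the last). In the $i$-th slot I read off the sequence of $m$-values of the high arches placed there, obtaining an integer composition $C_i$, with $C_i$ being the empty composition of $0$ exactly when the slot contains no high arch. The map $\Pc \mapsto (C_0,\dots,C_r)$ is reversible by placing the prescribed high arches in the prescribed slots and interleaving them with the $r$ low arches, and by construction $|C_0|+\cdots+|C_r|=r$.

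Finally, to count these tuples I would group by the total number of parts $i=|C_0|+\cdots+|C_r|$. There are $\binom{r-1}{i-1}$ compositions of $r$ into $i$ positive parts, and distributing them in order across the $r+1$ slots (a weak composition of $i$ into $r+1$ non-negative pieces) gives $\binom{r+i}{i}$ choices. Thus the total is $\sum_{i\geq 1} \binom{r-1}{i-1}\binom{r+i}{i}$, and Lemma~\ref{lem:Schroederusteps} identifies each summand as $(r+1)s_{r,i}$, yielding $(r+1)s_r$. The argument is largely bookkeeping; the only subtleties are the careful treatment of empty slots (corresponding to the empty composition of $0$) and the degenerate case $r=0$ (an empty path paired with the tuple containing a single empty composition), both handled by the stated convention.
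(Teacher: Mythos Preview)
Your bijection is exactly the one in the paper: remove the $r$ hills (your ``low arches'') to cut the path into $r+1$ slots, then record the run of height-$2$ peaks in each slot as a composition via Theorem~\ref{theo:Dyckbound2}. For the count, the paper defers to~\cite{FlajoletNoy1999Non-crossing} or to the lifting computation in Theorem~\ref{theo:rplus1div} (see Remark~\ref{rem:proofcntrtuples}), whereas you count the tuples directly; but your summand $\binom{r-1}{i-1}\binom{r+i}{i}$ is precisely the coefficient appearing in~\eqref{eq:liftupdown}, so the two enumerations coincide and your version has the minor advantage of being self-contained at this point in the text.
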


\begin{proof}
    Let Dyck path with the claimed properties be given.
    First, we decompose the path into an $r+1$ tuple by removing the $r$ hills (i.e., peak at height $1$).
    The first part is the subpath before the first hill, 
    the second part is the subpath between the first and second hill, and so on. 
    Note that these parts are either empty or contain only peaks at height $2$.
    
    Second, we map each part to an integer composition as in Theorem~\ref{theo:Dyckbound2}.
    If the part is empty, it is mapped to~$0$.
    Otherwise, we cut at the returns to zero and map each subpart to the number of peaks at height $2$.
    As the total sum of peaks at height $2$ is $r$ the grand total sum of the compositions is also $r$.

    For the inverse map, it suffices to reverse these steps.
    Finally, the enumeration sequence was derived in~\cite[p.~216]{FlajoletNoy1999Non-crossing}. Alternatively, it follows from the proof of Theorem~\ref{theo:rplus1div}; see Remark~\ref{rem:proofcntrtuples}.
\end{proof}


This result allows us to give the following interpretation for the central binomial coefficients. Note that it also directly gives a new interpretation for the Catalan numbers~$\frac{1}{r+1}\binom{2r}{r}$, imposing that number of 2-pyramids never exceeds the number of hills (or vice versa).

\begin{corollary}
    Dyck paths with exactly $r$ hills ($\uu \dd$) and $r$ pyramids of height $2$ ($\uu \uu \dd \dd$) are in bijection with Dyck bridges of semi-length $r$ and $(r+1)$-tuples of integer compositions consisting only of $1$s with grand total sum $r$.
    Hence, all these objects are enumerated by $\binom{2r}{r}$.    
\end{corollary}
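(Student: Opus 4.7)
The plan is to deduce the corollary as the special case of Proposition~\ref{prop:rplus1tuples} in which every composition of the $(r+1)$-tuple uses only parts of size $1$, and then to overlay a second bijection to Dyck bridges of semi-length $r$ via a standard stars-and-bars encoding.

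First I would verify that the condition ``exactly $r$ hills and $r$ pyramids of height $2$'' is equivalent to the path being a concatenation of $2r$ arches, each of which is either a hill $\uu\dd$ or a pyramid $\uu\uu\dd\dd$; in particular such a path has height at most $2$. Applying Proposition~\ref{prop:rplus1tuples}, the $(r+1)$-tuple associated with such a path lists, for each of the $r+1$ slots between consecutive hills, the composition obtained by counting peaks at height $2$ in each arch of that slot. Since every arch of height $2$ is now a pyramid, which contributes exactly one peak at height $2$, every part of every composition equals $1$; conversely, a $1$-only composition with $n_i$ parts in slot $i$ encodes $n_i$ consecutive pyramids, and overall $\sum_{i=0}^{r} n_i = r$ pyramids.

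A $1$-only composition is determined by its length, so the data reduces to an $(r+1)$-tuple of non-negative integers $(n_0,n_1,\ldots,n_r)$ with $n_0+n_1+\cdots+n_r=r$. By stars and bars there are exactly $\binom{2r}{r}$ such tuples, which settles the enumeration. To realize the bijection with Dyck bridges of semi-length $r$, I would use the encoding
\[
    (n_0,n_1,\ldots,n_r) \longmapsto \uu^{n_0}\dd\uu^{n_1}\dd\cdots\dd\uu^{n_r},
\]
which yields a lattice path of length $2r$ with $r$ up steps and $r$ down steps, that is, a bridge of semi-length $r$; the inverse simply reads off the lengths of the maximal runs of $\uu$ steps between consecutive $\dd$ steps. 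Equivalently, mapping each hill to $\uu$ and each pyramid to $\dd$ turns the arch decomposition of the Dyck path directly into such a bridge.

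\textbf{Main obstacle.} There is no substantive technical difficulty once Proposition~\ref{prop:rplus1tuples} is invoked; the only delicate point is to pin down the intended interpretation of the statement so that ``exactly $r$ hills and $r$ pyramids'' accounts for \emph{all} $2r$ arches of the path. With this in place, the parenthetical remark about the Catalan numbers $\frac{1}{r+1}\binom{2r}{r}$ becomes transparent as well: requiring that the number of pyramids in every prefix never exceed the number of hills is exactly, under the correspondence hill~$\mapsto \uu$ and pyramid~$\mapsto \dd$, the condition that the resulting bridge stay weakly above the $x$-axis, yielding a Dyck path.
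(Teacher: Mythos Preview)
Your proposal is correct and follows essentially the same approach as the paper: the paper's proof also replaces each hill by a step $\uu$ and each pyramid by a step $\dd$ (or vice versa) to obtain the Dyck bridge, invokes Proposition~\ref{prop:rplus1tuples} for the $(r+1)$-tuple bijection, and then notes that the $i$th composition corresponds to a run of up steps in the bridge---exactly your stars-and-bars encoding. The only minor slip is that your explicit map $(n_0,\dots,n_r)\mapsto \uu^{n_0}\dd\uu^{n_1}\dd\cdots\dd\uu^{n_r}$ sends pyramids to $\uu$ and hills to $\dd$, whereas your subsequent ``equivalently'' clause swaps the roles; both yield bridges, so this is harmless (the paper itself writes ``or vice versa'').
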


\begin{proof}
    For the first bijection, replace each hill by an $\uu$ step and each pyramid of height $2$ by a $\dd$ step (or vice versa).
    The second part directly follows from Proposition~\ref{prop:rplus1tuples}.
    Combining both bijections, observe that the $i$th composition, corresponds to the run of up steps after the $i$th down step; see~\cite{AsinowskiHacklSelkirk2022Down}.
\end{proof}

\subsection{Dyck paths with exactly r peaks per reached level}

After this discussion of Schröder paths, we are ready to prove the following divisibility property.

\begin{theorem}
\label{theo:rplus1div}
Let $D_r(n)$ be the number of Dyck paths with semi-length $n$ and with exactly $r$ peaks for every reached height. 
Then $D_r(n) \equiv 0 \mod (r+1)$ for $n>r$.
\end{theorem}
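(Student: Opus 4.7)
The plan is to exhibit a free action of the cyclic group $\mathbb{Z}/(r+1)\mathbb{Z}$ on the set counted by $D_r(n)$ whenever $n>r$; divisibility by $r+1$ then follows at once because every orbit will have size exactly $r+1$.

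First, I would exploit that the defining condition $p_1=r$ forces every path $\Pc$ in the class to contain exactly $r$ hills (occurrences of $\uu\dd$ touching the $x$-axis). Cutting $\Pc$ at these $r$ hills gives the unique decomposition
\[
    \Pc \;=\; P_0 \, \uu\dd \, P_1 \, \uu\dd \, \cdots \, \uu\dd \, P_r,
\]
where each $P_i$ is a (possibly empty) concatenation of non-hill arches of $\Pc$. This is the natural generalization to arbitrary height of the slot decomposition already used for the height-$2$ case in Proposition~\ref{prop:rplus1tuples}.

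Second, I would define the cyclic shift of slots
\[
    \sigma(\Pc) \;=\; P_1 \, \uu\dd \, P_2 \, \uu\dd \, \cdots \, \uu\dd \, P_r \, \uu\dd \, P_0,
\]
and observe that $\sigma$ merely permutes the slots while preserving both the multiset of hills and the multiset of non-hill arches. Therefore $\sigma(\Pc)$ is again a Dyck path with exactly the same peak profile and lies in the class, and since $\sigma^{r+1}=\mathrm{id}$ we obtain an action of $\mathbb{Z}/(r+1)\mathbb{Z}$ on the set counted by $D_r(n)$.

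The crux is then to verify that this action is free when $n>r$. A non-trivial stabilizer would force the tuple $(P_0,\dots,P_r)$ to be cyclically periodic with some period $d\mid (r+1)$, $d<r+1$; writing $m=(r+1)/d\geq 2$, the total number of peaks at each reached height $h\geq 2$ would become $m$ times the corresponding count inside a single period, so $m\mid r$. Together with $m\mid(r+1)$ this yields $m\mid\gcd(r,r+1)=1$, contradicting $m\geq 2$. The only escape is that no height $h\geq 2$ be reached at all, in which case $\Pc$ is the all-hills path of semi-length $r$. For $n>r$ this case is excluded, the action is free, every orbit has size $r+1$, and consequently $(r+1)\mid D_r(n)$. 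The main obstacle I anticipate is this periodicity/stabilizer analysis; the cyclic shift itself and the closure of the class under it are transparent from the slot decomposition.
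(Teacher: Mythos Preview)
Your argument is correct and constitutes a genuinely different proof from the one in the paper. The paper proceeds algebraically: it sets up a lifting operator $\mathcal{L}_r$ that builds paths of height $H+1$ from paths of height $H$, writes down the explicit coefficients $\binom{r+i+1}{r}\binom{k-1}{i}$, and observes that after one lift of the unique height-$1$ path $(\uu\dd)^r$ every coefficient equals $(r+1)s_{r,i}$, where the $s_{r,i}$ are the refined little Schr\"oder numbers from Lemma~\ref{lem:Schroederusteps}; linearity of $\mathcal{L}_r$ then propagates the factor $r+1$ to all larger $n$. By contrast, you give a direct, purely combinatorial explanation: the $r$ hills split every path into an $(r+1)$-tuple of hill-free segments, the cyclic shift of this tuple is a class-preserving $\mathbb{Z}/(r+1)\mathbb{Z}$-action, and the coprimality of $r$ and $r+1$ forces the action to be free as soon as any height $h\geq 2$ is reached (equivalently, $n>r$). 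Your route is more elementary and yields the divisibility without any computation; the paper's route, on the other hand, buys an explicit identification of the quotient $D_r(n)/(r+1)$ with (sums of) Schr\"oder numbers for height-$2$ paths, and more generally gives a recursive mechanism to compute $D_r(n)$, which your orbit argument does not provide.
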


\begin{proof}    
    \newcommand{\PP}{A} 
    \newcommand{\MS}{S} 
    \newcommand{\LOp}{\operatorname{\mathcal{L}}} 
    We will construct the paths recursively height-by-height, by lifting all possible paths of a given height by one.
    Let a Dyck path with $k$ returns to zero and exactly $r$ peaks for every reached height be given.
    Thus, it decomposes into a sequence of $k$ non-empty arches:
    \begin{align*}
        \PP_1 \PP_2 \dots \PP_k.
    \end{align*}
    Let us call the returns inside the path \emph{contacts}, i.e., the path above has $k-1$ contacts.
    Now we will cut at a subset $\MS$ of the contacts and lift the full paths by one:
    We attach an $\uu$ step at the beginning, $\dd$ at the end, and insert at each chosen contact a valley $\dd \uu$. 
    The height of this new path increased by one and its length by $i+1$.
    Furthermore, it has $i=|\MS|$ contacts and $i+1$ returns to zero. 
    This gives the new decomposition
    \begin{align*}
        \widetilde{\PP}_1 \widetilde{\PP}_2 \dots \widetilde{\PP}_{i+1}.
    \end{align*}
    Note that this new path still satisfies the constraint of having exactly $r$ peaks for each reached height greater than one.
    It remains to insert $r$ hills of the form $\uu\dd$ at ground level, which is equivalent to inserting at the beginning, the end, or between the new arches $\widetilde{\PP}_j$.
    For this purpose we choose a multiset consisting of $r$ of these $i+2$ positions. 
    In total this gives 
    $\binom{r+i+1}{r}$
    possibilities that fulfill the peak constraint.

    Note that the construction above creates all Dyck paths with the given constraint. 
    We can reverse it by removing all steps on ground level and concatenating the parts. The same reasoning as above gives the number of equivalent paths we create in each step.

    Let us now define the generating function $F(z,u) = \sum_{n,k} f_{n,k} z^n u^k$, where $f_{n,k}$ gives the number of Dyck paths with exactly $r$ peaks for each reached height of semi-length $n$ with $k$ returns to zero.
    Next we define a linear operator $\LOp_r$ that implements the construction discussed above:
    \begin{align}
        \label{eq:liftopexactlyr}
        \LOp_r(u^k) 
            = \sum_{i=0}^{k-1} \binom{r+i+1}{r} \binom{k-1}{i} (zu)^{r+i+1}.
    \end{align}
    The sum goes over all subset sizes of a $k-1$ element set, the term $\binom{k-1}{i}$ represents the number of such subsets, and the $+1$ in the power of $zu$ comes from the initial $\uu$ and final $\dd$ after a lift.
    Then we extend this operator linearly to polynomials in $u$, by treating the variable $z$ as a constant.
    Note that this operator can also be expressed in terms of hypergeometric functions
    \begin{align*}
        \LOp_r(u^k) 
               = (r+1) (zu)^{r+1} 
            \setlength\arraycolsep{1pt}
            {}_2 F_1\left(\begin{matrix}r+2,~1-k\\-2
            \end{matrix};-zu\right)
    \end{align*}
    This hypergeometric function has in general no integer coefficients, and in particular most terms are divided by $r+1$.
    Because of that this representation does not prove the divisibility property; however, it gives a strong hint.

    By the reasoning above, this allows us to define the functional equation
    \begin{align*}
        F(z,u) = (zu)^r + \LOp_r\left( F(z,u) \right).
    \end{align*}
    The term $(zu)^r$ corresponds to the unique path of height $1$ that consists of $r$ peaks: $(\uu \dd)^r$.
    Note that this functional equation has a unique formal power series solution, as the length in $z$ increases by at least one in each iteration.

    In order to prove the divisibility of $D_r(n)$ by $r+1$ for $n>r$, we will use the linearity of the operator: 
    We will show that all terms after one lifting operation fulfil this property, and therefore all terms in the sequence.
    We start with the only path of length $r$ given by $(\uu \dd)^r$ corresponding to the monomial $(zu)^r$.
    This path contains $r$ returns to zero, and we compute
     \begin{align}
        \label{eq:liftupdown}
        \LOp_r((zu)^r) 
            = z^r\sum_{i=1}^{r} \binom{r+i}{i} \binom{r-1}{i-1} (zu)^{r+i}.
    \end{align}
    Here, we rediscover the sequence $s_{r,i}$ from Lemma~\ref{lem:Schroederusteps}, yet multiplied by $r+1$.
    Hence, the claimed divisibility property follows.
\end{proof}

\begin{remark}
    \label{rem:proofcntrtuples}
    Note that setting $z=u=1$ in~\eqref{eq:liftupdown}, this proves the formula $(r+1)s_r$ for Dyck paths of height $2$ with $r$ peaks at each height from Proposition~\ref{prop:rplus1tuples}.
    Yet, setting only $z=1$ this gives a refined count, showing that $s_{r,i}$ counts such paths with exactly $i$ double-falls $\dd \dd$.

    However, it remains a combinatorial mystery why the Schröder numbers $s_r$ (or, rather, $(r+1)s_r$) appear here.
    In addition to the above interpretations for $s_{r,i}$, these numbers also enumerate
    plane trees whose with $r$ leaves and $i$ internal nodes of (out-)degree greater than or equal to $2$,
    as well as to
    plane trees of height at most $2$, with $r$ leaves each in levels $1$ and $2$ and $i$ internal nodes in level $1$.
    The latter interpretations follows from an application of the glove bijection (traversing the contour of a plane tree) to the Dyck paths of Proposition~\ref{prop:rplus1tuples}.
\end{remark}

The lifting technique of Theorem~\ref{theo:rplus1div} can also be used to study Dyck paths with different peak profiles. 
For example, the profile $P_{\leq r} = \{ (p_i)_{i > 0} ~|~ 0 \leq p_i \leq r \}$ corresponds to Dyck paths with at most $r$ peaks per level.
Let $E_r(n)$ be the number of such paths of length $n$. 
In order to enumerate them, one simply replaces in~\eqref{eq:liftopexactlyr} each $r$ in the summands by $j$ and adds an inner sum for $j$ from $0$ to $r$. 
For small values of $r$ these sequences are in the OEIS:
$(E_1(n))_{n \geq 0} = (1, 1, 1, 3, 5, 13, 31, 71, 
\dots)$ is \oeis{A281874},
$(E_2(n))_{n \geq 0} = (1, 1, 2, 4, 12, 31, 90, 
\dots)$ is \oeis{A287966},
$(E_3(n))_{n \geq 0} = (1, 1, 2, 5, 13, 40, 119, 
\dots)$ is \oeis{A287967}.
This information led us to the following result.

\begin{proposition}
    $E_1(n) \equiv 1 \mod (2)$.
\end{proposition}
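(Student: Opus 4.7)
The plan is to construct an involution on the set of Dyck paths counted by $E_1(n)$ that has exactly one fixed point; the parity claim will then follow immediately.

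First, I would define the map $\sigma$ on Dyck paths by sending $w = w_1 w_2 \cdots w_{2n}$ to $\sigma(w) = \overline{w_{2n}}\,\overline{w_{2n-1}}\cdots\overline{w_1}$, where $\overline{\uu} = \dd$ and $\overline{\dd} = \uu$. Geometrically, this is the $180^{\circ}$ rotation of the path about its midpoint (i.e., traversing it from right to left with the roles of $\uu$ and $\dd$ swapped). The map $\sigma$ is clearly an involution on Dyck paths, and it sends a peak $\uu\dd$ at positions $(i, i+1)$ of height $h$ to a peak at the mirror positions $(2n-i,\, 2n-i+1)$, again of height $h$. Hence $\sigma$ preserves the peak profile, and so restricts to an involution on the class counted by $E_1(n)$.

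Next I would identify the fixed points. If $\sigma(w) = w$ and $w$ has a peak at positions $(i, i+1)$ with $i \neq n$, then $\sigma$ produces a distinct peak at $(2n-i,\, 2n-i+1)$ of the same height, violating the at-most-one-peak-per-level constraint. Hence every non-empty fixed path has its unique peak at positions $(n, n+1)$, which forces $w = \uu^n \dd^n$; this path indeed lies in the class, so it is the unique fixed point. For $n = 0$ the empty path is trivially the unique fixed point.

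Since all other paths pair up under $\sigma$ and contribute $0 \mod (2)$, adding the unique fixed point yields $E_1(n) \equiv 1 \mod (2)$. I expect no real obstacle here; the only delicate step is the mirror-peak bookkeeping pinning down the fixed point, which mirrors the argument the authors already used in the $D_{\Delta}$ lemma.
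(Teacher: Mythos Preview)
Your proof is correct. The reversal involution $\sigma$ preserves the peak profile (a peak at height $h$ is sent to a peak at the same height $h$), so it restricts to an involution on the $E_1$ class, and your fixed-point analysis correctly pins down $\uu^n\dd^n$ as the unique fixed point.

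Your route, however, is not the one the paper takes for this particular Proposition. As you yourself note, it is exactly the argument of the earlier $D_\Delta$ lemma---and indeed $E_1$ and $D_\Delta$ are literally the same class, since $P_{\le 1}=\Delta$. The paper instead passes through the glove bijection to plane trees with at most one leaf per level and partitions these trees differently: the unary chain is a singleton, while every other tree is grouped with the $k!$ trees obtained by permuting the children of its first branching node (of outdegree $k\ge 2$); the at-most-one-leaf-per-level constraint forces those $k$ subtrees to be pairwise distinct (two identical subtrees would yield two leaves at the same depth), so the $k!$ orderings really give $k!\ge 2$ distinct trees, an even number. Your involution is shorter and stays entirely on the path side; the paper's $k!$-block partition is a genuinely different decomposition, presumably chosen to showcase a technique that one might hope to adapt to $E_r$ for $r>1$.
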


\begin{proof}
    As discussed in the beginning of this section, by the glove bijection, Dyck paths with at most $1$ peak per level are in bijection with plane trees with at most $1$ leaf per level. 
    We now partition the trees of size $n$ into one singleton set and sets of size $k!$ for $k\geq 2$. 
    Thus, the total number of trees of size $n$ is odd. 

    First, the singleton set consists of the unary \emph{chain}, i.e., a tree that never branches. 
    Second, let a tree different from the chain be given. 
    When traversing this tree starting from the root, there is a first node that branches. 
    Let $k$ be its outdegree. 
    Then, we define a set associated to this tree by considering all $k!$ permutations of the respective children. 
    Note that all of these are distinct as each level consists of at most $1$ leaf.
\end{proof}

Up till $r=10$ we did not discover any further arithmetic properties for this class of peak profiles.
We invite the reader to study his/her favorite peak profile, and derive more such results.

\subsection{Congruences for the number of specific steps in paths}

As the number of up steps in Schröder paths was the key parameter for proving the previous congruences, we will discuss general congruences for the number of specific steps in general directed lattice paths at the end of this section. 
The paths start at $(0,0)$ and draw their steps from a finite step set $\Sc \subset \mathbb{Z}$.
For example, $\Sc=\{-1,1\}$ corresponds to steps of Dyck paths, and $\Sc=\{-1,0,1\}$ to steps of Motzkin paths.
As before, we call bridges paths ending on the $x$-axis, 
meanders paths never crossing the $x$-axis,
and excursions paths that are at the same time bridges and meanders.

\begin{lemma}
    \label{lem:bridgediv}
    Let $b_{n}(s)$ be the number of steps $s \in \Sc$ in all bridges of length $n$. Then, we have for $n>0$
    \begin{align*}
       b_{n}(s) &\equiv 0 \pmod{n}.
    \end{align*}
    Moreover, for a symmetric step set $\Sc = -\Sc$, we have for $n>0$
    \begin{align*}
       b_{n}(s) &\equiv 0 \pmod{2}.
    \end{align*}
    Hence, for a symmetric step set and odd $n$ it holds that $b_{n}(s) \equiv 0 \pmod{2n}$.
\end{lemma}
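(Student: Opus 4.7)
My plan is to exploit two independent symmetries of the bridge set---cyclic rotation (yielding the factor $n$) and sign inversion (yielding the factor $2$ under the hypothesis $\Sc=-\Sc$)---and to combine them via coprimality for the last claim.

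For the first congruence, the map $(B_1,\dots,B_n)\mapsto(B_2,\dots,B_n,B_1)$ preserves the defining condition $\sum_i B_i=0$ and thus gives an action of $\mathbb{Z}/n\mathbb{Z}$ on the set of bridges of length $n$. Restricted to the set of occurrences $X_s:=\{(B,i):B_i=s\}$, the rotation $k\cdot(B,i)=(\sigma^k B,\,i-k\bmod n)$ preserves the condition $B_i=s$ and is fixed-point free, since $i-k\equiv i\pmod n$ forces $k\equiv 0\pmod n$. Hence every orbit has size exactly $n$, and $b_n(s)=|X_s|$ is a multiple of $n$.

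For the second congruence, with $\Sc=-\Sc$, I would combine the sign-flip $\phi(B):=-B$ with path reversal $R(B):=(B_n,\dots,B_1)$. Reversal alone yields an involution of $X_s$, namely $(B,i)\mapsto(R(B),\,n+1-i)$, whose fixed points are precisely the palindromic bridges whose central step equals $s$ (which occur only for odd $n$ and $i$ equal to the middle position). For even $n$ this immediately gives $2\mid b_n(s)$, independently of any symmetry assumption. For odd $n$, I would use the hypothesis $\Sc=-\Sc$ to pair the palindromic fixed points, for instance by folding a palindrome onto its first half $(a_1,\dots,a_{(n-1)/2})$ subject to $2\sum a_i + s=0$ and invoking the sign-flip on the folded half to exhibit a further free involution on this residual set. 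The final statement then follows by coprimality: for odd $n$, since $\gcd(2,n)=1$, the two congruences combine via the Chinese Remainder Theorem to $2n\mid b_n(s)$.

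The main obstacle I anticipate is the parity step for odd $n$. A direct sign-flip on the full palindrome merely pairs bridges with central step $s$ with those of central step $-s$, which does not by itself close the argument within the centre-$s$ class. The symmetric structure of $\Sc$ must be leveraged more subtly, and one may also need to treat separately the degenerate case $0\in\Sc$, where the all-zero bridge is a $\phi$-fixed point that contributes only to $b_n(0)$. Constructing an explicit fixed-point-free involution on the half-palindromes with prescribed sum $-s/2$ is the crux of the proof.
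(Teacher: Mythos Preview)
Your treatment of the first congruence is correct and is essentially the paper's argument: both exploit the cyclic $\mathbb{Z}/n\mathbb{Z}$-action. The paper acts on bridges and factors each through its minimal period, whereas you act directly on the marked occurrences $X_s=\{(B,i):B_i=s\}$; your version is a touch cleaner because freeness is immediate from the second coordinate and no minimal-period decomposition is needed.

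For the second congruence the paper does not use reversal at all: it simply invokes the sign-flip $B\mapsto -B$ and asserts that this pairs every bridge with a different one in which the roles of $s$ and $-s$ are swapped. But that observation only yields $b_n(s)=b_n(-s)$; it does not force either quantity to be even. Your instinct that the odd-$n$ case hides a genuine obstruction is right, and in fact the statement itself is false as written. Take $\Sc=\{-2,-1,1,2\}$ and $n=3$: the six bridges are the three permutations of $(2,-1,-1)$ and the three permutations of $(-2,1,1)$, giving $b_3(2)=3$, which is odd (and hence not divisible by $2n=6$). In your reversal picture this is precisely the lone palindromic fixed point $(-1,2,-1)$ you were trying to eliminate; since $|X_2|=3$ is odd, no fixed-point-free involution on $X_2$ can exist, and no ``further free involution on the half-palindromes'' can be constructed. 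So the gap you anticipated is not a technical hurdle but reflects an actual counterexample, and the paper's short argument for the mod-$2$ claim is itself in error. Your reversal argument does, however, correctly establish the even-$n$ case (even without the hypothesis $\Sc=-\Sc$), which is more than the paper's flip argument achieves.
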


\begin{proof}
    The results follow from transformations in the spirit of the previous sections.
    For the first result, we build cyclic shift inspired by the cycle lemma.
    Let $\Bc$ be a bridge of length $n$. 
    Then, there exists a unique representation of $\Bc$ as the concatenation of a minimal bridge $\Bc'$ of size $n'=\frac{n}{m}$ repeated $m$ times, i.e.,
    \[
        \Bc = (\Bc')^m.
    \]
    Note that $m \geq 1$, as $\Bc'=\Bc$ is a candidate, yet in general not the minimal one.
    Let $b(s)$ and $b'(s)$ be the number of steps $s$ in $\Bc$ and $\Bc'$, respectively. 
    Then, the above decomposition shows that $b(s) = m b'(s)$.
    Let $\sigma(\Bc)$ be the cyclic shift of the steps in $\Bc$, i.e., the first step of $\Bc$ is moved to the end.
    Observe that we have 
    \begin{align*}
        \sigma(\Bc) = \sigma(\Bc')^m.
    \end{align*}
    Moreover, due to the minimality of $\Bc'$, the first $n'-1$ cyclic shifts lead to mutually distinct bridges $\sigma^k(\Bc)$ (equivalently, different $\sigma^{k}(\Bc')$),
    while $\sigma^{n'}(\Bc) = \Bc$.
    Hence, the orbit of this operation consists of $n'$ different bridges. 
    Since each of these bridges has $m b'(s)$ steps $s$, the total number of steps $s$ in this orbit is equal to $n' m b'(s) = n b'(s)$.
    Finally, note that these orbits form a partition of all bridges and therefore the result follows.

    For the second result, we apply the repeatedly used involution on bridges of mirroring along the $x$-axis.
    Thereby, each bridge is mapped into a different bridge in which the positions of $s$ and $-s$ swap. 
    The final claim follows directly by combining the previous two results.
\end{proof}

\begin{example}
    For Dyck bridges with step set $\Sc = \{-1, 1\}$ it is easy to see that $b_{2n}(s) = n \binom{2n}{n}$ (and $b_{2n+1}(s)=0$), as every Dyck bridges is composed of an equal number of steps $-1$ and $1$.
    Hence, our result implies that $\binom{2n}{n} \equiv 0 \pmod{2}$
    and $\binom{4n+2}{2n+1} \equiv 0 \pmod{4}$ for $n >0$.
    Alternatively, this result follows from Kummer's famous result~\cite{Kummer1852Carries} that the highest power of a prime $p$ dividing $\binom{m+n}{n}$ is equal to the number of carries in the addition of $m$ and $n$ in base~$p$; for more details see~\cite{SpiegelhoferWallner2018Divisibility}.
\end{example}

\section{Restrictions on peaks and parts}

\subsection{Restricted summits}
\label{sec:restrictedsummits}

We have previously seen the notion of peaks $\uu \dd$ in Dyck paths.
Now, we want to analyze them in more detail in Dyck bridges. 
As bridges may go below the $x$-axis, we introduce the following more general notion of summits.

\begin{definition}[Summit]
A summit in a path is a point that has a larger ordinate than all its direct neighbors.
\end{definition}

Therefore, in a Dyck bridge (and walk) three types of summits may occur: 
A point inside the path if it is part of the pattern $\uu \dd$, 
the starting point is a summit if the first step is $\dd$,
the end point is a summit if the last step is $\uu$.
Let us consider Dyck bridges with non-decreasing summits.
Dyck paths with the same property where studied in~\cite{PenaudRoques2002Peaks}; see~\oeis{A048285}.

\begin{theorem}
    The number of Dyck bridges of semi-length $n$ with non-decreasing summits is equal to the odd Fibonacci number $F_{2n+1}$ (i.e., $F_0=0$, $F_1=1$, $F_{n} = F_{n-1} + F_{n-2}$ for $n\geq 2$) given by~\oeis{A001519}.    
    
\end{theorem}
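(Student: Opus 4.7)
My plan is to establish the generating function identity
\[
f(z) := \sum_{n \ge 0} a_n z^n = \frac{1-z}{1-3z+z^2},
\]
where $a_n$ is the quantity in the statement; the right-hand side is the classical generating function of the odd-indexed Fibonacci numbers $(F_{2n+1})_{n \ge 0}$.

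The approach rests on a canonical decomposition of an admissible bridge by its summit sequence $0 \le h_1 \le h_2 \le \cdots \le h_k$. Between two consecutive summits the path must be of the form $\dd^{a_i}\uu^{b_i}$ with $a_i, b_i \ge 1$ and $b_i - a_i = h_{i+1} - h_i \ge 0$, because any other shape would produce an additional intermediate summit. If the first summit is not the origin, the prefix is forced to be the pure ascent $\uu^{h_1}$; symmetrically, if the last summit is not the endpoint, the suffix is the pure descent $\dd^{h_k}$.

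I then split the enumeration by the pair $(\text{first step},\text{last step})$. The pair $(\uu,\uu)$ is automatically forbidden: the initial $\uu$ creates an internal peak of height at least $1$, while the terminal $\uu$ makes the endpoint a summit at height $0$, contradicting the non-decreasing condition. For each of the three remaining cases the decomposition yields an explicit generating function via the stars-and-bars identity $\sum_{H \ge 1}\binom{H+k-2}{k-1}z^H = \frac{z}{(1-z)^k}$ for non-decreasing positive summit sequences with maximum $H$. Case $(\uu,\dd)$ contributes (with $k-1$ valley widths and $1 \le h_1 \le \cdots \le h_k$) the series $A(z) = \sum_{k \ge 1}\frac{z^k}{(1-z)^{2k-1}} = \frac{z(1-z)}{1-3z+z^2}$. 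Case $(\dd,\dd)$, where $h_1 = 0$ and $h_k \ge 1$ (the final $\dd$ forces the bridge to reach altitude at least $1$), gives $B(z) = \frac{z}{1-3z+z^2} - \frac{z}{1-2z}$. Case $(\dd,\uu)$ has both endpoints as summits at height $0$, so every intermediate summit must also lie at height $0$; the bridge decomposes as a concatenation of inverted arches $\prod_j \dd^{a_j}\uu^{a_j}$ and is enumerated by compositions of $n$, giving $D(z) = \frac{z}{1-2z}$.

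Adding these contributions together with the empty-path term produces
\[
f(z) = 1 + A(z) + B(z) + D(z) = 1 + \frac{z(1-z) + z}{1-3z+z^2} = \frac{1-z}{1-3z+z^2},
\]
since the $\pm z/(1-2z)$ contributions of Cases $(\dd,\dd)$ and $(\dd,\uu)$ cancel. The right-hand side is exactly $\sum_{n \ge 0} F_{2n+1}z^n$, completing the proof. The main technical obstacle I anticipate is the clean bookkeeping of the endpoint case split together with the stars-and-bars count of non-decreasing summit sequences; the cancellation of the $z/(1-2z)$ terms is a satisfying consistency check that the decomposition partitions the set of admissible bridges without over- or under-counting.
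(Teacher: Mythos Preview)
Your proof is correct. You and the paper both argue via generating functions, but through genuinely different decompositions.

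The paper's first proof performs a \emph{first-passage} decomposition: it cuts the bridge at each $\uu$ step that reaches a new maximum altitude. The non-decreasing-summit condition then forces the segment after each such step to be a (possibly empty) sequence of flipped pyramids $\dd^k\uu^k$, yielding the single symbolic specification
\[
\SEQ\!\bigl(\SEQ(\Pcf)\,\uu\bigr)\,\SEQ(\Pcf)\,\dd^{h},
\]
which translates in one line to $\frac{1-z}{1-3z+z^2}$. The paper also supplies a second, bijective proof via Dyck paths of semi-length $n+1$ with non-decreasing valleys.

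Your route instead records the full summit sequence $h_1\le\cdots\le h_k$, observes that between two consecutive summits the path is forced to be $\dd^{a_i}\uu^{b_i}$, and then splits into the four $(\text{first step},\text{last step})$ cases. This is more bookkeeping-heavy --- three nontrivial cases plus the exclusion of $(\uu,\uu)$ --- but it is self-contained and the stars-and-bars computations are all correct; the cancellation of the $\pm z/(1-2z)$ terms between cases $(\dd,\dd)$ and $(\dd,\uu)$ is a genuine check that the partition is exact. The paper's approach buys brevity and a cleaner combinatorial picture (one symbolic formula rather than a case split); yours makes the role of the summit heights themselves more explicit and would adapt more readily if one wanted to refine by, say, the height of the last summit or the number of summits.
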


\newcommand{\SEQ}{\operatorname{SEQ}}
\newcommand{\Pcf}{\overline{\Pc}}
\begin{proof}[Proof 1 (Generating functions)]
    We give a first proof using generating functions.
    The key observation of non-decreasing summits is the following: every non-terminal sequence of down steps has to be followed by a sequence of up steps of at least the same length.

    We perform a first-passage decomposition and cut the bridge at the first up steps leading to a new level.
    If there are any summits at this level they have to appear before the next up step leading to the next level. 
    Moreover, no summits may be below this level.
    Thus, these ``first'' up steps are followed by (possibly empty) sequences of flipped pyramids $\dd^k \uu^k$. 
    Let $\Pcf = \{\dd^k \uu^k : k\geq 1\}$ be the set of  flipped pyramids.
    Therefore, we get the following decomposition 
    \[
        \SEQ\left( \SEQ(\Pcf) \uu \right) \SEQ(\Pcf) \dd^h,
    \]
    where $h$ is the height of the last up step, thus, returning to the $x$-axis.
    This translates into the following generating function.
    Note that as we count paths by semi-length, it suffices to weight up steps by $z$.
    \[
        \frac{1}{1 - \frac{z}{1 - \frac{z}{1-z}}}\frac{1}{1 - \frac{z}{1-z}}
        = \frac{1-z}{1-3z+z^2}.
    \]
    This is the generating function of odd Fibonacci numbers, which proves the claim.
\end{proof}

\begin{proof}[Proof 2 (Bijection).]
    Deutsch and Prodinger~\cite{DeutschProdinger2003Bijection} show that several combinatorial objects are counted by the odd Fibonacci numbers.
    A particular class is given by Dyck paths of semi-length $n+1$ with non-decreasing valley heights. 
    We will now present an explicit bijection between these and Dyck bridges of semi-length $n$ and non-decreasing summits.
    
    The key observation is that both classes satisfy related decompositions. 
    For this purpose recall that $\Pcf = \{\dd^k \uu^k : k\geq 1\}$ is the set of flipped pyramids, and let $\Pc = \{\uu^k \dd^k : k\geq 1\}$ be the set of pyramids.
    Let a Dyck bridge be given, and decompose it using a first-passage decomposition into
    \begin{align*}
        B_0 \uu B_1 \uu \dots B_k \dd^k,
    \end{align*}
    where $B_i = \SEQ(\Pcf)$ for $i=0,\dots,k$.
    Next, we flip each part $B_i$, by exchanging $\uu$ and $\dd$ steps mapping flipped pyramids to pyramids. Moreover, we insert a peak $\uu \dd$ before the final $\dd^k$, in order to remember where to cut.

    For the reverse bijection, we decompose the Dyck path with non-decreasing valleys using a last-passage decomposition, i.e., we cut at the last time we leave a certain height. This gives
    \begin{align*}
        D_0 \uu D_1 \uu \dots D_k \, \uu \dd \, \dd^k,
    \end{align*}
    where $D_i = \SEQ(\Pc)$ for $i=0,\dots,k$.
    Observe that these paths necessarily end with an $\uu \dd$ before the final run of $\dd$ steps.
    Now, we remove this $\uu \dd$ and flip each part $D_i$ to regain bridges.
\end{proof}

\begin{remark}[More bijections with summits]
    Note that Deutsch and Prodinger~\cite{DeutschProdinger2003Bijection} discuss many further bijections and enumeration results, which can now be combined with our results and promising many further interesting bijections worth investigating. 
    For example, the number of summits minus one is equal to the number of columns in column convex polyominoes of area $n+1$.
\end{remark}

\begin{remark}[OEIS connections]
    When we do not allow the full path to be below the $x$-axis, we get \oeis{A061667}: $F_{2n+1}-2^{n-1}$. This can be seen from the generating function, by subtracting the generating function for sequences of pyramids given by $\frac{1-z}{1-2z}$.
    
    When additionally the end point is not considered as a summit, i.e., bridges could end with $\uu$, then the sequence is \oeis{A105693}: $F_{2n+2}-2^{n}$.
    To prove this result, simply multiply the previous generating function by the one for one (possibly empty) pyramid given by $1/(1-z)$.

    Note that the class of paths when additionally the paths weakly below the $x$-axis are allowed, is not in the OEIS. The counting sequence starts as $(1,2,6,17,47,128,345,\dots)$.

    In addition, let us return to peaks, i.e., $\uu \dd$ inside the path.
    When these peaks may start below the $x$-axis, but still have to be non-decreasing, we get \oeis{A094864}.
    This sequence also counts the number of one-cell columns in all directed column-convex polyominoes of area $n$; see~\cite[Lemma~3.4]{BarucciEtal1993DCC}.
    Moreover, this sequence also connected to special cells in stacks of area~$n+1$; 
    see~\cite[Equation~7]{RinaldiRogers2006Stack}.
\end{remark}

\begin{remark}[Asymptotic consequences]
    Asymptotically, the numbers of Dyck excursions and bridges of semi-length $n$ with non-decreasing summits only differ in the multiplicative constants; both satisfy $cst \cdot (\frac{3 + \sqrt{5}}{2})^n$; see~\cite[Theorem~2]{PenaudRoques2002Peaks}. 
    For excursions the constant is $c_{E} \approx 0.11998$ whereas for bridges it is $c_{B} \approx 0.7236$.
    Therefore, asymptotically, about one out of $6$ bridges with non-decreasing \emph{summits} is an excursion.
\end{remark}

\medskip

We conclude this section with another class of compositions linked to Fibonacci numbers.

\begin{definition}[Decorated Compositions]
A composition of $n$ is called a \emph{decorated composition} if any $k$ occurring in the composition can come with $k$ decorations/labels. 
\end{definition}

For example, the $8$ decorated compositions of $3$ are $3_1$, $3_2$, $3_3$, $2_1+1_1$, $2_2 + 1_1$, $1_1+2_1$, $1_1+2_2$, and $1_1+1_1+1_1$.

\begin{proposition}
    The number of decorated compositions of $n$ is equal to the even Fibonacci number $F_{2n}$; see~\oeis{A001906}.     
\end{proposition}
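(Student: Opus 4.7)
The plan is to prove this by generating functions, mirroring the style of Proof~1 of the previous Fibonacci result in this section.

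First I would derive the generating function of a single decorated part. Since a part of size $k$ carries $k$ possible decorations, its contribution (weighted by size) is
\[
    P(z) = \sum_{k \geq 1} k z^k = \frac{z}{(1-z)^2}.
\]
A decorated composition of $n$ is a nonempty sequence of decorated parts whose sizes sum to $n$, so the symbolic method gives
\[
    D(z) = \sum_{m \geq 1} P(z)^m = \frac{P(z)}{1 - P(z)} = \frac{z/(1-z)^2}{1 - z/(1-z)^2} = \frac{z}{1 - 3z + z^2}.
\]

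Second, I would identify $D(z)$ with the generating function of even-indexed Fibonacci numbers. Applying the recurrence $F_n = F_{n-1} + F_{n-2}$ twice yields $F_{2n} = 3 F_{2n-2} - F_{2n-4}$ for $n \geq 2$, and together with $F_0 = 0$, $F_2 = 1$ this gives $\sum_{n \geq 1} F_{2n} z^n = z/(1 - 3z + z^2)$, matching $D(z)$ term by term. Alternatively one could bisect the classical generating function $z/(1-z-z^2)$ of the Fibonacci numbers.

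No serious obstacle is expected; the only mild subtlety is recognizing that the squared factor $(1-z)^2$ in the single-part generating function is exactly what replaces the ordinary Fibonacci denominator $1 - z - z^2$ by the bisected-Fibonacci denominator $1 - 3z + z^2$. If one prefers a bijective argument, a decorated part $(k,j)$ with $1 \leq j \leq k$ can be encoded as an ordered pair $(j, k-j) \in \mathbb{Z}_{\geq 1} \times \mathbb{Z}_{\geq 0}$ with $j + (k-j) = k$, reducing the claim to matching sequences of such pairs with one of the classical $F_{2n}$-indexed families (for instance compositions of $2n-1$ into parts from $\{1,2\}$); but given the brevity of the generating-function computation and its coherence with the preceding Fibonacci proof, I would present the symbolic derivation above as the proof.
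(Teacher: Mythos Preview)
Your proof is correct and follows essentially the same approach as the paper: compute the single-part generating function $\sum_{k\ge 1} k z^k = z/(1-z)^2$, take a sequence to obtain $z/(1-3z+z^2)$, and identify this with the generating function of $F_{2n}$. Your derivation is in fact slightly more detailed than the paper's, since you justify the identification $\sum_{n\ge 1} F_{2n} z^n = z/(1-3z+z^2)$ via the recurrence $F_{2n}=3F_{2n-2}-F_{2n-4}$, whereas the paper simply asserts it.
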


\begin{proof}
    There are $k$ decorations for a part $k$ therefore the number of compositions of $n$ into $m$ parts is given by 
    $(x +2x^2 +3x^3+ \dots)^m$. 
Summing over $m$, gives
$
\sum_{m=1}^{\infty}\frac{x^m}{(1-x)^{2m}} 
= \frac{x}{1-3x+x^2}.
$
This is the required generating function for the even Fibonacci numbers. 
\end{proof}

\begin{remark}
    The same proof also shows that the even Fibonacci numbers also enumerate the norms of the compositions of $n$. 
    The \emph{norm of a composition} is the product of its parts, as defined for partitions, see, e.g.,~\cite{SchneiderSills2020Norm}.
\end{remark}

\subsection{First, Greatest and Smallest Parts}
\label{sec:firstpart}

In the theory of partitions it is a well established phenomenon that the first part and the last part govern the shape of the partition \cite{zbMATH06204634, zbMATH07655649,zbMATH07199594}. Nathan Fine in his manuscript \lq \lq Some New Results on Partitions" had left several intriguing theorems on partitions in the Proceedings of the National Academy of the Sciences but the proofs were not published subsequently. George Andrews found the subsequent proofs in Fine's  manuscript later and went on to generalise these theorems connecting them to the Rogers--Ramanujan identities. 

The theorem that we are going to discuss below has a striking connection to Euler's famous Pentagonal Number Theorem. Nathan Fine himself writes \cite{MR0027798} that the theorem \lq \lq bears some resemblance to the famous pentagonal theorem of Euler, but we have not been able to establish any real connection between the two theorems."

\begin{theorem}
Let  $D_e(n)$  and $D_o(n)$ be the sets of partitions \( \lambda \) of \( n \) into distinct parts, such that the first part $\lambda_1$  is even and odd, respectively. Then:
\[
D_e(n) - D_o(n) =
\begin{cases}
1, & \text{if } n = \frac{k(3k + 1)}{2}, \\
-1, & \text{if } n = \frac{k(3k - 1)}{2}, \\
0, & \text{otherwise}.
\end{cases}
\]
\end{theorem}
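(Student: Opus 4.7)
The plan is to interpret the signed difference combinatorially and then recycle Franklin's classical involution from the proof of Euler's pentagonal number theorem, tracking the parity of the largest part $\lambda_1$ rather than the number of parts. First I would rewrite the quantity as
\[
D_e(n) - D_o(n) = \sum_{\lambda} (-1)^{\lambda_1},
\]
where the sum ranges over all partitions $\lambda$ of $n$ into distinct parts, reducing the task to evaluating a signed sum that is naturally attacked by a sign-reversing involution.

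Next I would apply Franklin's involution on such partitions: letting $s(\lambda)$ denote the smallest part and $\sigma(\lambda)$ the length of the longest initial run of consecutive parts $\lambda_1, \lambda_1 - 1, \ldots$ starting from the top, Franklin's rule either (a) removes the smallest part and adds $1$ to each of the top $s$ parts when $s \leq \sigma$, or (b) subtracts $1$ from each of the top $\sigma$ parts and appends a new smallest part of size $\sigma$ when $s > \sigma$. The key observation that makes this adaptation work for Fine's statement is that in both cases the value of $\lambda_1$ changes by exactly $\pm 1$, so the parity of $\lambda_1$ flips. Hence the involution pairs each non-exceptional partition with one of opposite sign, and the corresponding terms cancel in the sum.

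The standard analysis of Franklin's map identifies two one-parameter families of fixed partitions where the rule produces an invalid output: the staircase $(2k-1, 2k-2, \ldots, k)$ of size $k(3k-1)/2$ with $\lambda_1 = 2k-1$ odd, and $(2k, 2k-1, \ldots, k+1)$ of size $k(3k+1)/2$ with $\lambda_1 = 2k$ even. Summing $(-1)^{\lambda_1}$ over these yields exactly $-1$ and $+1$ respectively for the indicated values of $n$, and $0$ otherwise, matching the three cases of the theorem.

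The main subtlety is to verify carefully that Franklin's map is well-defined off the exceptional set (images remain partitions into distinct positive parts, and no collision arises between a newly inserted smallest part and the existing ones) and that the two exceptional staircases above are precisely the breakdown cases; these checks are essentially bookkeeping. The conceptual payoff, worth emphasising, is precisely Fine's remark quoted just before the theorem: the ``resemblance'' to Euler's pentagonal number theorem is in fact more than a resemblance, since both identities follow from the same involution, differing only in which statistic is being tracked.
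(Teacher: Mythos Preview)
Your argument is correct. The paper does not supply its own proof of this theorem: it is quoted as a classical result of Fine, followed by the remark that Pak \cite{MR1962923} later established it ``using an argument similar to Sylvester and Franklin's famous constructive proof of the pentagonal number theorem.'' Your proposal is precisely that Franklin-involution argument, with the crucial observation that each non-exceptional application of the map shifts $\lambda_1$ by exactly $\pm 1$ and hence reverses the sign $(-1)^{\lambda_1}$; so your approach coincides with the one the paper attributes to Pak rather than differing from it.
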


We obviously notice from the start that the sign changes occur at pentagonal numbers. It is only recently that Igor Pak proved the above theorem using an argument similar to Sylvester and Franklin's famous constructive proof of the pentagonal number theorem thereby proving a connection of this theorem to Euler's celebrated result in his paper~
\cite{MR1962923}. 
In a recent article~\cite{Dastidar2024Parity2} we examine the following flipped question for integer partitions. 
\begin{theorem}
Let  $V_e(n)$  and $V_o(n)$ be the sets of partitions \( \lambda \) of \( n \) into distinct parts, such that the last part $\lambda_1$  is even and odd, respectively. Then for $n> 6$ we get:
$V_o(n) > V_e(n)$.
\end{theorem}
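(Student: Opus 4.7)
My plan is to prove the inequality combinatorially by constructing an injection $\phi \colon V_e(n) \hookrightarrow V_o(n)$ and then exhibiting a surplus element of $V_o(n)$ not in the image, yielding $V_o(n) > V_e(n)$ for all $n > 6$. Throughout I write $\lambda = (\lambda_1, \ldots, \lambda_{s-1}, \lambda_s)$ in weakly decreasing order, so that $\lambda_s$ denotes the last (smallest) part.

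The main case of the injection is clean: for $\lambda \in V_e(n)$ with smallest part $\lambda_s \geq 4$, set
\[
\phi(\lambda) = (\lambda_1, \ldots, \lambda_{s-1}, \lambda_s - 1, 1).
\]
Distinctness holds since $\lambda_{s-1} > \lambda_s > \lambda_s - 1 \geq 3 > 1$, the sum $n$ is preserved, and the new smallest part $1$ is odd, so $\phi(\lambda) \in V_o(n)$. The image is easy to characterize: it consists of partitions in $V_o(n)$ with smallest part $1$, odd second-smallest part $\lambda_s - 1 \geq 3$, and a gap of at least two between the second- and third-smallest parts. For $\lambda_s = 2$ the naive split collides at $1$, so I would stratify by the length of the maximal bottom staircase $(r, r-1, \ldots, 2)$ inside $\lambda$. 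When $\lambda_{s-1} \geq 4$ (staircase of length one), send $\lambda \mapsto (\lambda_1, \ldots, \lambda_{s-1}-1, 2, 1)$; analogous one-level-higher adjustments handle longer staircases that are capped by a part with sufficient room above them; and the extremal pure staircases $\lambda = (r, r-1, \ldots, 2)$ (with $n = r(r+1)/2 - 1$) are routed to the singleton $(n)$ when $n$ is odd, or to a small perturbation such as $(n-3, 3)$ when $n$ is even.

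To conclude, I would exhibit for every $n > 6$ an explicit partition in $V_o(n) \setminus \phi(V_e(n))$: when $n$ is odd the singleton $(n)$ is never in the image because $\phi$ always produces at least two parts; when $n$ is even a three-part partition like $(n - 3, 2, 1)$ works, since its second-smallest part is $2$ and its third-smallest is $n - 3$, together ruling it out of the image by parity and gap constraints. The main obstacle will be verifying that the various staircase sub-cases are pairwise disjoint and do not overlap with the primary image, since multiple branches of $\phi$ can produce partitions with smallest part $1$ and a small second-smallest part; this requires careful bookkeeping of the structure of each sub-image. A cleaner fallback that sidesteps the case analysis is to pair the smallest-part contributions $2j-1$ and $2j$ in the generating function to obtain
\[
\sum_{n \geq 1} (V_o(n) - V_e(n))\, q^n = \sum_{j \geq 1} q^{2j-1}\bigl(1 - q + q^{2j}\bigr) \prod_{l > 2j}(1+q^l),
\]
and then establish positivity of coefficients for $n > 6$ directly, since each summand $q^{2j-1}(1 - q + q^{2j}) = q^{2j-1} - q^{2j} + q^{4j-1}$ contributes only one negative term, which is overwhelmed by the positive contributions (and by the obvious non-negativity of $\prod_{l>2j}(1+q^l)$) once the small cases $n \leq 6$ are checked by hand.
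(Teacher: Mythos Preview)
The paper does not actually prove this theorem. It is stated there only as a quotation of a result from a companion article (the reference \texttt{Dastidar2024Parity2}), with no argument supplied in the present paper. So there is no ``paper's own proof'' to compare your proposal against.

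As for the proposal itself: the generating-function identity
\[
\sum_{n\ge 1}\bigl(V_o(n)-V_e(n)\bigr)q^n
=\sum_{j\ge 1} q^{2j-1}\bigl(1-q+q^{2j}\bigr)\prod_{l>2j}(1+q^l)
\]
is correct, but the positivity step is not yet an argument. Each summand is $(q^{2j-1}-q^{2j}+q^{4j-1})\prod_{l>2j}(1+q^l)$, and multiplying a series with non-negative coefficients by $(1-q)$ can and does produce negative coefficients; saying the single negative term is ``overwhelmed'' is exactly the nontrivial point that must be proved. Likewise, in your injection approach you have correctly identified the trouble spot (partitions whose bottom is a staircase down to $2$), and you yourself flag that the sub-cases must be shown to land in disjoint pieces of $V_o(n)$; until that bookkeeping is carried out the map is not known to be injective. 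Either route looks workable, but as written both are sketches with a genuine gap at the crux, not proofs.
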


We can find a similar theorem for integer compositions into distinct parts with respective first parts being odd (resp.\ even). 
But first we begin with a simpler setting, i.e.\ integer compositions such that the first part is odd (resp.\ even). 
Let $C_o(n)$ denote the number of integer compositions such that the first part is odd and $C_e(n)$ denote the number of compositions where the first part is even. 
\begin{theorem}
\label{theo:ConminusCenandTriangle}
Let $C_o(n)$ denote the number of integer compositions such that the first part is odd and $C_e(n)$ denote the number of compositions where the first part is even.  Let $G(n)$ denote the number of closed walks on a triangle (starting from a chosen vertex). Then:
\begin{align}
    \label{eq:CominusCeisGn1}
  C_o(n)- C_e(n) &=   G(n-1).
\end{align}
\end{theorem}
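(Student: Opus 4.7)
The plan is to prove this via generating functions, computing both sides independently and observing that they agree.

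For the left-hand side, I would apply the standard first-part decomposition of a composition: every nonempty composition is a positive first part followed by a (possibly empty) composition of the remainder. Using the generating functions $\frac{z}{1-z^2}$ for positive odd integers, $\frac{z^2}{1-z^2}$ for positive even integers, and $\frac{1-z}{1-2z}$ for (possibly empty) compositions, I obtain
$$C_o(z) = \frac{z}{(1+z)(1-2z)}, \qquad C_e(z) = \frac{z^2}{(1+z)(1-2z)},$$
and hence $C_o(z) - C_e(z) = \frac{z(1-z)}{(1+z)(1-2z)}$.

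For the right-hand side, I would diagonalise the adjacency matrix of $K_3$, namely $J - I$, whose eigenvalues are $2$ (once) and $-1$ (twice). By vertex-transitivity $G(n) = \frac{1}{3}\operatorname{tr}((J-I)^n) = \frac{2^n + 2(-1)^n}{3}$, and a partial-fraction combination gives
$$\sum_{n \geq 0} G(n) z^n = \frac{1}{3}\left(\frac{1}{1-2z} + \frac{2}{1+z}\right) = \frac{1-z}{(1-2z)(1+z)}.$$
Multiplying by $z$ to realise the shift $n \mapsto n-1$ reproduces $C_o(z) - C_e(z)$ exactly, which proves~\eqref{eq:CominusCeisGn1}. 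As an equivalent, more hands-on route, decomposing by the first part yields $a_n = 2^{n-2} - a_{n-1}$ for $n \geq 2$, where $a_n := C_o(n) - C_e(n)$, while the eigenvalue relation $G(n) = G(n-1) + 2G(n-2)$ implies $b_n := G(n-1)$ satisfies the same recurrence $b_n + b_{n-1} = 2^{n-2}$; the initial value $a_1 = 1 = G(0)$ closes the induction.

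The main obstacle, given the bijective spirit of the paper, would be to find a genuinely combinatorial proof. The identity rewrites as $C_o(n) = C_e(n) + 2 C_e(n-1)$ for $n \geq 2$, which strongly suggests an explicit injection or sign-reversing involution on compositions of $n$ whose unpaired objects correspond to the $2 C_e(n-1)$ closed walks---perhaps via a three-colouring of positions tied to the vertices of the triangle, or by encoding compositions as $\{0,1\}$-sequences of length $n-1$ and matching the position of the first $1$ with the residue statistic that governs closed walks on $K_3$. Constructing such a bijection explicitly is the step I would expect to require the most care, and is strictly harder than the generating-function computation above.
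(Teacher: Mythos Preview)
Your proof is correct and is essentially the paper's Proof~1: both compute $C_o(z)-C_e(z)=\frac{z(1-z)}{(1+z)(1-2z)}$ via the first-part decomposition, then obtain $G(n)=\frac{2^n+2(-1)^n}{3}$ from the eigenvalues $2,-1,-1$ of the triangle's adjacency matrix, and match after the shift. The paper also supplies the bijective proof you anticipated: it first shows $C_o(n)=C_e(n)+2C_e(n-1)$ (exactly the rewrite you noted), then proves $2C_e(n)=G(n)$ by labelling the edges of the triangle so that closed walks from a fixed vertex correspond to words over $\{1,2,\bar 2\}$ summing to $n$, which in turn biject with compositions whose first part is even.
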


\begin{proof}[Proof 1 (Generating function and transfer matrix)]
We start with the generating function of the number of compositions such that the first part is odd. 
%
The number of compositions of $n$ into $k$ parts such that the first part is $(2r-1)$ is given by the coefficient of $x^n$ in $x^{2r-1}(x+x^2+x^3+\dots)^{k-1}$. 
Therefore the number of compositions of $n$ into $k$ parts such that the first part is odd is given by $\frac{x}{1-x^2}\frac{x^{k-1}}{1-x^{k-1}}$.
So 
we sum over $k$ to find 
\begin{align*}
\sum_{n=0}^{\infty}C_o(n)x^n  & = \sum_{k=1}^{\infty}\frac{x}{1-x^2}{\frac{x^{k-1}}{1-x^{k-1}}} 
= \frac{x}{1-x-2x^2}.
\end{align*}

Similarly we can find the number of compositions of $n$ such that the first part is even to be the coefficient of $x^n$ in the expansion of $\frac{x^2}{1-x^2}\frac{1-x}{1-2x}= \frac{x^2}{1-x-2x^2}$. 
Therefore, we see that 
\begin{align*}
\sum_{n=0}^{\infty}(C_o(n)- C_e(n))x^n 
= \frac{x(1-x)}{1-x-2x^2}.
\end{align*}
Decomposing into partial fractions and simplifying we can see that the difference between the number of integer compositions with first part respectively odd and even is given by $C_o(n)- C_e(n) = (2^{n-1} + 2 \, (-1)^{n-1})/3$.

Now the adjacency matrix for the triangle is simply 
$A = \left(\begin{smallmatrix}
0 & 1 & 1 \\
1 & 0 & 1 \\
1 & 1 & 0
\end{smallmatrix}\right)$.
If we consider the $n$th power of $A$, the entry in the $i$th row and $j$th column  gives the number of walks of length $n$ from vertex $i$ to vertex $j$. 
Thus, the number of closed walks are the diagonal entries of $A^n$.
Due to symmetry and the regular degree of each vertex, all diagonal entries are equal.

The eigenvalues of matrix $A$ are $-1$ (with multiplicity $2$) and $2$ (with the multiplicity $1$). 
Therefore, the number of closed walks of length~$n$ starting from any of the three vertices and ending at the same vertex is $2^n + 2 \, (-1)^n $.
Dividing by $3$, we get the number of closed walks starting from a definite vertex and ending there which proves our theorem. 
\end{proof}

\begin{proof}[Proof 2 (Bijection)]
First we show that $C_o(n) = C_e(n) + 2C_e(n-1)$. 
Let a composition of $n$ with odd first part $n_1$ be given.
If $n_1>1$ then subtracting $1$ from the first part, gives a composition of $n-1$ with even first part enumerated by $C_e(n-1)$. 
If $n_1=1$ then removing $n_1$ gives an unconstrained composition of $n-1$ that is enumerated by $C_o(n-1)+C_e(n-1)$. 
Therefore, $C_o(n) = C_o(n-1)+2C_e(n-1)$. 
Finally, note that $C_o(n-1)=C_e(n)$ as adding $1$ to the first part is a bijection.
Thus, Claim~\eqref{eq:CominusCeisGn1} is equivalent to 
\begin{align*}
    2 C_e(n) = G(n),
\end{align*} 
which we will prove now bijectively.

In the triangle, we fix a root vertex and label it by $1$ and the other vertices by $2$ and $3$.
This allows us to interpret the factor $2$ as a fixed first step from vertex $1$ to $2$, and it remains to show that these closed walks are enumerated by $C_e(n)$.
For this purpose, we assign to each edge $(a,b)$, connecting vertices $a$ and $b$, a label $\ell(a,b)$ as follows: 
\begin{align*}
    \ell (1,2)  = 2, 
    \qquad
    \ell(1,3) = \bar{2}, 
    \qquad
    \ell(2,3) = \ell(3,2) = 1,
    \qquad
    \ell(2,1) = \ell(3,1) = \varepsilon,
\end{align*}
where $\varepsilon$ denotes an empty label. 
The key observation is that each closed path from~$1$ to~$1$ of length $n$ is in bijection with a word $\{1,2,\bar{2}\}^n$, by reading the edge labels along its traversal.
Observe that in each such traversal, each step $(1,2)$ (resp., $(1,3)$) needs to be followed by a $(2,1)$ or $(3,1)$ step, because the path is closed. 
Therefore, the sum of the labels in the word is equal to the length of the path.

Finally, each such word is in bijection with a composition of $n$ with even first part. 
The idea is similar to Proposition~\ref{prop:threecompbijection}: 
We go through the word from left to right.
Each time we read a $\bar{2}$ we remove it and increase the left neighbor by $2$. 
After all parts $\bar{2}$ have been removed, what remains is a composition with even first part, because the path started with a step from~$1$ to~$2$. 

For the reverse bijection, replace each even part $2m>2$ by the word $2\bar{2}\dots\bar{2}$ containing $m-1$ letters $\bar{2}$, and each odd part $2m+1>2$ by $1\bar{2}\dots\bar{2}$ consisting of $m$ letters $\bar{2}$. 
In other words, the sum of parts ($\bar{2}$ is treated like $2$) gives $2m$ (resp., $2m+1$).
By the above mapping, these words then correspond to a unique path in the triangle.
\end{proof}

\begin{remark}
The previous bijection is a combinatorial interpretation of the underlying recurrence relation (proved above)
\begin{align}
    \label{eq:CeFibonaccilike}
    C_e(n) = C_e(n-1) + 2 \, C_e(n-2),
\end{align}
for $n \geq 2$, where $C_e(1)=0$ and $C_e(2)=1$.
As $C_o(n) = C_e(n-1)$, the same recurrence with shifted initial conditions $C_o(1)=1$ and $C_o(2)=1$ holds for compositions starting with an odd part.

More generally, let $m>0$ be a positive integer. 
Then the same combinatorial construction works for compositions $(n_1,n_2,\dots,n_k)$ such that $n_1 \equiv i \mod (m)$ for a fixed $i \in \{0,1,\dots,m-1\}$. 
In particular, let $C_i(n)$ be the number of such compositions of $n$. 
Then, it is easy to show
\begin{align*}
    C_i(n) = C_i(n-1) + \dots + C_i(n-m+2) + 2 \, C_i(n-m+1),
\end{align*}
for $n \geq m$ and suitable initial conditions.
Then, these are in bijection with words in the alphabet $\{1,2,\dots,m-1,m,\bar{m}\}$ such that the sum of letters is equal to $n$ ($\bar{m}$ is treated like $m$).
Then, one can also associate to these compositions walks in a (multi-)graph with the labels from the alphabet.
However, it seems to be only ``nice'' for $m=2$, i.e., the triangular case.
\end{remark}

\begin{remark}
    A third proof idea of Theorem~\ref{theo:ConminusCenandTriangle} builds on $C_o(n) = C_e(n-1)$ combined with the observation that $C_o(n) + C_e(n) = 2^{n-1}$, as every composition has either an odd or even first part.
    Thus, one gets the recurrence $C_e(n)+C_e(n-1) = 2^{n-1}$, which can be solved using standard methods. 
    
    Note that this approach also works for $C_i(n)$ for any modulus $m$, as $C_i(n) = C_{i+1}(n+1)$ for $i \in \{1,2,\dots,m-1\}$, and obviously $C_0(n) + \dots + C_{m-1}(n) = 2^{n-1}$.
\end{remark}

\medskip

In a similar manner we will study the influence of the first steps on the behavior of lattice paths. 
We will mention one specific intriguing result from recent times due to Andrei Asinowski regarding Dyck paths where the first peak is the highest; see \oeis{A287709}.

\begin{theorem}[{\!\!\cite{Asinowski2024Private}}]
    \label{theo:Asinowski}
    The number of Dyck paths of semilength $(n-1)$ whose peak of maximum height is attained by the initial ascent is equal to the number of Dyck paths of semilength $n$ such that every peak at height $h > 1$ is preceded by at least one peak of height $h-1$.
\end{theorem}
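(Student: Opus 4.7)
The plan is to rephrase the claim via a reduction and then verify a generating-function identity. First, I observe that any Dyck path counted by the right-hand side must begin with $\uu\dd$: otherwise its first peak would lie at some height $h > 1$ with no earlier peak at height $h-1$ to precede it, contradicting the hypothesis. Peeling off this mandatory initial hill establishes a bijection between the right-hand side and the family $\widetilde{B}_{n-1}$ of Dyck paths of semilength $n-1$ in which every peak at height $h > 2$ is preceded by a peak at height $h-1$. It therefore suffices to show $|A_m| = |\widetilde{B}_m|$ for all $m \ge 0$, where $A_m$ denotes Dyck paths of semilength $m$ whose first peak attains the maximum peak height.

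I would next compute the generating function of $A_m$. Every such path with maximum peak height $H$ decomposes uniquely as $\uu^H \dd X_H$, where $X_H$ is a walk from height $H-1$ down to $0$ that stays weakly positive and has all peaks of height at most $H$. The layered-arches decomposition of $X_H$, combined with the Chebyshev-like rational functions $F_k(z) = U_{k-1}(z)/U_k(z)$ enumerating Dyck paths with peaks at most $k$ (with $U_0 = 1$, $U_1 = 1-z$, and $U_k = U_{k-1} - z\,U_{k-2}$), produces the clean telescoping product $F_1 F_2 \cdots F_H = 1/U_H$, so that
\[
    A(z) \;=\; 1 + \sum_{H \ge 1} \frac{z^H}{U_H(z)}.
\]

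For $\widetilde{B}_m$ I would set up an analogous ``first occurrence'' decomposition: a $\widetilde{B}$-path with maximum peak $H \ge 3$ splits into a prefix from $0$ to height $2$ with peaks $\le 2$ containing at least one peak at $2$, a first peak at $3$, for $k = 3, \dots, H-1$ a segment from height $k-1$ to $k$ with peaks $\le k$ followed by a first peak at $k+1$, and finally a suffix from height $H-1$ down to $0$ with peaks $\le H$. Each piece is again expressible in terms of $F_k$ and $U_k$, and summing over $H$ one obtains an expression that, by the Chebyshev recurrence $U_k = U_{k-1} - z\,U_{k-2}$, collapses to exactly the formula above for $A(z)$.

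The hardest step is this final identity $\widetilde{B}(z) = A(z)$: the two expansions are indexed by $H$ in structurally different ways, and their equality only surfaces after Chebyshev-type telescoping. A more satisfying route would be an explicit bijection $A_m \leftrightarrow \widetilde{B}_m$, but this appears subtle --- the naive ``prepend $\uu\dd$'' map only handles $A$-paths of maximum peak at most $2$, and in small cases the required correction involves a position swap inside the initial tower whose precise rule varies with the fine structure of the path. Finding a uniform clean bijection therefore seems to be an intriguing combinatorial challenge suggested by Asinowski's identity.
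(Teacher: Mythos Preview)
The paper does not prove Theorem~\ref{theo:Asinowski} at all: it is quoted from Asinowski (private communication), and a few lines later the authors write explicitly that ``Theorem~\ref{theo:Asinowski} has so far no bijective explanation and the problem seems rather difficult.'' There is therefore no proof in the paper to compare your attempt against; what follows is an assessment of your outline on its own.

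Your opening reduction is correct: any path counted by the right-hand side must begin with a hill, and stripping it off gives exactly the class $\widetilde{B}_{n-1}$ you describe. Your derivation of
\[
    A(z) \;=\; 1 + \sum_{H\ge 1}\frac{z^{H}}{U_{H}(z)}
\]
via the last-passage decomposition $\uu^{H}\dd\,G_{H-1}\dd\cdots\dd G_{0}$ and the telescoping product $F_{1}\cdots F_{H}=1/U_{H}$ is also clean and checks numerically against \oeis{A287709}.

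The genuine gap is the step you yourself flag as ``the hardest'': the parallel computation of $\widetilde{B}(z)$. Your decomposition of a $\widetilde{B}$-path by the successive \emph{first} peaks at heights $3,4,\dots,H$ is structurally sound, but the pieces are more delicate than in the $A$-case. The prefix is a walk from $0$ to $2$ in the strip $[0,2]$ \emph{with at least one peak at height~$2$}; each intermediate segment is a walk from $k-1$ to $k$ in the strip $[0,k]$, which can dip all the way back to the $x$-axis and hence is not a simple product of the $F_j$'s. Expressing these strip walks in terms of $U_k$, multiplying them together, and then showing that the resulting sum over $H$ collapses to $1+\sum_{H}z^{H}/U_{H}$ is precisely the content of the theorem, and you have asserted it rather than carried it out. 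Until that algebra (or an alternative argument) is actually written down, the proof is incomplete.

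Your closing remark --- that a direct bijection $A_m\leftrightarrow \widetilde{B}_m$ appears subtle and that the naive ``prepend a hill'' map breaks down beyond height~$2$ --- is exactly in line with the paper's own comment that no bijective proof is known.
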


The analogous question is easily solved for Dyck bridges.

\begin{lemma}
    \label{lem:DyckbridgeFirstPeak}
    The number of Dyck bridges  such that the first peak is the highest is enumerated by the Catalan numbers. 
\end{lemma}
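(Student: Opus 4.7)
The plan is to establish the lemma by constructing an explicit bijection between these bridges and Dyck paths of semi-length $n$, which are enumerated by the Catalan number $C_n$. Here I interpret ``the first peak is the highest'' in the strict sense, i.e.\ every other peak is strictly lower than the first one (this is what the small cases $n=1,2,3$ single out).

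First I would analyze the structure of such a bridge $B$. Let $k \ge 1$ be the height of its first peak. Then $B$ must begin with an initial ascent $\uu^k$ followed by a $\dd$, so I can write $B = \uu^k \dd R$ where $R$ is a lattice path of length $L = 2n - k - 1$ from height $k-1$ to height $0$. The hypothesis that the first peak is strictly higher than every later peak forces $R$ to stay weakly below height $k-1$: otherwise $R$ would reach altitude at least $k$ and, since it must return to $0$, would attain a local maximum of height at least $k$, contradicting strict maximality. The same argument shows that the first step of $R$ is a $\dd$.

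Next I would define $\Phi(B) := \uu^k \dd R^{\mathrm{rev}}$, where $R^{\mathrm{rev}}$ is the step sequence of $R$ read backwards. A direct telescoping computation gives the altitude identity $R^{\mathrm{rev}}(j) = (k-1) - R(L - j)$, so $R^{\mathrm{rev}}$ runs from $k-1$ to $0$ and stays weakly above $0$ precisely because $R$ stayed weakly below $k-1$. Hence $\Phi(B)$ has length $2n$, ends on the $x$-axis, and is non-negative: it is a Dyck path of semi-length $n$ whose initial ascent has length exactly $k$.

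For the inverse, given a Dyck path $D$ of semi-length $n$ with initial ascent of length $k$, I would write $D = \uu^k \dd D'$ and set $\Phi^{-1}(D) := \uu^k \dd (D')^{\mathrm{rev}}$. Because every Dyck path ends with $\dd$, $D'$ ends with $\dd$, hence $(D')^{\mathrm{rev}}$ begins with $\dd$; and the same altitude identity shows that $(D')^{\mathrm{rev}}$ stays weakly below $k-1$. Thus $\Phi^{-1}(D)$ is a bridge whose first peak, at height $k$, is strictly the highest, so $\Phi$ is a bijection and the count is $C_n$. The main subtlety I expect is the altitude identity translating the ``upper barrier $\leq k-1$'' constraint on $R$ into the ``lower barrier $\geq 0$'' constraint on $R^{\mathrm{rev}}$; once this is in place, all other verifications (initial ascent length, endpoints, the ``first peak is strictly highest'' condition on the preimage side) are immediate bookkeeping.
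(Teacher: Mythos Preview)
Your argument is correct and gives a clean explicit bijection: fix the initial ascent $\uu^k\dd$ and reverse the order of the remaining $2n-k-1$ steps. The altitude identity $R^{\mathrm{rev}}(j)=(k-1)-R(L-j)$ indeed exchanges the ceiling constraint $R\le k-1$ for the floor constraint $R^{\mathrm{rev}}\ge 0$, and both directions preserve the length $k$ of the initial ascent, so $\Phi$ is a genuine bijection between the desired bridges and Dyck paths of semi-length $n$. One small clarification: the sentence ``then $B$ must begin with an initial ascent $\uu^k$'' is not a consequence of the first peak being highest---a bridge like $\dd\uu\uu\dd$ also has its sole peak strictly highest but does not start with $\uu$. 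The class that actually matches the Catalan count (and the one implicit in the paper) consists of bridges that begin with $\uu$, so that the first peak sits atop the initial ascent; your bijection works precisely for this class.

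The paper takes a different route. It performs a last-passage decomposition after the first peak: marking the last visit to each height $k,k-1,\dots,0$ writes the bridge as $\uu^k$ followed by $\dd E_k\,\dd E_{k-1}\cdots\dd E_1$ with each $E_i$ a reflected Dyck path, giving the generating-function identity $\sum_{k\ge0} z^{2k}D(z)^k = \frac{1}{1-z^2D(z)} = D(z)$ via the Catalan equation $D=1+z^2D^2$. So the paper's argument is generating-function based, while yours is a direct bijection. Your approach is more elementary and makes the correspondence completely explicit (simply reverse the tail); the paper's approach, on the other hand, fits seamlessly with the first/last-passage machinery used throughout Section~\ref{sec:PeaksAndPaths} and naturally suggests the rearrangement of the pieces $E_i$ into a sequence of arches alluded to in the remark following the lemma.
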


\begin{proof}
We start by decomposing a Dyck bridge in the following fashion; see Figure~\ref{fig:DyckLastHeight}:
We mark the points where the path reaches a particular height for the last time. 
Then one down step from each of those points to the next marked point forms a (possibly empty) negative Dyck path, which  becomes a usual Dyck path after reflection. 
Because the first peak is the highest (of height $k$ say) the path may be decomposed as 
$B_k(z)$ = $z^{k+1}D(z)^kz^{k-1}$
= $z^{2k}D(z)^k$, where $D(z)$ is the generating function of Dyck paths/Catalan numbers. 
Therefore summing over $k$ we obtain all possible Dyck bridges where the first peak is the highest one:
\begin{align*}
\sum_{k=0}^{\infty} z^{2k}D(z)^k 
  = \frac{1}{1 - z^2D(z)}  = D(z),
\end{align*}
since $D(z) = 1 + z^2D(z)^2$.
\end{proof}

\begin{remark}
As demonstrated repeatedly in Section~\ref{sec:PeaksAndPaths}, we can rearrange and flip the decomposition in Lemma~\ref{lem:DyckbridgeFirstPeak} into a sequence of arches giving a classical Dyck path in a bijective fashion.     
However, Theorem~\ref{theo:Asinowski} has so far no bijective explanation and the problem seems rather difficult.
\end{remark}

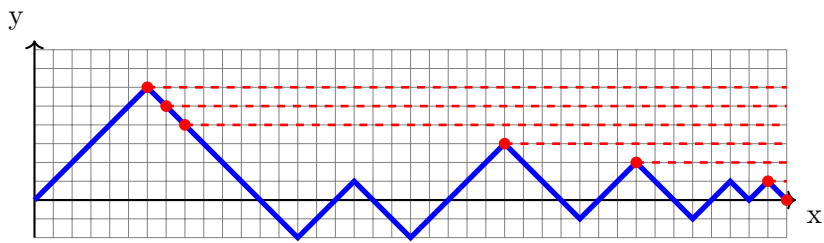
\begin{figure}[t]
    \centering
    \begin{tikzpicture}[scale=0.25]
        \draw[help lines] (0,-2) grid (40,8);
        \draw[thick,->] (0,0) -- (40.5,0) node[anchor=north west] {x};
        \draw[thick,->] (0,-2) -- (0,8.5) node[anchor=south east] {y};
    
        \draw[line width=2pt,blue] (0,0) -- (1,1) -- (2,2) -- (3,3) -- (4,4) -- (5,5) -- (6,6) -- (7,5) -- (8,4) -- (9,3)
            -- (10,2) -- (11,1) -- (12,0) -- (13,-1) -- (14,-2) -- (15,-1) -- (16,0) -- (17,1) -- (18,0) -- (19,-1)
            -- (20,-2) -- (21,-1) -- (22,0) -- (23,1) -- (24,2) -- (25,3) -- (26,2) -- (27,1) -- (28,0)
            -- (29,-1) -- (30,0) -- (31,1) -- (32,2) -- (33,1) -- (34,0) -- (35,-1) -- (36,0) -- (37,1)
            -- (38,0) -- (39,1) -- (40,0);

        \draw[line width=1pt, dashed, red] (6,6) -- (40,6); 
        \draw[line width=1pt, dashed, red] (7,5) -- (40,5); 
        \draw[line width=1pt, dashed, red] (8,4) -- (40,4); 
        \draw[line width=1pt, dashed, red] (25,3) -- (40,3); 
        \draw[line width=1pt, dashed, red] (32,2) -- (40,2); 
        \draw[line width=1pt, dashed, red] (39,1) -- (40,1); 
        
        \filldraw[red] (6,6) circle (8pt); 
        \filldraw[red] (7,5) circle (8pt); 
        \filldraw[red] (8,4) circle (8pt);
        \filldraw[red] (25,3) circle (8pt); 
        \filldraw[red] (32,2) circle (8pt);
        \filldraw[red] (39,1) circle (8pt);
        \filldraw[red] (40,0) circle (8pt);
    
    \end{tikzpicture}
    \caption{Last-passage decomposition of a Dyck bridge used in Lemmas~\ref{lem:DyckbridgeFirstPeak} and \ref{lem:Dyckbridgeirreduciblepairs}}
    \label{fig:DyckLastHeight}
\end{figure}

Modifying Asinowski's theorem for the case of Dyck bridges we get back a surprising connection to pairs of compositions again.
Bender, Lawler, Pemantle and Wilf~\cite{bender2003irreducible} define irreducible ordered pairs of compositions as follows: 
Let $n = b_1 + \dots + b_k = b'_1 + \dots + b'_k$ be a pair of compositions of $n$ into $k$ positive parts. This pair is called \emph{irreducible} if there is no positive $j < k$ for which $b_1 + \dots + b_j = b'_1 + \dots + b'_j$. 
Note that Callan~\cite{callan2012identity} showed that these are also in bijection with hill-free Dyck bridges.
We will now present a bijection with a different subfamily of Dyck bridges.

\begin{lemma}
    \label{lem:Dyckbridgeirreduciblepairs}
  The number of Dyck bridges with semi-length $n$, featuring a single highest peak and initiating with a hill, where each peak of height $h>1$ is preceded by a peak of height $h-1$, corresponds precisely to the number of irreducible ordered pairs of compositions of $n$.
\end{lemma}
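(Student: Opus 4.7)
The plan is to construct an explicit bijection between $\mathcal{B}_n$ and irreducible ordered pairs of compositions of $n$. The strategy would be to compose a direct bijection from $\mathcal{B}_n$ to hill-free Dyck bridges of semi-length $n-1$ with the bijection of Callan~\cite{callan2012identity} (mentioned in the paragraph preceding the lemma) between hill-free Dyck bridges of semi-length $n-1$ and irreducible ordered pairs of compositions of $n$. Thus the task reduces to constructing the first of these two maps.

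To build the direct bijection, I would decompose a bridge $B \in \mathcal{B}_n$ into its positive part $P$, namely the concatenation of positive arches forming a Dyck path, together with the negative arches distributed in the slots between consecutive positive arches. The conditions on $B$ would restrict $P$ to a Dyck path that starts with a hill, has a single highest peak, and satisfies the Asinowski condition that each peak of height $h > 1$ is preceded by a peak of height $h-1$; the negative arches remain unconstrained. I would then transform $P$ into a hill-free Dyck path of semi-length one less by removing its initial hill and, if further hills remain inside $P$, absorbing each into its uniquely-paired higher arch via local reorderings of the ground-level $\dd\uu$ pattern, in the spirit of Asinowski's bijection underlying Theorem~\ref{theo:Asinowski}. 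The single-highest-peak condition would supply the canonical pairing of internal hills with their receptor arches, while the Asinowski condition on $P$ would ensure each absorption is well-defined and reversible. Reassembling the transformed Dyck path with the untouched negative arches in their original slots would yield the desired hill-free Dyck bridge of semi-length $n-1$.

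The main obstacle will be the rigorous definition and invertibility of the hill-absorption step, especially when the positive part $P$ contains several internal hills: one must specify a well-ordered procedure that identifies, at each stage, the correct pair of hill and receptor arch to merge, and verify that the resulting bridge neither inadvertently produces a second peak at the maximum height nor fails to reverse uniquely. A cleaner but non-bijective alternative would verify the equality $|\mathcal{B}_n| = I_n$ via generating functions: one writes $\mathcal{B}(z) = A(z, E(z))$, where $A(z, t)$ is the bivariate generating function of the class of Dyck paths characterizing positive parts of $\mathcal{B}$ (with $t$ tracking the number of arches) and $E(z)$ is the Catalan generating function (accounting for the negative-arch sequences in the slots), and then shows that this coincides with the known generating function $I(z) = z / (\sqrt{1-4z} + z)$ of irreducible pairs, via a recursion on the height of the unique peak combined with Asinowski's identity from Theorem~\ref{theo:Asinowski}.
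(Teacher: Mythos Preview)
Your proposal is a strategic outline rather than a proof, and its central bijective route rests on a step that is not available. You invoke ``Asinowski's bijection underlying Theorem~\ref{theo:Asinowski}'' to absorb the internal hills of the positive part, but the paper explicitly remarks (just after Lemma~\ref{lem:DyckbridgeFirstPeak}) that Theorem~\ref{theo:Asinowski} has no known bijective explanation and that the problem seems difficult. So the very mechanism you plan to borrow does not exist in the literature. There is also a structural mismatch you glossed over: any hill-absorption that merges a hill with a neighbouring arch \emph{reduces the number of positive arches}, so the phrase ``reassembling the transformed Dyck path with the untouched negative arches in their original slots'' is ill-defined --- after the transformation the slot structure has changed, and you have not specified where the surplus negative blocks go or how to recover them in the inverse.

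Your generating-function alternative is closer in spirit to what actually works, and your observation that the sequence of negative arches in a slot has generating function $D(z)$ (via $D=1/(1-zD)$) is correct. But you still leave the crux undone: computing the bivariate series $A(z,t)$ for positive parts that start with a hill, have a unique highest peak, \emph{and} satisfy the peak-precedence condition. Appealing to Theorem~\ref{theo:Asinowski} does not help here, since that identity neither tracks the number of arches nor involves the single-highest-peak restriction. The paper bypasses all of this by decomposing directly at the unique highest peak of height~$k$: the left piece (origin to apex) is handled by marking, for each height~$1\le i\le k$, the last crossing, which --- thanks to the initial hill and the precedence condition --- produces $k$ up steps interleaved with $k-1$ \emph{non-empty} excursion factors, giving $z^k(D(z)-1)^{k-1}$; the right piece (apex to end) is a standard last-passage chain contributing $z^k D(z)^k$. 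Summing over $k$ yields
\[
\sum_{k\ge1} z^{2k}(D(z)-1)^{k-1}D(z)^k=\frac{z^2}{z^2+\sqrt{1-4z^2}},
\]
which matches the Bender--Lawler--Pemantle--Wilf generating function for irreducible pairs. This argument uses neither Asinowski's theorem nor any hill-absorption, and it is the missing idea in your sketch.
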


\begin{proof}
 We split the path into two parts such that the first part starts from the origin and continues up to the vertex of the highest peak. This is the point where the second part starts and ends when the path finally ends on the $x$-axis. 

 The first part is decomposed such that we mark the last time the path crosses each successive height. So in effect we have a chain beginning with an up step followed by a non-empty Dyck path followed by an up step and a non-empty Dyck path and so on, up to the highest peak. Assuming that the highest peak has height $k$, the path decomposes as $z^k(D(z)-1)^{k-1}$ (since the highest peak is just $\uu\dd$).

 The second part we decompose as we have done just in the previous lemma, such that we mark the point when path reaches a particular height for the last time. Then one down step from each of those points to the next marked point forms a (possibly empty) negative Dyck path and so the path decomposes into $z^kD(z)^k$. 

 So gluing them together gives us the entire path and to find all of these paths we sum over $k$ which gives us the generating function of all such paths as 
\begin{align*}
\sum_{k=1}^{\infty} z^{2k}(D(z)-1)^{k-1}D(z)^k 
  = \frac{z^2D(z)}{1 - z^2(D(z)-1)D(z)} 
  = \frac{z^2}{z^2 + \sqrt{1 - 4z^2}}.
\end{align*}
By~\cite[Theorem~1]{bender2003irreducible}, $\frac{z}{z + \sqrt{1 - 4z}}$ is the generating function of the number of irreducible ordered pairs of compositions of $n$, and the claim follows.
\end{proof}

\begin{remark}
We may also ask the opposite question: 
What are the number of Dyck bridges such that the first peak is of the smallest height?    
We can prove that such paths are enumerated by the partial sums of Catalan numbers. We leave it as an open problem to explore whether there is any connections to these numbers and integer compositions in some paradigm. 
\end{remark}

\section*{Acknowledgements}

This research was supported by the Austrian Science Fund (FWF): P~34142.


\bibliographystyle{mybiburl}
\bibliography{bibliography}    
  
\end{document}